\newtheorem{theorem}{Theorem}[section]
\newtheorem*{theorem*}{Theorem}
\newtheorem{lemma}[theorem]{Lemma}
\newtheorem{proposition}[theorem]{Proposition}
\newtheorem{remark}[theorem]{Remark}
\newtheorem{definition}[theorem]{Definition}
\numberwithin{equation}{section}
\newcommand{\normsymb}{\|}
\newcommand{\norm}[2]{\normsymb{#1}\normsymb_{#2}}  
\renewcommand{\epsilon}{\varepsilon}
\newcommand{\RR}{\mathbb{R}}
\newcommand*{\CC}{\mathbb{C}}
\newcommand*{\NN}{\mathbb{N}}
\newcommand{\dvm}{\,{\rm dvol}_M}
\newcommand{\dvpm}{\,{\rm dvol}_{\partial M}}
\newcommand{\pam}{{\partial M}}
\title{Magnetic Steklov operator on differential forms}
\author[1]{Tirumala Chakradhar\thanks{\texttt{tirumala.chakradhar@bristol.ac.uk}}}
\author[2]{Katie Gittins\thanks{\texttt{katie.gittins@durham.ac.uk}}}
\author[3,4]{Georges Habib\thanks{\texttt{ghabib@ul.edu.lb}}}
\author[2]{Norbert Peyerimhoff\thanks{\texttt{norbert.peyerimhoff@durham.ac.uk}}}
\affil[1]{\footnotesize School of Mathematics, University of Bristol, Fry Building, Woodland Road, BS8 1UG, UK}
\affil[2]{\footnotesize Department of Mathematical Sciences, Durham University, Mathematical Sciences and Computer Science Building, Upper Mountjoy Campus, Stockton Road, Durham University, DH1 3LE, United Kingdom}
\affil[3]{\footnotesize Lebanese University, Faculty of Sciences II, Department of Mathematics, P.O. Box 90656 Fanar-Matn, Lebanon}
\affil[4]{\footnotesize Universit\'e de Lorraine, CNRS, IECL, 54506 Nancy, France}
\date{\today}
\begin{document}

\maketitle

\begin{abstract}
     In this paper, we introduce the magnetic Steklov operator on differential forms and show that the underlying boundary value problem is well-posed. Moreover, we show that an analogue of the Diamagnetic Inequality does not always hold for this operator, and we present some spectral computations of magnetic Steklov operators for $2$-dimensional and $4$-dimensional balls in Euclidean space.
\end{abstract}

\tableofcontents

\section{Introduction}

Let $(M^m,g)$ be a compact Riemannian manifold of dimension $m$ with smooth boundary $\partial M$. 
Fix a smooth, real $1$-form $\eta$ (called the magnetic potential) on $M$. 
Let $k\in \{0,\ldots,m\}$.
We denote by $\Omega^k(M,\mathbb{C})$ the space of complex differential $k$-forms on $M$, and we denote by $$\Delta^\eta=d^\eta\delta^\eta+\delta^\eta d^\eta$$ the magnetic Hodge Laplacian on such $k$-forms.
Here $d^\eta:=d+i\eta\wedge$ is the magnetic exterior differential on $M$ and $\delta^\eta=\delta-i\eta\lrcorner$ is the formal $L^2$ adjoint of $d^\eta$. A complex form $\omega$ is called {\it magnetic harmonic} if $\Delta^\eta\omega=0$. 

The Steklov eigenvalue problem has received a great deal of attention in recent years, both for functions and for differential forms. See, for example, \cite{CGGS:24} for a recent survey. 
Moreover, the magnetic Steklov eigenvalue problem for functions is a dynamic area of current research, see, for example, 
\cite{CS:24}, \cite{CPS:22}, 
\cite{DKSU:07}, \cite{EO:22}, \cite{H:18},
\cite{HN:24a}, \cite{HN:24b}, \cite{LT:23}, \cite{NSU:95}, \cite{PS:23}.

Motivated by our previous work on the spectrum of the magnetic Hodge Laplacian on differential forms \cite{EGHP:23}, and the spectrum of the magnetic Steklov operator on functions \cite{CGHP:25}, the goal of this paper is to introduce the magnetic Steklov operator on differential forms. This operator is the magnetic analogue of the Steklov operator on differential forms that was studied by Raulot and Savo in \cite{RS:12}.

The first aim of this paper is to show that the spectral problem associated with the magnetic Steklov operator on differential forms is well posed. To that end, in Section \ref{sec:wellposednesssteklovforms}, we prove the following result:

\begin{theorem}\label{thm:magneticboundprob}
Let $(M^m,g)$ be a compact Riemannian manifold of dimension $m$ with smooth boundary $\partial M$ and let $\eta$ be a smooth, real differential $1$-form on $M$. The boundary value problem 
\begin{equation}\label{eq:magneticsteklovsolut}
\left\{
\begin{matrix}
	\Delta^\eta\omega=\varphi & \text{on $M$,}\\\\
	\omega=\psi & \text{on $\partial M$,}
\end{matrix}\right.
\end{equation}    
has a unique smooth solution for each $\varphi\in \Omega^k(M,\mathbb{C})$ and $\psi\in \Omega^k(M,\mathbb{C})|_{\partial M}$.
\end{theorem}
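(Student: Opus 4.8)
The plan is to treat \eqref{eq:magneticsteklovsolut} as a Dirichlet problem for the second-order elliptic operator $\Delta^\eta$ and to establish existence, uniqueness, and regularity through the standard variational machinery, with the one genuinely non-formal input being a unique continuation argument. First I would reduce to homogeneous boundary data: choosing a smooth extension $\Psi\in\Omega^k(M,\mathbb{C})$ with $\Psi|_{\pam}=\psi$ and setting $\tilde\omega=\omega-\Psi$, the problem becomes $\Delta^\eta\tilde\omega=\varphi-\Delta^\eta\Psi$ on $M$ with $\tilde\omega|_{\pam}=0$, so it suffices to treat $\psi=0$. Note that $\Delta^\eta$ differs from the ordinary Hodge Laplacian only by zeroth-order (algebraic) terms, since $i\eta\wedge$ and $-i\eta\lrcorner$ carry no derivatives and are pointwise adjoints of one another; hence $\Delta^\eta$ is elliptic with scalar principal symbol $|\xi|^2$, and -- crucially -- integration by parts produces exactly the same boundary terms as in the non-magnetic case. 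In particular, for forms with vanishing full restriction to $\pam$ one obtains the Green identity $\langle\Delta^\eta\omega,\sigma\rangle_{L^2}=\langle d^\eta\omega,d^\eta\sigma\rangle_{L^2}+\langle\delta^\eta\omega,\delta^\eta\sigma\rangle_{L^2}$, because the surviving boundary integrand $\langle j^*\delta^\eta\omega,\nu\lrcorner\sigma\rangle-\langle j^*\sigma,\nu\lrcorner d^\eta\omega\rangle$ pairs against either the tangential part $j^*\sigma$ or the normal part $\nu\lrcorner\sigma$, both of which vanish.

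Next I would set up the weak formulation on the Hilbert space $H^1_0$ of $H^1$-forms with zero trace, using the sesquilinear form $Q(\omega,\sigma)=\langle d^\eta\omega,d^\eta\sigma\rangle_{L^2}+\langle\delta^\eta\omega,\delta^\eta\sigma\rangle_{L^2}$. The Green identity shows that $\omega\in H^1_0$ solving $Q(\omega,\sigma)=\langle\varphi,\sigma\rangle_{L^2}$ for all $\sigma\in H^1_0$ is precisely a weak solution of the Dirichlet problem. A Gaffney--Gårding inequality, valid here because the Dirichlet condition controls both the tangential and the normal boundary parts, yields $Q(\omega,\omega)+C\|\omega\|_{L^2}^2\ge c\|\omega\|_{H^1}^2$; together with Rellich compactness this makes $\Delta^\eta$ with Dirichlet conditions a self-adjoint operator with compact resolvent and discrete, non-negative spectrum.

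The decisive step is triviality of the kernel (equivalently $0\notin\operatorname{spec}$), which upgrades the above to genuine coercivity of $Q$ and hence lets Lax--Milgram produce the solution. If $\Delta^\eta\omega=0$ with $\omega|_{\pam}=0$, the Green identity gives $\|d^\eta\omega\|_{L^2}^2+\|\delta^\eta\omega\|_{L^2}^2=0$, so $d^\eta\omega=\delta^\eta\omega=0$. I would then show that the full Cauchy data vanishes: working in boundary normal coordinates and writing $\omega=\alpha+dr\wedge\beta$, the vanishing of $\omega$ on $\pam$ kills all tangential derivatives at $r=0$, and since the algebraic $\eta$-terms also vanish there (as $\omega=0$ on $\pam$), the equations $d^\eta\omega=0$ and $\delta^\eta\omega=0$ force $\partial_r\alpha=\partial_r\beta=0$ at $r=0$, that is $\nabla_\nu\omega|_{\pam}=0$. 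With vanishing Cauchy data and $\Delta^\eta\omega=0$ a second-order elliptic equation with scalar principal part, Aronszajn's unique continuation theorem gives $\omega\equiv0$ on each connected component of $M$ meeting $\pam$, and hence on $M$.

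The main obstacle is precisely this uniqueness/unique-continuation argument. Once it is in place, existence follows from Lax--Milgram (now that $Q$ is coercive), regularity follows from elliptic bootstrapping -- the Dirichlet condition satisfies the Lopatinski--Shapiro complementing condition, so smoothness of $\varphi$ and $\psi$ propagates to $\tilde\omega$ and hence to $\omega=\tilde\omega+\Psi$ -- and the reduction to $\psi=0$ is routine. Thus the only place where a non-formal geometric input enters is in ruling out nonzero magnetic harmonic fields with vanishing Dirichlet trace, which genuinely requires the Cauchy-data computation together with Aronszajn's theorem.
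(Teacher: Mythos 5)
Your proposal is correct in substance, and its existence half is essentially the paper's own argument: the paper likewise reduces to homogeneous boundary data by extending $\psi$, works with the form $\mathcal{D}^\eta(\omega,\omega)=\|d^\eta\omega\|^2_{L^2}+\|\delta^\eta\omega\|^2_{L^2}$ on forms with vanishing full trace, obtains coercivity from a magnetic Gaffney inequality (Lemma \ref{lem:gaffny}) together with triviality of the kernel, applies Lax--Milgram, and concludes smoothness from Lopatinskii--Shapiro ellipticity (the paper routes the intermediate $H^2$ step through an explicit difference-quotient argument, Proposition \ref{prop:strongsolution}, rather than citing Agmon--Douglis--Nirenberg theory wholesale, but that is packaging). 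The genuine divergence is in the uniqueness step, Theorem \ref{thm:dirichletmagnetic}, which both proofs identify as the crux. The paper, following Ann\'e \cite{Ann:89}, shows that $\omega$ vanishes to \emph{infinite} order at $\partial M$, proves the technical weighted Carleman-type inequality of Lemma \ref{lem:magneticineq} to force $\omega\equiv 0$ on an entire collar, and only then doubles the manifold --- so that the zero-extension is \emph{smooth} --- and invokes the unique continuation theorem \cite[Thm. 3.4.3]{Sch:95} for smooth forms satisfying $|d\omega|^2+|\delta\omega|^2=|\eta|^2|\omega|^2$. You instead stop at first-order vanishing of the Cauchy data and appeal to Aronszajn directly. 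That is a legitimate and shorter route, but the two points you gloss over are exactly where the paper pays its toll. First, Aronszajn-type theorems are interior statements (vanishing to infinite order at a point, or on an open set); to exploit Cauchy data on $\partial M$ you must extend $\omega$ by zero across the boundary into an enlarged manifold. Your Cauchy-data computation is precisely what makes that extension $C^{1,1}$, hence $H^2_{\rm loc}$, satisfying the differential inequality almost everywhere and vanishing on an open set; but you then need a unique continuation theorem valid for $H^2_{\rm loc}$ solutions of \emph{systems} with scalar principal symbol (Aronszajn--Krzywicki--Szarski), not the scalar $C^2$ statement. The paper's Steps 1--2 exist precisely to avoid this regularity issue: by securing vanishing on a collar first, it only ever applies unique continuation to a smooth form. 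So your approach buys brevity at the price of a stronger off-the-shelf input; the paper's buys a weaker black box at the price of Lemma \ref{lem:magneticineq}. (Note also that the kernel elements of your weak problem must first be shown regular up to the boundary --- via the same elliptic regularity you invoke later for existence --- before the collar computation and the $C^{1,1}$ extension make sense; the paper does this through Theorem \ref{thm:smoothcohomology}.)

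One imprecision is worth correcting: $\Delta^\eta$ does \emph{not} differ from $\Delta$ by zeroth-order terms. The cross terms between $d,\delta$ and $i\eta\wedge, -i\eta\lrcorner$ produce a genuine first-order operator, as is visible in the magnetic Weitzenb\"ock formula \eqref{eq:bochnermagneticphid}, which contains the term $-2i\nabla_\eta$. This does not damage your argument: what you actually use is that $d^\eta-d$ and $\delta^\eta-\delta$ are zeroth order (for the Green identity and the Gaffney inequality), that the principal symbol of $\Delta^\eta$ is $|\xi|^2$, and that the differential inequality fed to the unique continuation theorem tolerates first-order terms on the right-hand side. But the claim as stated is false and should be removed.
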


Using this result, it follows that for any differential form $\psi\in \Omega^k(\partial M,\mathbb{C})$, there is a unique $\hat\omega \in \Omega^k(M,\mathbb{C})$ such that 
\begin{equation}\label{eq:steklovmagnetic}
\left\{
\begin{matrix}
	\Delta^\eta\hat\omega=0 & \text{on $M$,}\\\\
	\iota^*\hat\omega=\psi,\,\, \nu\lrcorner\hat\omega=0 & \text{on $\partial M$.}
\end{matrix}\right.
\end{equation} 
We call $\hat\omega$ the $\eta$-harmonic extension of $\psi$.

The magnetic Steklov operator $T^{[k]}:\Omega^k(\partial M,\mathbb{C})\to \Omega^k(\partial M,\mathbb{C})$ on $k$-forms is then defined as:
$$T^{[k],\eta}\omega:=-\nu\lrcorner d^\eta\hat\omega.$$

In Section \ref{ss:diamagnetic}, we consider the spectral properties of the magnetic Steklov operator on differential forms.
We prove that it has an increasing sequence of eigenvalues 
        $$\sigma_{1,k}^\eta(M)\leq \sigma_{2,k}^\eta(M)\leq \ldots,$$
with finite multiplicities accumulating at $+\infty$.
Moreover, we show that an analogue of the Diamagnetic Inequality does not always hold for this operator. More precisely, we prove the following theorem.

\begin{theorem} \label{thm:eigtaylor}
  Let $(M^m,g)$ be a compact, oriented Riemannian manifold of dimension $m$ with smooth boundary and let $\eta$ be a magnetic potential. 
  Then, for any $t \in \RR$, we have,
  \begin{equation} \label{eq:eigvalcomp}
  \sigma^{t\eta}_{1,k}(M)\leq \sigma_{1,k}(M)+\frac{2t}{||\hat\omega||^2_{L^2(\partial M)}}{\rm Im}\left(\int_M\langle\mathcal{L}_\eta\hat\omega,\hat\omega\rangle {\rm dvol}_M\right)+\frac{||\hat\omega||^2_{L^2}||\eta||_\infty^2}{||\hat\omega||^2_{L^2(\partial M)}}t^2
  \end{equation}
  where $\hat\omega \in \Omega^k(M,\CC)$ is the harmonic extension of an eigenform $\omega$ of the Steklov operator $T^{[k]}$ (linearly extended to complex $k$-forms) associated with the eigenvalue $\sigma_{1,k}(M)$. In particular, if ${\rm Im} \left( \int_M \langle \mathcal{L}_{\eta} \hat\omega, \hat\omega \rangle {\rm dvol}_M\right)$ is negative for some complex eigenform $\hat\omega$, then, for small positive $t$, we get that
  $$ \sigma^{t\eta}_{1,k}(M)<\sigma_{1,k}(M).$$
\end{theorem}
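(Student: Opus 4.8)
The plan is to bound $\sigma^{t\eta}_{1,k}(M)$ from above by the magnetic Rayleigh quotient evaluated at a single well-chosen test form, and then to expand that quotient as a quadratic polynomial in $t$. First I would invoke the variational characterisation of the first magnetic Steklov eigenvalue established in Section \ref{ss:diamagnetic}: for every $t\in\RR$,
\[
\sigma^{t\eta}_{1,k}(M)=\min\frac{\|d^{t\eta}\alpha\|_{L^2(M)}^2+\|\delta^{t\eta}\alpha\|_{L^2(M)}^2}{\|\iota^*\alpha\|_{L^2(\partial M)}^2},
\]
the minimum being taken over all complex $k$-forms $\alpha$ with $\nu\lrcorner\alpha=0$ on $\partial M$ and $\iota^*\alpha\neq 0$. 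Since the (non-magnetic) harmonic extension $\hat\omega$ of the first eigenform $\omega$ satisfies $\nu\lrcorner\hat\omega=0$, it is an admissible test form, so $\sigma^{t\eta}_{1,k}(M)$ is at most the quotient evaluated at $\hat\omega$. At $t=0$ this quotient equals $\sigma_{1,k}(M)$ exactly, because $\hat\omega$ is a minimiser of the $\eta=0$ quotient.

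Next I would expand the numerator. Writing $d^{t\eta}\hat\omega=d\hat\omega+it\,\eta\wedge\hat\omega$ and $\delta^{t\eta}\hat\omega=\delta\hat\omega-it\,\eta\lrcorner\hat\omega$ and using that the Hermitian inner product is conjugate-linear in the second slot, the numerator splits into three parts according to powers of $t$. The constant term is $\|d\hat\omega\|^2+\|\delta\hat\omega\|^2=\sigma_{1,k}(M)\,\|\hat\omega\|_{L^2(\partial M)}^2$ by the previous paragraph. The linear term is $2t$ times the real part of $\int_M\bigl(\langle d\hat\omega,i\eta\wedge\hat\omega\rangle+\langle\delta\hat\omega,-i\eta\lrcorner\hat\omega\rangle\bigr)\dvm$; using the pointwise adjointness of $\eta\wedge$ and $\eta\lrcorner$ together with $\mathrm{Re}(-iz)=\mathrm{Im}(z)$, this rearranges into $2t\,\mathrm{Im}\int_M\langle\mathcal{L}_\eta\hat\omega,\hat\omega\rangle\dvm$. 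The quadratic term is $t^2\bigl(\|\eta\wedge\hat\omega\|^2+\|\eta\lrcorner\hat\omega\|^2\bigr)$, and by the pointwise identity $\eta\wedge(\eta\lrcorner\cdot)+\eta\lrcorner(\eta\wedge\cdot)=|\eta|^2\,\mathrm{Id}$ this equals $t^2\int_M|\eta|^2|\hat\omega|^2\dvm\le t^2\,\|\eta\|_\infty^2\,\|\hat\omega\|_{L^2}^2$.

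Dividing through by $\|\hat\omega\|_{L^2(\partial M)}^2=\|\iota^*\hat\omega\|_{L^2(\partial M)}^2$ (equal because $\hat\omega$ is tangential on $\partial M$) then yields \eqref{eq:eigvalcomp}. The final assertion is immediate: the right-hand side is a quadratic in $t$ whose value at $t=0$ is $\sigma_{1,k}(M)$ and whose first-order coefficient has the sign of $\mathrm{Im}\int_M\langle\mathcal{L}_\eta\hat\omega,\hat\omega\rangle\dvm$; if this quantity is negative, the quadratic drops strictly below $\sigma_{1,k}(M)$ for all sufficiently small $t>0$.

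I expect the main obstacle to be the precise identification of the $t$-linear term with $2\,\mathrm{Im}\int_M\langle\mathcal{L}_\eta\hat\omega,\hat\omega\rangle\dvm$: this requires careful bookkeeping of the Hermitian-conjugation conventions and, depending on how $\mathcal{L}_\eta$ is normalised, an integration by parts in which the emerging boundary contributions must be shown to be real (hence to drop out of the imaginary part) using $\nu\lrcorner\hat\omega=0$ and the harmonicity $\Delta\hat\omega=0$. The remaining two terms are comparatively routine once the correct inner-product conventions are fixed.
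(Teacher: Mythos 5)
Your proposal is correct and follows essentially the same route as the paper: test the min-max principle (Proposition \ref{prop:minmax}) with the non-magnetic harmonic extension $\hat\omega$ of the first Steklov eigenform, expand the magnetic Rayleigh quotient as a quadratic in $t$, identify the linear term via Cartan's formula, and bound the quadratic term by the pointwise identity $|\eta\wedge\hat\omega|^2+|\eta\lrcorner\hat\omega|^2=|\eta|^2|\hat\omega|^2$. The integration by parts you flag as the main obstacle is exactly what the paper does (Stokes applied to the $\delta$-term, converting $\langle\delta\hat\omega,i\eta\lrcorner\hat\omega\rangle$ into $\langle\hat\omega,i d(\eta\lrcorner\hat\omega)\rangle$), and it is slightly simpler than you anticipate: the boundary contribution vanishes identically because $\nu\lrcorner\hat\omega=0$, so no reality argument or use of harmonicity is needed at that step.
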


Furthermore, in the next theorem we provide an example where the condition in Theorem \ref{thm:eigtaylor} holds. This shows that the Diamagnetic Inequality is not always satisfied. 
\begin{theorem} \label{thm:balldiamineqcounter}
For $n\geq 1$, let $M=\mathbb{B}^{2n}$ be the unit ball in $\mathbb{R}^{2n}$ equipped with the metric $dr^2\oplus r^2g_{\mathbb{S}^{2n-1}}$ with $r=|x|$ where $x=(x_1,y_1,x_2, y_2,\ldots, x_n,y_n)\in \mathbb{R}^{2n}$. Let $\eta$ be the magnetic potential on the ball given by $\eta=\sum_{j=1}^n(-y_j\partial_{x_j}+x_j\partial_{y_j})$.  Then, for small $t>0$, we have 
$$ \sigma^{t\eta}_{1,1}(M)<\sigma_{1,1}(M).$$
\end{theorem}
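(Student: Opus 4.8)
The plan is to reduce the claim to the sufficient condition isolated in Theorem~\ref{thm:eigtaylor}: it suffices to produce a first Steklov eigenform whose harmonic extension $\hat\omega\in\Omega^1(M,\CC)$ satisfies $\mathrm{Im}\left(\int_M\langle\mathcal{L}_\eta\hat\omega,\hat\omega\rangle\,{\rm dvol}_M\right)<0$. The geometric input is that $dr^2\oplus r^2 g_{\mathbb{S}^{2n-1}}$ is the flat Euclidean metric, and that the dual vector field $\eta^\sharp=\sum_j(-y_j\partial_{x_j}+x_j\partial_{y_j})$ generates the isometric flow $\phi_s(z)=e^{is}z$ of $\mathbb{B}^{2n}=\{z\in\CC^n:|z|\le 1\}$, i.e.\ simultaneous rotation in each coordinate plane. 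Since $\mathcal{L}_\eta$ is the Lie derivative along $\eta^\sharp$, any eigenform of this flow, $\phi_s^*\hat\omega=e^{i\mu s}\hat\omega$ with $\mu\in\RR$, obeys $\mathcal{L}_\eta\hat\omega=i\mu\hat\omega$ pointwise, whence $\int_M\langle\mathcal{L}_\eta\hat\omega,\hat\omega\rangle\,{\rm dvol}_M=i\mu\,\|\hat\omega\|_{L^2(M)}^2$ is purely imaginary with imaginary part $\mu\|\hat\omega\|_{L^2(M)}^2$. So I only need a first eigenform of \emph{negative} rotational weight $\mu$.

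First I would record that $\phi_s$ is an isometry of $(\mathbb{B}^{2n},g)$, tangent to the boundary, with $\phi_s^*\eta=\eta$; hence $\phi_s^*$ commutes with $d$, $\delta$, $\Delta$ and with the boundary conditions in \eqref{eq:steklovmagnetic}, so it preserves the first eigenspace $E:=E_{\sigma_{1,1}}(M)$ of $T^{[1]}$. Differentiating, $\mathcal{L}_\eta$ maps $E$ to itself, and because $\eta^\sharp$ is Killing and tangent to $\partial M$ the operator $\mathcal{L}_\eta$ is skew-adjoint on $L^2$; thus $i\mathcal{L}_\eta$ is Hermitian on $E\otimes\CC$ with real eigenvalues (the weights $\mu$). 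As $\mathcal{L}_\eta$ is real, these weights occur in conjugate pairs $\pm\mu$ with eigenforms $\hat\omega,\overline{\hat\omega}$. Consequently, as soon as $E$ contains one eigenform of nonzero weight, it contains one of negative weight, for which the integral above has negative imaginary part and Theorem~\ref{thm:eigtaylor} applies (should the Hermitian convention be conjugate-linear in the first slot, one simply selects the partner of the pair making the imaginary part negative).

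To exhibit such eigenforms I would use the explicit description of the first eigenspace on the Euclidean ball (as computed in the preceding sections, cf.\ \cite{RS:12}). The admissible harmonic $1$-forms with linear coefficients are exactly $\omega_A=\sum_{i,j}A_{ij}x_i\,dx_j$ with $A$ skew-symmetric, since $\nu\lrcorner\omega_A|_{\partial M}=\sum A_{ij}x_ix_j$ vanishes on the sphere iff $A\in\mathfrak{so}(2n)$; a direct computation of $\nu\lrcorner d\omega_A$ on $\partial M$ shows these are first Steklov eigenforms. For $n\ge 2$ the complex form $\hat\omega=\bar z_1\,d\bar z_2-\bar z_2\,d\bar z_1$ is of this type, so $\hat\omega\in E\otimes\CC$; it is harmonic with $\nu\lrcorner\hat\omega|_{\partial M}=0$ and satisfies $\phi_s^*\hat\omega=e^{-2is}\hat\omega$, i.e.\ weight $\mu=-2\ne 0$. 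For $n=1$, where $\mathfrak{so}(2)$ carries only the rotation-invariant form, I would instead take the degree-two eigenform $\hat\omega=d\bar z_1-\bar z_1^2\,dz_1$, which is harmonic, satisfies $\nu\lrcorner\hat\omega|_{\partial M}=0$ and lies in the first eigenspace, and has $\phi_s^*\hat\omega=e^{-is}\hat\omega$, i.e.\ weight $\mu=-1$. In either case the hypothesis of Theorem~\ref{thm:eigtaylor} holds, and $\sigma_{1,1}^{t\eta}(M)<\sigma_{1,1}(M)$ follows for small $t>0$.

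The main obstacle is the eigenspace identification: I must confirm that the forms above realize the \emph{first} eigenvalue $\sigma_{1,1}(M)$ rather than a higher one, uniformly in $n$. This is precisely where the explicit Steklov spectrum of the ball enters. The supporting observations are that constant $1$-forms violate $\nu\lrcorner\hat\omega|_{\partial M}=0$ and that there are no nonzero tangential harmonic fields (since $H^1(\mathbb{B}^{2n})=0$), so nothing sits strictly below the level of $\omega_A$; the precise statement that this level is the bottom of the spectrum, together with the routine verification of the eigenvalue equations via $\iota^*$ on $\partial M$, is the one genuinely computational ingredient, after which the symmetry bookkeeping above closes the argument.
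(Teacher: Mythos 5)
Your reduction to Theorem \ref{thm:eigtaylor} via rotational weights is sound, and your $n=1$ form is correct: $d\bar z_1-\bar z_1^2\,dz_1$ is harmonic, satisfies $\nu\lrcorner\hat\omega=0$ on $\partial M$, is a Steklov eigenform of eigenvalue $2=\sigma_{1,1}(\mathbb{B}^2)$, and has weight $-1$; in fact it coincides (up to a factor) with the eigenform the paper itself uses when $n=1$. However, for $n\geq 2$ there is a genuine gap, and it sits exactly where you flagged the "main obstacle": the linear forms $\omega_A=\sum_{i,j}A_{ij}x_i\,dx_j$ with $A$ skew-symmetric (complex or real), hence in particular $\bar z_1\,d\bar z_2-\bar z_2\,d\bar z_1$, are \emph{not} first eigenforms. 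They are their own harmonic extensions (harmonic, with $\nu\lrcorner\omega_A|_{\partial M}=\sum A_{ij}x_ix_j=0$), and the elementary computation $\left(r\partial_r\right)\lrcorner d\omega_A=\sum_{i,j}A_{ij}(x_i\,dx_j-x_j\,dx_i)=2\,\omega_A$ shows, with the paper's sign convention ($T^{[1]}\omega=\partial_r\lrcorner d\hat\omega$ at $r=1$), that their Steklov eigenvalue is exactly $2$ in \emph{every} dimension. But by \cite[Cor.~4]{RS:14} — the very result the paper quotes — one has $\sigma_{1,1}(\mathbb{B}^{2n})=\frac{n+1}{n}<2$ for $n\geq 2$, and the first eigenspace is spanned by the harmonic extensions of the \emph{exact} spherical forms $d^{\mathbb{S}^{2n-1}}\phi$ with $\phi$ a degree-one spherical harmonic; these extensions, given by \eqref{eq:harmonicextensionomega}, contain $r^3$ terms and are not of your linear type. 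So your assertion that "nothing sits strictly below the level of $\omega_A$" is false for $n\geq 2$; the observations you offer in support (constant forms violate the boundary condition, $H^1(\mathbb{B}^{2n})=0$) only show $\sigma_{1,1}>0$, not that the linear level $2$ is the bottom of the spectrum.

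This gap is fatal rather than cosmetic, because Theorem \ref{thm:eigtaylor} genuinely requires a \emph{first} eigenform: its proof is a min-max comparison in which the $t^0$ term of the Rayleigh quotient of $\hat\omega$ is the eigenvalue of $\hat\omega$. Plugging in your weight $-2$ form only yields $\sigma^{t\eta}_{1,1}(M)\leq 2-ct+O(t^2)$, which for small $t>0$ says nothing against $\sigma_{1,1}(M)=\frac{n+1}{n}$. The repair is precisely the paper's argument: take $\bar\phi_j=x_j-iy_j$, set $\omega_j=d^{\mathbb{S}^{2n-1}}\bar\phi_j$ (a genuine first eigenform by \cite[Prop.~7]{RS:14}), and use its explicit harmonic extension \eqref{eq:harmonicextensionomega}; since each $\bar z_j$ transforms by $e^{-is}$ under your rotation flow and the extension's radial coefficients are flow-invariant, this extension has weight $\mu=-1$, and your weight/pairing bookkeeping (which is a clean conceptual packaging of the paper's direct computation $\mathcal{L}_\eta\overline{\hat\omega_j}=-i\,\overline{\hat\omega_j}$) then closes the proof for all $n\geq 1$.
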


Finally, in Section \ref{sec:examples}, we present some explicit spectral computations of magnetic Hodge Laplacians and magnetic Steklov operators on differential forms on some Euclidean spheres and balls.

\section{Magnetic Steklov operator on differential forms} \label{sec:wellposednesssteklovforms}
In this section, we prove existence and uniqueness for the magnetic Steklov problem \eqref{eq:steklovmagnetic} for differential forms.

\subsection{Uniqueness of a solution}\label{ss:unique}
In this section, we prove the following result which gives uniqueness in \eqref{eq:steklovmagnetic}. 

\begin{theorem} \label{thm:dirichletmagnetic} Let $(M^m,g)$ be a compact Riemannian manifold of dimension $m$ with smooth boundary $\partial M$. If $\omega$ is a complex differential $k$-form which is magnetic harmonic on $M$ and vanishes on $\partial M$, then $\omega$ vanishes everywhere. 
\end{theorem}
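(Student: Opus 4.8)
The plan is to combine an energy identity coming from Green's formula with a unique continuation principle. First I would record the integration-by-parts formula for the magnetic operators. Since $\eta$ is real, the pointwise adjoint of the algebraic (zeroth-order) operator $i\eta\wedge$ is exactly $-i\eta\lrcorner$, so the zeroth-order parts of $d^\eta$ and $\delta^\eta$ contribute no boundary terms and the classical Green's formula carries over verbatim: for a $(k-1)$-form $\alpha$ and a $k$-form $\beta$,
$$\int_M \langle d^\eta\alpha,\beta\rangle\dvm-\int_M\langle\alpha,\delta^\eta\beta\rangle\dvm=\int_{\pam}\langle\iota^*\alpha,\nu\lrcorner\beta\rangle\dvpm.$$
Applying this with $(\alpha,\beta)=(\delta^\eta\omega,\omega)$ and with $(\alpha,\beta)=(\omega,d^\eta\omega)$, and using $\Delta^\eta\omega=0$, I would obtain
$$0=\int_M\langle\Delta^\eta\omega,\omega\rangle\dvm=\|d^\eta\omega\|_{L^2}^2+\|\delta^\eta\omega\|_{L^2}^2+(\text{boundary terms}).$$
The boundary integrands involve $\iota^*\omega$ and $\nu\lrcorner\omega$, which both vanish because $\omega=0$ on all of $\pam$. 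Hence $d^\eta\omega=0$ and $\delta^\eta\omega=0$ throughout $M$.

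The crux is to upgrade ``$\omega$ is a magnetic harmonic field vanishing on $\pam$'' to ``$\omega\equiv 0$'', since harmonic fields need not vanish on a closed manifold. For this I would extend across the boundary: attach a collar to produce a slightly larger connected manifold $\tilde M\supset M$, extend $\eta$ smoothly to $\tilde\eta$, and set $\tilde\omega:=\omega$ on $M$ and $\tilde\omega:=0$ on $\tilde M\setminus M$. The vanishing of the full Dirichlet trace of $\omega$ is precisely what prevents a singular (delta-type) contribution along $\pam$: testing against smooth compactly supported forms and applying the Green's formula above (with the boundary terms again annihilated by $\iota^*\omega=0$ and $\nu\lrcorner\omega=0$) shows that $d^{\tilde\eta}\tilde\omega=0$ and $\delta^{\tilde\eta}\tilde\omega=0$ hold weakly on $\tilde M$.

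Consequently $\Delta^{\tilde\eta}\tilde\omega=0$ weakly. Since $\Delta^{\tilde\eta}$ is a Laplace-type operator with smooth coefficients and scalar principal symbol $|\xi|^2\,\mathrm{Id}$, elliptic regularity makes $\tilde\omega$ smooth, and Aronszajn's strong unique continuation theorem applies. As $\tilde\omega$ vanishes on the nonempty open set $\tilde M\setminus M$, it must vanish on the connected component containing that set, so $\omega\equiv 0$ on $M$ (treating the connected components of $M$ separately, and choosing the collar so each is reached). I expect the main obstacle to be the careful justification that the zero-extension is a genuine weak solution—that is, that the vanishing boundary trace really does eliminate the distributional jump terms across $\pam$—together with the correct invocation of unique continuation for the bundle-valued operator $\Delta^{\tilde\eta}$; by comparison the energy identity is routine.
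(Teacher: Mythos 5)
Your proof is correct, but it takes a genuinely different and shorter route than the paper. Both arguments begin identically: the magnetic Green formula (whose boundary terms cancel because $\eta$ is real) plus the full vanishing of $\omega$ on $\pam$ give $d^\eta\omega=\delta^\eta\omega=0$. From there the paper works hard at the boundary: it first shows by hand, using the collar structure equations \eqref{eq:vanishing1}--\eqref{eq:vanishing2}, that $\omega$ vanishes to infinite order at $\pam$ (Step 1), then proves the delicate weighted inequality of Lemma \ref{lem:magneticineq} (a Carleman-type estimate in the spirit of Ann\'e) to conclude that $\omega$ vanishes on a whole collar neighbourhood (Step 2); only then is the zero extension to the double manifestly smooth, and the interior unique continuation theorem \cite[Thm. 3.4.3]{Sch:95} finishes the job (Step 3). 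You bypass Steps 1--2 entirely: since \emph{both} traces $\iota^*\omega$ and $\nu\lrcorner\omega$ vanish, the extension of $\omega$ by zero across an external collar is a distributional solution of $d^{\tilde\eta}\tilde\omega=0$ and $\delta^{\tilde\eta}\tilde\omega=0$ (your trace computation is the right one, and it is exactly what kills the potential surface-delta terms), hence of $\Delta^{\tilde\eta}\tilde\omega=0$; hypoellipticity of this elliptic system with smooth coefficients and principal symbol $|\xi|^2\,\mathrm{Id}$ then yields smoothness of $\tilde\omega$ across $\pam$ for free, and the same unique continuation input concludes. What each approach buys: yours outsources all boundary regularity to standard distribution theory and interior elliptic regularity, which makes the proof much shorter; the paper's keeps the boundary analysis explicit and self-contained, so that the only external ingredient is the interior unique continuation result. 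Two small points to tighten in your write-up: (i) Aronszajn's theorem as usually stated is for scalar equations, so you should either invoke the Aronszajn--Cordes version for systems with diagonal principal part, or, more economically, note that $d^{\tilde\eta}\tilde\omega=\delta^{\tilde\eta}\tilde\omega=0$ gives the pointwise bounds $|d\tilde\omega|\leq \|\tilde\eta\|_\infty|\tilde\omega|$ and $|\delta\tilde\omega|\leq \|\tilde\eta\|_\infty|\tilde\omega|$, and quote exactly the theorem the paper uses, \cite[Thm. 3.4.3]{Sch:95}; (ii) the component-by-component remark implicitly assumes every component of $M$ has nonempty boundary, which is indeed the standing convention here (and is also implicit in the paper's doubling construction), since otherwise the statement itself fails on a closed component.
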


The proof of this theorem for the case without magnetic potential was carried out by C. Ann{\'e} in \cite{Ann:89}. To prove this theorem in the magnetic case, we follow the same strategy and make the necessary modifications to account for the terms involving the magnetic potential.

\begin{proof}
The proof of this theorem is divided into several steps. In the first step, we prove that $\omega$ vanishes to infinite order at $t=0$ in a tubular neighborhood of the boundary. In the second step, we use a technical lemma (which is analogous to the non-magnetic version given in \cite{Ann:89}) to show that $\omega$ vanishes on a small tubular neighborhood of the boundary. In the last step, we construct a double copy of the manifold to show that $\omega$ vanishes on some open set of a closed manifold and deduce that it therefore vanishes everywhere. \\

\noindent {\bf Step 1}:  
We consider a tubular neighborhood of the boundary, that is a manifold of the form $[0,t_0]\times \partial M$ equipped with the metric $dt^2\oplus g_t$, where $t$ is the distance to the boundary in normal coordinates. Henceforth, we canonically identify the tubular neighborhood with $[0,t_0]\times \partial M$. The form $\omega$ can be written as $\omega=\omega_1(t,\cdot)+dt\wedge\omega_2(t,\cdot)$ where $\omega_1,\omega_2$ are differential forms on $\partial M\times \{t\}$ and the magnetic potential $\eta$ can be written as $\eta=\eta_1+\eta_2 dt$. Since $\omega$ vanishes on $\partial M$, the forms $\omega_1$ and $\omega_2$ both vanish at $t=0$. In the following, we denote by $'$ the derivative with respect to $t$ and by $d^N$ the exterior differential on $\partial M$. 
A straightforward computation shows that for any differential $k$-form $\gamma\in \Omega^k(\partial M\times \{t\}, \mathbb{C})$, the following identity $d\gamma=d^{N}\gamma+dt\wedge \gamma'$ holds, where we have set $\gamma'(X_1,\ldots,X_k):=\frac{d}{dt}(\gamma(X_1,\ldots,X_k))$ for smooth vector fields $X_1,\ldots,X_k$ on $\partial M$.  This gives
\begin{equation}\label{eq:exteriordiffmagnetic}
d^\eta\omega=d^{\eta_1}\omega_1+dt\wedge (\omega_1'-d^{\eta_1}\omega_2+i\eta_2\omega_1)
\end{equation}
where $d^{\eta_1}:=d^N+i\eta_1\wedge$. 

Now, we compute the codifferential $\delta^\eta$ (i.e. the formal adjoint of $d^\eta$). We denote by $\delta_t^{\eta_1}:=\delta_t-i\eta_1\lrcorner$ the formal adjoint of $d^{\eta_1}$ with respect to the metric $g_t$ where $\delta_t$ is the formal adjoint of $d^{\partial M}$ with respect to $g_t$.  For any $\widetilde\omega=\widetilde\omega_1+dt\wedge\widetilde\omega_2$ with support in $(0,t_0)\times \partial M$, we have, by \eqref{eq:exteriordiffmagnetic},
\begin{eqnarray}\label{eq:adjointdmagnetic}
    \int_M\langle d^\eta\omega,\widetilde\omega\rangle {\rm dvol}_M&=&\int_M\langle d^{\eta_1}\omega_1,\widetilde\omega_1\rangle {\rm dvol}_M+\int_M\langle\omega_1'-d^{\eta_1}\omega_2+i\eta_2\omega_1,\widetilde\omega_2\rangle {\rm dvol}_M\nonumber\\
    &=&\int_M\langle \omega_1,\delta_t^{\eta_1}\widetilde\omega_1\rangle {\rm dvol}_M+\int_M\langle \omega_1',\widetilde\omega_2\rangle {\rm dvol}_M\nonumber\\&&-\int_M\langle\omega_2,\delta_t^{\eta_1}\widetilde\omega_2\rangle {\rm dvol}_M-\int_M\langle\omega_1,i\eta_2\widetilde\omega_2 \rangle{\rm dvol}_M\nonumber\\
    &=&\int_M\langle \omega,\delta_t^{\eta_1}\widetilde\omega_1\rangle {\rm dvol}_M+\int_M\langle \omega_1',\widetilde\omega_2\rangle {\rm dvol}_M\nonumber\\&&-\int_M\langle\omega,dt\wedge\delta_t^{\eta_1}\widetilde\omega_2\rangle {\rm dvol}_M-\int_M\langle\omega,i\eta_2\widetilde\omega_2 \rangle{\rm dvol}_M\nonumber.\\
\end{eqnarray}
We now compute the second integral in the r.h.s. of the equality above. To simplify some computations, we introduce the positive symmetric endomorphism $A_t$ of the tangent space of $\partial M$ given by $g_t(X,Y)=g_0(A_tX,Y)$ where $g_0=g$ is the metric on the manifold $\partial M$ induced from the metric on $M$. Clearly the endomorphism $A_t$ on the tangent space extends to a symmetric endomorphism $B_t$ on complex differential $k$-forms on $\partial M$ in a way that $\langle\omega_1,\omega_2\rangle_{t}=\langle B_t\omega_1,\omega_2\rangle_{0}$. In the following, we denote by $v(t,y)$ the function given by ${\rm dvol}_M=v(t,y)dt\,{\rm dvol}_{\partial M}$. We have 
\begin{eqnarray*}
    0&=&\int_0^{t_0}\frac{d}{dt}\left(\langle \omega_1,\widetilde\omega_2\rangle_{t} v(t,y)\right) dt\\
    &=&\int_0^{t_0}\frac{d}{dt}\left(\langle B_t\omega_1,\widetilde\omega_2\rangle_{0} v(t,y)\right) dt\\
    &=&\int_0^{t_0}(\langle B_t'\omega_1,\widetilde\omega_2\rangle_{0} v(t,y)+\langle B_t\omega_1',\widetilde\omega_2\rangle_{0}v(t,y)+\langle B_t\omega_1,\widetilde\omega_2'\rangle_{0}v(t,y)
    +\langle B_t\omega_1,\widetilde\omega_2\rangle_{0} v'(t,y)) dt\\
    &=&\int_0^{t_0}(\langle B_t^{-1}B_t'\omega_1,\widetilde\omega_2\rangle_{t} v(t,y)+\langle \omega_1',\widetilde\omega_2\rangle_{t}v(t,y)+\langle \omega_1,\widetilde\omega_2'\rangle_{t}v(t,y)
    +\langle \omega_1,\widetilde\omega_2\rangle_{t} v'(t,y)) dt.
\end{eqnarray*}
After integrating over the boundary, we deduce that 
\begin{eqnarray}\label{eq:derivativeintegralmagnetic}
    \int_M\langle \omega_1',\widetilde\omega_2\rangle{\rm dvol}_M&=&-\int_M\langle B_t^{-1}B_t'\omega_1,\widetilde\omega_2\rangle{\rm dvol}_M-\int_M\langle \omega_1,\widetilde\omega_2'\rangle{\rm dvol}_M
    -\int_M\langle \omega_1,\widetilde\omega_2\rangle \frac{v'(t,y)}{v(t,y)} {\rm dvol}_M.
\end{eqnarray}
Plugging Equation \eqref{eq:derivativeintegralmagnetic} into \eqref{eq:adjointdmagnetic} yields the following formula for $\delta^{\eta}\widetilde\omega$ with $\widetilde\omega=\widetilde\omega_1+dt\wedge\widetilde\omega_2$:
\begin{equation}\label{eq:deltaalphamagnetic}
\delta^\eta\widetilde\omega=\delta_t^{\eta_1}\widetilde\omega_1-\left(B'_tB_t^{-1}+\frac{v'(t,y)}{v(t,y)}+i\eta_2\right)\widetilde\omega_2-\widetilde\omega_2'-dt\wedge\delta_t^{\eta_1}\widetilde\omega_2.
\end{equation}
Since $\omega$ is magnetic harmonic and vanishes on $\partial M$, by the magnetic Stokes formula it follows that
\begin{equation*}
    0 = \int_M \langle \Delta^\eta \omega, \omega\rangle {\rm dvol}_M 
    = \int_M \langle d^\eta \omega, d^\eta \omega\rangle {\rm dvol}_M
    + \int_M \langle \delta^\eta \omega, \delta^\eta \omega\rangle {\rm dvol}_M,
\end{equation*}
and hence $d^\eta\omega=0$  and $\delta^\eta\omega=0$. 
Using Equation \eqref{eq:exteriordiffmagnetic}, we get
\begin{equation}\label{eq:vanishing1}
    d^{\eta_1}\omega_1=0,\,\, \omega_1'=d^{\eta_1}\omega_2-i\eta_2\omega_1.
\end{equation} 
Equation \eqref{eq:deltaalphamagnetic} gives the two other equations 
\begin{equation}\label{eq:vanishing2}
    \delta_t^{\eta_1}\omega_2=0,\,\, \omega_2'=\delta_t^{\eta_1}\omega_1-\left(B'_tB_t^{-1}+\frac{v'(t,y)}{v(t,y)}+i\eta_2\right)\omega_2.
\end{equation}
The four equations in \eqref{eq:vanishing1} and \eqref{eq:vanishing2} allow us to express higher order derivatives in terms of lower order derivatives and hence give us that $\omega_1$ and $\omega_2$ vanish to infinite order at $t=0$.  

{\bf Step 2:} In this step, we prove that $\omega$ vanishes in a small tubular neighborhood of the boundary. For this, we need the following technical lemma (which is analogous to the non-magnetic version given in \cite{Ann:89}): 
\begin{lemma}\label{lem:magneticineq}
Let $(M^m,g)$ be a Riemannian manifold of dimension $m$ with smooth boundary and let $\eta$ be a real $1$-form on $M$. Let $T>0$ be such that there is a canonical isometry between $[0,T]\times \partial M$ and a tubular neighborhood in $M$. Then there exists a $t_0\in (0,T)$ such that for any complex differential $k$-form $\alpha$ that vanishes to infinite order at $t=0$ and with support in a tubular neighborhood $(0,t_0)\times \partial M$ of the boundary, we have 
$$\int_M t^{-n-1}|\alpha|^2{\rm dvol}_M\leq 4 \int_M t^{-n+1}(|d^\eta\alpha|^2+|\delta^\eta\alpha|^2){\rm dvol}_M,$$
for any $n\in \NN$, $n \geq 2$.
\end{lemma}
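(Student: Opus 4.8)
The plan is to bundle the two first-order operators into the magnetic Hodge--Dirac operator $\mathcal{D}^\eta:=d^\eta+\delta^\eta$. Since $d^\eta\alpha$ (degree $k+1$) and $\delta^\eta\alpha$ (degree $k-1$) are pointwise orthogonal, we have $|\mathcal{D}^\eta\alpha|^2=|d^\eta\alpha|^2+|\delta^\eta\alpha|^2$, so the right-hand side of the lemma becomes $\int_M t^{-n+1}|\mathcal{D}^\eta\alpha|^2\,{\rm dvol}_M$. Writing $\alpha=\alpha_1+dt\wedge\alpha_2$ on the collar and adding \eqref{eq:exteriordiffmagnetic} to \eqref{eq:deltaalphamagnetic}, one checks that the normal derivatives assemble into Clifford multiplication by $dt$: setting $c(dt)\beta:=dt\wedge\beta-dt\lrcorner\beta$, a direct computation gives $c(dt)\partial_t\alpha=dt\wedge\alpha_1'-\alpha_2'$, which matches exactly the $t$-derivative terms appearing in $\mathcal{D}^\eta\alpha$. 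Hence $\mathcal{D}^\eta=c(dt)(\partial_t+\mathcal{A}_t)$, where $\mathcal{A}_t$ is a purely tangential operator whose first-order part is the formally self-adjoint boundary Hodge--Dirac operator of $g_t$, and whose remainder collects the bounded zeroth-order coefficients $B_t'B_t^{-1}+\tfrac{v'}{v}$ together with the magnetic terms in $\eta_1,\eta_2$. As $c(dt)$ is a pointwise isometry, this yields the clean identity $|\mathcal{D}^\eta\alpha|^2=|(\partial_t+\mathcal{A}_t)\alpha|^2$, reducing the lemma to a one-dimensional weighted estimate with a tangential perturbation.

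Next I would expand $|(\partial_t+\mathcal{A}_t)\alpha|^2=|\partial_t\alpha|^2+2\,{\rm Re}\langle\partial_t\alpha,\mathcal{A}_t\alpha\rangle+|\mathcal{A}_t\alpha|^2$ and integrate against $t^{-n+1}\,{\rm dvol}_M$. For the pure normal term I apply the sharp one-dimensional Hardy inequality fiberwise: a form vanishing to infinite order at $t=0$ and supported in $(0,t_0)$ satisfies $\int_0^{t_0}t^{-n-1}|f|^2\,dt\le\frac{4}{n^2}\int_0^{t_0}t^{-n+1}|f'|^2\,dt$, giving $\int_M t^{-n+1}|\partial_t\alpha|^2\ge\frac{n^2}{4}\int_M t^{-n-1}|\alpha|^2$ up to errors from the $t$-dependence of $g_t$. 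The step I expect to be the main obstacle is the cross term: integrating by parts in $t$ and using that the principal part of $\mathcal{A}_t$ is self-adjoint on $\partial M$ converts $\int_M t^{-n+1}2\,{\rm Re}\langle\partial_t\alpha,\mathcal{A}_t\alpha\rangle$ into $(n-1)\int_M t^{-n}\langle\alpha,\mathcal{A}_t\alpha\rangle$ plus lower-order pieces, and a Young inequality with optimal weight bounds its negative part by $\frac{(n-1)^2}{4}\int_M t^{-n-1}|\alpha|^2+\int_M t^{-n+1}|\mathcal{A}_t\alpha|^2$. The crucial point is that the second summand is cancelled exactly by the $+|\mathcal{A}_t\alpha|^2$ term from the expansion, so the tangential derivatives never need to be estimated in isolation; the remainder combines with the Hardy term to give, in the model case, $\int_M t^{-n+1}|\mathcal{D}^\eta\alpha|^2\ge\frac{n^2-(n-1)^2}{4}\int_M t^{-n-1}|\alpha|^2=\frac{2n-1}{4}\int_M t^{-n-1}|\alpha|^2$.

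Finally I would collect the error terms generated along the way: the discrepancy between $|\partial_t\alpha|_{g_t}$ and the $g_0$-coefficient derivatives, the factors $B_t'B_t^{-1}$ and $\tfrac{v'}{v}$, the contribution of $\partial_t\mathcal{A}_t$ produced in the integration by parts, and the bounded magnetic and zeroth-order part of $\mathcal{A}_t$. Each has the form $\int_M t^{-n}(\cdots)$ or $\int_M t^{-n+1}(\cdots)$ with a coefficient uniformly bounded on $[0,t_0]\times\partial M$, so after a Young inequality and using $t\le t_0$ on the support each is controlled by $Ct_0\int_M t^{-n-1}|\alpha|^2$ plus a small multiple of $\int_M t^{-n+1}(|d^\eta\alpha|^2+|\delta^\eta\alpha|^2)$. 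Choosing $t_0$ small enough absorbs all of these into the leading terms and relaxes the model constant $\frac{4}{2n-1}$ (already $\le\tfrac43$ for $n\ge2$) to the stated $4$, uniformly in $n\ge2$. The only genuinely delicate bookkeeping is ensuring that every perturbation is lower order with respect to the Hardy weight, so that a single smallness threshold for $t_0$ works for all $n$ simultaneously.
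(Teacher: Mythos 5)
Your proposal has the same skeleton as the paper's proof, just dressed in Clifford notation: a weighted Hardy inequality on the normal derivative, an integration by parts in $t$ turning the cross terms into $(n-1)\int_M t^{-n}\langle\cdot,\cdot\rangle$, a Young inequality whose tangential square is cancelled exactly by the tangential square in the expansion, the model constant $\frac{2n-1}{4}\geq\frac34$, and absorption of all metric/magnetic errors by shrinking $t_0$. The algebraic setup is fine: $c(dt)$ is indeed a pointwise isometry, the $t$-derivative terms of $d^\eta\alpha$ and $\delta^\eta\alpha$ do assemble into $c(dt)\partial_t\alpha$ (compare \eqref{eq:exteriordiffmagnetic} and \eqref{eq:deltaalphamagnetic}), and the principal part of $\mathcal{A}_t$ is formally self-adjoint on $(\partial M,g_t)$.

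There is, however, a genuine gap in your error bookkeeping: you list ``the contribution of $\partial_t\mathcal{A}_t$ produced in the integration by parts'' among terms ``with a coefficient uniformly bounded on $[0,t_0]\times\partial M$'', and that is false. The operator $\mathcal{A}_t$ contains the $g_t$-codifferential $\delta_t^{\eta_1}$, and $\partial_t\delta_t$ is a genuine \emph{first-order} tangential operator (differentiate $\delta_t=\pm *_t d\, *_t$: the derivative of the Hodge star is algebraic, but $d$ survives). So $\int_M t^{-n+1}\langle\alpha,(\partial_t\mathcal{A}_t)\alpha\rangle\,{\rm dvol}_M$ involves tangential derivatives of $\alpha$, which are controlled neither by $|\alpha|$ nor by $|\partial_t\alpha|$; the only quantity that could control them, $\int_M t^{-n+1}|\mathcal{A}_t\alpha|^2\,{\rm dvol}_M$, has been spent \emph{exactly} in your Young step, and it cannot be partially reserved with an $n$-independent fraction, since $\frac{(n-1)^2}{4(1-\epsilon)}<\frac{n^2}{4}$ forces $\epsilon<\frac{2n-1}{n^2}\to 0$. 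The paper's componentwise computation is arranged precisely to avoid this term: it differentiates $t^{-n+1}\langle\alpha_1,d^{\eta_1}\alpha_2\rangle_t\,v(t,y)$, where the only $t$-dependence of the operator $d^{\eta_1}=d^N+i\eta_1\wedge$ is through $\eta_1$ (producing only the bounded term $i\eta_1'\wedge\alpha_2$), and converts $\langle\alpha_1,d^{\eta_1}\alpha_2'\rangle_t$ into $\langle\delta_t^{\eta_1}\alpha_1,\alpha_2'\rangle_t$ by adjointness at \emph{fixed} $t$ \emph{after} differentiating, so $\partial_t\delta_t$ never appears. Your argument can be repaired either by this device, or by keeping an $n$-dependent reserve $\epsilon_n\sim\frac{2n-1}{2n^2}$ of $|\mathcal{A}_t\alpha|^2$ and invoking a fiberwise elliptic (G\aa rding) estimate on the closed manifold $\partial M$, $\|(\partial_t\mathcal{A}_t)\alpha\|_{L^2(\partial M)}\leq C\bigl(\|\mathcal{A}_t\alpha\|_{L^2(\partial M)}+\|\alpha\|_{L^2(\partial M)}\bigr)$, after which both the surviving margin $\frac{2n-1}{8}$ and the resulting error $\sim n\,t_0^2$ grow linearly in $n$, so a single small $t_0$ works for all $n\geq 2$; but this is an additional argument your proposal does not contain.
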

The proof of this lemma is very technical and relies on several inequalities.
We follow the same strategy of proof as that given for the non-magnetic case in \cite{Ann:89}.

\begin{proof} Let $t_0\in (0,T)$ that we will determine later, and $\alpha=\alpha_1+dt\wedge\alpha_2$ be a complex differential $k$-form as stated in the lemma. Before computing the norms of $d^\eta\alpha$ and $\delta^\eta\alpha$, we will prove the following inequality 
\begin{equation}\label{eq:inequalityalphaalpha'magnetic}
\left(\frac{n-M_1t_0}{2}\right)^2\int_M t^{-n-1}|\alpha|^2{\rm dvol}_M\leq \int_M t^{-n+1}|\alpha'|^2{\rm dvol}_M,
\end{equation}
for some constant $M_1$ that we will determine later and $n \in \mathbb{N}, n \geq 2$. Indeed, since $\alpha$ vanishes to infinite order at $t=0$, and the support of $\alpha$ is in $(0,t_0)\times \partial M$, we have
\begin{eqnarray*}
0&=&\int_0^{t_0}\frac{d}{dt}\left(t^{-n}|\alpha|^2 v(t,y)\right)dt\\
&=&-n\int_0^{t_0}t^{-n-1}|\alpha|^2 v(t,y)dt+\int_0^{t_0}t^{-n}\frac{d}{dt}(|\alpha_1|^2+|\alpha_2|^2)v(t,y)dt
+\int_0^{t_0}t^{-n}|\alpha|^2 v(t,y)'dt.
\end{eqnarray*}
Recall here that $v$ is the function given by ${\rm dvol}_M=v(t,y)dt\,{\rm dvol}_{\partial M}$. Now, we have that 
$$\frac{d}{dt}(|\alpha_1|^2)=\frac{d}{dt}(\langle B_t\alpha_1,\alpha_1\rangle_{0})=\langle B_t^{-1}B_t'\alpha_1,\alpha_1\rangle_{t}+2\Re(\langle\alpha_1',\alpha_1\rangle_{t}).$$
The same computation can be done for $\alpha_2$. This allows us to get the following 
\begin{eqnarray*}
n\int_0^{t_0}t^{-n-1}|\alpha|^2 v(t,y)dt&=&\int_0^{t_0}t^{-n}(\langle B_t^{-1}B_t'\alpha_1,\alpha_1\rangle_{t}+\langle B_t^{-1}B_t'\alpha_2,\alpha_2\rangle_{t})v(t,y)dt\\&&+2\int_0^{t_0}t^{-n}\Re(\langle\alpha_1',\alpha_1\rangle_{t}+\langle\alpha_2',\alpha_2\rangle_{t})v(t,y)dt\\&&+\int_0^{t_0}t^{-n}|\alpha|^2 v(t,y)'dt.
\end{eqnarray*}
Integrating over $(\partial M,g)$, using the fact that $\alpha=0$ on $M\setminus((0,t_0)\times \partial M)$, and the Cauchy-Schwarz inequality yields 
\begin{eqnarray*}
n\int_Mt^{-n-1}|\alpha|^2 {\rm dvol}_M&=&\int_Mt^{-n}(\langle B_t^{-1}B_t'\alpha_1,\alpha_1\rangle_{t}+\langle B_t^{-1}B_t'\alpha_2,\alpha_2\rangle_{t}){\rm dvol}_M\\&&+2 \int_Mt^{-n}\Re(\langle\alpha',\alpha\rangle_{t}){\rm dvol}_M+\int_Mt^{-n}|\alpha|^2 \frac{v(t,y)'}{v(t,y)}{\rm dvol}_M\\
&\leq &||B_t^{-1}B_t'||_\infty t_0\int_Mt^{-n-1} |\alpha|^2 {\rm dvol}_M\\&&+2\left(\int_Mt^{-n-1}|\alpha|^2{\rm dvol}_M\right)^\frac{1}{2}\left(\int_Mt^{-n+1}|\alpha'|^2{\rm dvol}_M\right)^\frac{1}{2}\\&&+\bigg \Vert\frac{v(t,y)'}{v(t,y)}\bigg\Vert_\infty t_0\int_Mt^{-n-1}|\alpha|^2{\rm dvol}_M.
\end{eqnarray*}
If we denote by $M_1:=||B_t^{-1}B_t'||_\infty+||\frac{v(t,y)'}{v(t,y)}||_\infty$, 
then, after squaring both sides and taking $t_0$ small enough such that 
\begin{equation}\label{eq:smallt0}
    2-M_1t_0>0,
\end{equation} 
and, as a consequence, that $n - M_1t_0>0$, we get 
\eqref{eq:inequalityalphaalpha'magnetic}.

Now, we define the integral $$I:=\int_M t^{-n+1}(|d^\eta\alpha|^2+|\delta^\eta\alpha|^2){\rm dvol}_M.$$ 
We define $C_t:=B'_tB_t^{-1}+\frac{v'(t,y)}{v(t,y)}+i\eta_2$.
Using Equations \eqref{eq:exteriordiffmagnetic} and \eqref{eq:deltaalphamagnetic}, we get 
\begin{eqnarray}\label{eq:expressionimagnetic}
I&=&\int_M t^{-n+1}(|\alpha'|^2+|d^{\eta_1}\alpha_1|^2+|d^{\eta_1}\alpha_2|^2+\eta_2^2|\alpha_1|^2+|\delta^{\eta_1}_t\alpha_1|^2+|C_t\alpha_2|^2+|\delta^{\eta_1}_t\alpha_2|^2\nonumber\\&&-2\Re(\langle \alpha_1',d^{\eta_1}\alpha_2\rangle)+2\Re(\langle \alpha_1',i\eta_2\alpha_1\rangle)-2\Re(\langle d^{\eta_1}\alpha_2,i\eta_2\alpha_1\rangle)-2\Re(\langle \delta_t^{\eta_1}\alpha_1,C_t\alpha_2\rangle\nonumber\\&&-2\Re(\langle\delta_t^{\eta_1}\alpha_1,\alpha_2'\rangle)+2\Re(\langle C_t\alpha_2,\alpha_2'\rangle)) {\rm dvol}_M. 
\end{eqnarray}
We need to compute some of the cross terms. For this, we write 
\begin{eqnarray*} 
0&=&\int_0^{t_0} \frac{d}{dt}(t^{-n+1}\langle \alpha_1,d^{\eta_1}\alpha_2\rangle_{t} v(t,y))dt\\
&=&\int_0^{t_0} \frac{d}{dt}(t^{-n+1}\langle B_t\alpha_1,d^{\eta_1}\alpha_2\rangle_{0} v(t,y))dt\\
&=&-(n-1)\int_0^{t_0} t^{-n}\langle \alpha_1,d^{\eta_1}\alpha_2\rangle_{t} v(t,y)dt+\int_0^{t_0} t^{-n+1}\langle B_t^{-1}B_t'\alpha_1, d^{\eta_1}\alpha_2\rangle_{t}v(t,y)dt\\
&&+\int_0^{t_0}t^{-n+1}\langle \alpha_1', d^{\eta_1}\alpha_2\rangle_{t}v(t,y)dt+\int_0^{t_0}t^{-n+1}\langle \delta_t^{\eta_1}\alpha_1, \alpha_2'\rangle_{t}v(t,y)dt\\&&+\int_0^{t_0}t^{-n+1}\langle \alpha_1, i\eta_1'\wedge\alpha_2\rangle_{t}v(t,y)dt+\int_0^{t_0}t^{-n+1}\langle \alpha_1, d^{\eta_1}\alpha_2\rangle_{t}v(t,y)'dt.
\end{eqnarray*}
Integrating over $(\partial M,g)$, we get that 
\begin{align*}
&\int_Mt^{-n+1}\langle \alpha_1', d^{\eta_1}\alpha_2\rangle {\rm dvol}_M+\int_Mt^{-n+1}\langle \delta_t^{\eta_1}\alpha_1, \alpha_2'\rangle {\rm dvol}_M
\\& \quad =(n-1)\int_M t^{-n}\langle \alpha_1,d^{\eta_1}\alpha_2\rangle {\rm dvol}_M-\int_M t^{-n+1}\langle B_t^{-1}B_t'\alpha_1, d^{\eta_1}\alpha_2\rangle {\rm dvol}_M
\\& \quad \quad -\int_M t^{-n+1}\langle \alpha_1, i\eta_1'\wedge\alpha_2\rangle {\rm dvol}_M-\int_M t^{-n+1}\langle \alpha_1, d^{\eta_1}\alpha_2\rangle \frac{v(t,y)'}{v(t,y)} {\rm dvol}_M. 
\end{align*} 
By replacing this last equality into the expression of $I$ in \eqref{eq:expressionimagnetic}, we get that 
\begin{eqnarray}\label{eq:estimateimagnetic}
I&=&\int_M t^{-n+1}(|\alpha'|^2+|d^{\eta_1}\alpha_1|^2+|d^{\eta_1}\alpha_2|^2+|\delta_t^{\eta_1}\alpha_1|^2+|C_t\alpha_2|^2+\eta_2^2|\alpha_1|^2+|\delta_t^{\eta_1}\alpha_2|^2){\rm dvol}_M\nonumber\\&&-2\int_M t^{-n+1}\Re(\langle \delta_t^{\eta_1}\alpha_1,C_t\alpha_2\rangle){\rm dvol}_M+2\int_M t^{-n+1}\Re(\langle C_t\alpha_2,\alpha_2'\rangle)) {\rm dvol}_M\nonumber\\&&-(n-1)\int_M t^{-n}\Re(\langle \alpha_1,d^{\eta_1}\alpha_2\rangle) {\rm dvol}_M-(n-1)\int_M t^{-n}\Re(\langle \delta_t^{\eta_1}\alpha_1,\alpha_2\rangle) {\rm dvol}_M\nonumber\\&&+2\int_M t^{-n+1}\Re(\langle B_t^{-1}B_t'\alpha_1, d^{\eta_1}\alpha_2\rangle) {\rm dvol}_M+2\int_M t^{-n+1}\Re(\langle \alpha_1, i\eta_1'\wedge\alpha_2\rangle) {\rm dvol}_M\nonumber\\&&
+2\int_M t^{-n+1}\Re(\langle \alpha_1, d^{\eta_1}\alpha_2\rangle) \frac{v(t,y)'}{v(t,y)} {\rm dvol}_M+2\int_M t^{-n+1}\Re(\langle\alpha_1',i\eta_2\alpha_1\rangle){\rm dvol}_M\nonumber\\
&&-2\int_M t^{-n+1}\Re(\langle d^{\eta_1}\alpha_2,i\eta_2\alpha_1\rangle){\rm dvol}_M\nonumber\\
&\geq &\int_M t^{-n+1}|\alpha'|^2{\rm dvol}_M+{\bf I}+{\bf II}+{\bf III},
\end{eqnarray}
where ${\bf I}, {\bf II}$ and ${\bf III}$ are given by
\begin{eqnarray*}
{\bf I}&=&\int_M t^{-n+1}|d^{\eta_1}\alpha_2|^2{\rm dvol}_M-(n-1)\int_M t^{-n}\Re(\langle \alpha_1,d^{\eta_1}\alpha_2\rangle) {\rm dvol}_M\\&&+2\int_M t^{-n+1}\Re(\langle B_t^{-1}B_t'\alpha_1, d^{\eta_1}\alpha_2\rangle) {\rm dvol}_M+2\int_M t^{-n+1}\Re(\langle \alpha_1, d^{\eta_1}\alpha_2\rangle) \frac{v(t,y)'}{v(t,y)} {\rm dvol}_M\\&&-2\int_M t^{-n+1}\Re(\langle d^{\eta_1}\alpha_2,i\eta_2\alpha_1\rangle){\rm dvol}_M,
\end{eqnarray*}
and 
\begin{eqnarray*}
{\bf II}&=&\int_M t^{-n+1}|\delta_t^{\eta_1}\alpha_1|^2{\rm dvol}_M-2\int_M t^{-n+1}\Re(\langle \delta_t^{\eta_1}\alpha_1,C_t\alpha_2\rangle){\rm dvol}_M\\&&-(n-1)\int_M t^{-n}\Re(\langle \delta_t^{\eta_1}\alpha_1,\alpha_2\rangle) {\rm dvol}_M,
\end{eqnarray*}
and 
\begin{eqnarray*}
{\bf III}&=&2\int_M t^{-n+1}\Re(\langle C_t\alpha_2,\alpha_2'\rangle) {\rm dvol}_M+2\int_M t^{-n+1}\Re(\langle \alpha_1,i\eta_1'\wedge\alpha_2\rangle) {\rm dvol}_M\\&&+2\int_M t^{-n+1}\Re(\langle \alpha'_1,i\eta_2\alpha_1\rangle) {\rm dvol}_M.
\end{eqnarray*}
Now, we estimate ${\bf I}, {\bf II}$ and ${\bf III}$ from below as follows. We take $M_2:=||B_t^{-1}B_t'||_\infty+||\frac{v(t,y)'} {v(t,y)}||_\infty+||\eta_2||_\infty$ and we have
\begin{eqnarray*}
 {\bf I}&\geq& \int_M (t^{-n+1}|d^{\eta_1}\alpha_2|^2-(n-1)t^{-n} |\alpha_1||d^{\eta_1}\alpha_2|-2M_2t_0t^{-n}|\alpha_1||d^{\eta_1}\alpha_2|) {\rm dvol}_M\\
 &=&\int_M \left((t^{\frac{-n+1}{2}}|d^{\eta_1}\alpha_2|)^2-2t^{\frac{-n+1}{2}}|d^{\eta_1}\alpha_2|\left(\frac{n-1+2M_2t_0}{2}\right) t^{\frac{-n-1}{2}} |\alpha_1|\right) {\rm dvol}_M\\
 &\geq &-\left(\frac{n-1+2M_2t_0}{2}\right)^2\int_M t^{-n-1}|\alpha_1|^2{\rm dvol}_M,
 \end{eqnarray*}
where we used the inequality $a^2-2ab\geq -b^2$ in the final step. 

In the same way, we estimate ${\bf II}$ by setting $M_3=||C_t||_\infty$ and we have
\begin{eqnarray*}
 {\bf II}&\geq& \int_M (t^{-n+1}|\delta_t^{\eta_1}\alpha_1|^2-2M_3t_0t^{-n} |\delta_t^{\eta_1}\alpha_1||\alpha_2|-(n-1)t^{-n}|\delta_t^{\eta_1}\alpha_1||\alpha_2|) {\rm dvol}_M\\
 &\geq &-\left(\frac{n-1+2M_3t_0}{2}\right)^2\int_M t^{-n-1}|\alpha_2|^2{\rm dvol}_M.
 \end{eqnarray*}
 
We are now left to estimate ${\bf III}$. For this we write 
\begin{eqnarray*}
 {\bf III}&=& 2\int_M \Re(\langle t^{\frac{-n-1}{2}}C_t\alpha_2,t^{\frac{-n+3}{2}}\alpha_2'\rangle)) {\rm dvol}_M\\&&+2\int_M t^2\Re(\langle t^{\frac{-n-1}{2}}\alpha_1,it^{\frac{-n-1}{2}}\eta_1'\wedge\alpha_2\rangle) {\rm dvol}_M\\&&+2\int_M \Re(\langle t^{\frac{-n+3}{2}} \alpha'_1,it^{\frac{-n-1}{2}}\eta_2\alpha_1\rangle) {\rm dvol}_M\\
 &\geq &-2M_3\left(\int_M t^{-n-1}|\alpha_2|^2{\rm dvol}_M\right)^\frac{1}{2}\left(\int_M t^{-n+3}|\alpha_2'|^2{\rm dvol}_M\right)^\frac{1}{2}\\&&-2||\eta_1'||_\infty t_0^2\int_M t^{-n-1}|\alpha|^2{\rm dvol}_M\\
 &&-2||\eta_2||_\infty\left(\int_M t^{-n-1}|\alpha_1|^2{\rm dvol}_M\right)^\frac{1}{2}\left(\int_M t^{-n+3}|\alpha_1'|^2{\rm dvol}_M\right)^\frac{1}{2}\\
&\stackrel{\eqref{eq:inequalityalphaalpha'magnetic}}{\geq }&-\frac{4M_4}{n-M_1t_0}\left(\int_M t^{-n+1}|\alpha'|^2{\rm dvol}_M\right)^\frac{1}{2}\left(\int_M t^{-n+3}|\alpha'|^2{\rm dvol}_M\right)^\frac{1}{2}\\&&-\frac{8||\eta_1'||_\infty t_0^2}{(n-M_1t_0)^2} \int_M t^{-n+1}|\alpha'|^2{\rm dvol}_M\\
&\geq &-\left(\frac{4M_4t_0}{n-M_1t_0}+\frac{8||\eta_1'||_\infty t_0^2}{(n-M_1t_0)^2}\right)\int_M t^{-n+1}|\alpha'|^2{\rm dvol}_M,
\end{eqnarray*}
where $M_4=M_3+||\eta_2||_\infty$. 

Plugging these estimates into \eqref{eq:estimateimagnetic}, for any $t_0$ such that \eqref{eq:smallt0} holds, we obtain that
\begin{eqnarray*}
I&\geq& \left(1-\frac{4M_4t_0}{n-M_1t_0}-\frac{8||\eta_1'||_\infty t_0^2}{(n-M_1t_0)^2}\right)\int_M t^{-n+1}|\alpha'|^2{\rm dvol}_M\\&&-\left(\frac{n-1+2M_2t_0}{2}\right)^2\int_M t^{-n-1}|\alpha_1|^2{\rm dvol}_M\\&&-\left(\frac{n-1+2M_3t_0}{2}\right)^2\int_M t^{-n-1}|\alpha_2|^2{\rm dvol}_M\\
&\stackrel{\eqref{eq:inequalityalphaalpha'magnetic}}{\geq }&\left(1-\frac{4M_4t_0}{n-M_1t_0}-\frac{8||\eta_1'||_\infty t_0^2}{(n-M_1t_0)^2}\right)\left(\frac{n-M_1t_0}{2}\right)^2\int_M t^{-n-1}(|\alpha_1|^2+|\alpha_2|^2){\rm dvol}_M\\&&-\left(\frac{n-1+2M_2t_0}{2}\right)^2\int_M t^{-n-1}|\alpha_1|^2{\rm dvol}_M\\&&-\left(\frac{n-1+2M_3t_0}{2}\right)^2\int_M t^{-n-1}|\alpha_2|^2{\rm dvol}_M.\\
&\geq&\left(1-\frac{4M_4t_0}{n-M_1t_0}-\frac{8||\eta_1'||_\infty t_0^2}{(n-M_1t_0)^2}\right)\left(\frac{n-M_1t_0}{2}\right)^2\int_M t^{-n-1}(|\alpha_1|^2+|\alpha_2|^2){\rm dvol}_M\\&&-\left(\frac{n-1+2(M_2+M_3)t_0}{2}\right)^2\int_M t^{-n-1}(|\alpha_1|^2+|\alpha_2|^2){\rm dvol}_M.\\
\end{eqnarray*}
Now
\begin{eqnarray*}
I&\geq& \liminf_{t_0\to 0}\left[\left(1-\frac{4M_4t_0}{n-M_1t_0}-\frac{8||\eta_1'||_\infty t_0^2}{(n-M_1t_0)^2}\right)\left(\frac{n-M_1t_0}{2}\right)^2\int_M t^{-n-1}(|\alpha_1|^2+|\alpha_2|^2){\rm dvol}_M\right.\\&&-\left.\left(\frac{n-1+2(M_2+M_3)t_0}{2}\right)^2\int_M t^{-n-1}(|\alpha_1|^2+|\alpha_2|^2){\rm dvol}_M\right]\\
&=& \frac{2n-1}{4}\int_M t^{-n-1}|\alpha|^2 {\rm dvol}_M \\
&\geq& \frac{3}{4} \int_M t^{-n-1}|\alpha|^2 {\rm dvol}_M.
\end{eqnarray*}
So, by continuity of the expression, there exists $t_0$ such that
$$I \geq \frac{1}{4}\int_M t^{-n-1}|\alpha|^2 {\rm dvol}_M$$
as required.
\end{proof}
We apply Lemma \ref{lem:magneticineq} to the form $\alpha=f\omega$, where $f$ is a real-valued smooth function which is zero for $t>t_0$ and is equal to $1$ on $[0,t_1]$ for some $t_1$ such that $0<t_1<t_0$. Clearly the form $\alpha$ satisfies the assumption of the lemma and, hence, we can apply the inequality to the form $\alpha$.
Since $\omega$ is magnetic harmonic, we get $|d^\eta\alpha|^2+|\delta^\eta\alpha|^2=|df\wedge \omega+fd^\eta\omega|^2+|\nabla f\lrcorner\omega+f\delta^\eta\omega|^2= |\nabla f|^2|\omega|^2$, and hence that
\begin{eqnarray*}
t_1^n\int_0^{t_1}\int_{\partial M} t^{-n-1}|\omega|^2v(t,y)dt\,{\rm dvol}_{\partial M}&\leq& t_1^n\int_M t^{-n-1}|\omega|^2 {\rm dvol}_M\\
&\leq & 4 t_1^n\int_0^{t_0}\int_{\partial M} t^{-n+1}|\nabla f|^2|\omega|^2v(t,y)\,dt{\rm dvol}_{\partial M}\\
&=&4\int_{t_1}^{t_0}\int_{\partial M} \left(\frac{t_1}{t}\right)^{n}t|\nabla f|^2|\omega|^2v(t,y)dt\,{\rm dvol}_{\partial M}\\
&\leq&4\int_{t_1}^{t_0}\int_{\partial M} t|\nabla f|^2|\omega|^2v(t,y)dt\,{\rm dvol}_{\partial M}\\&=&C<\infty
\end{eqnarray*}
where $C$ does not depend on $n$. Hence, the l.h.s. is bounded in $n$ but this cannot be true unless $\omega$ vanishes on $[0,t_1]$. Indeed, assume that $||\omega||_{L^2(g_t)}^2>A$ at some point $t_2\leq t_1$, then by continuity, we have $||\omega||_{L^2(g_t)}^2>\frac{A}{2}$ on some interval $(t_2-\varepsilon, t_2+\varepsilon)$ for some small $\varepsilon>0$. Hence, we write 
\begin{eqnarray*}
C\geq t_1^n\int_0^{t_1} \int_{\partial M}t^{-n-1}|\omega|^2v(t,y)dt {\rm dvol}_N&\geq& t_1^n\frac{A}{2}\int_{t_2-\varepsilon}^{t_2+\varepsilon} t^{-n-1}dt\\
&\geq &\frac{1}{n}\left(\left(\frac{t_1}{t_2-\varepsilon}\right)^n-\left(\frac{t_1}{t_2+\varepsilon}\right)^n\right) \to \infty,
\end{eqnarray*}
as $n\to \infty$ which is a contradiction. Hence $\omega$ vanishes on a tubular neighbourhood. 
\end{proof}
{\bf Step 3:} We now construct a double copy $\widetilde{M}$ of the manifold $M$, and we extend the metric in a smooth way to keep the form $\omega$ magnetic harmonic on $\widetilde{M}$ and such that it vanishes on some open, non-empty subset $U$. Now, one can easily show that we have pointwise $|d\omega|^2+|\delta\omega|^2=|\eta|^2|\omega|^2$ on $\widetilde{M}$. Hence by using \cite[Thm. 3.4.3]{Sch:95}, we deduce that $\omega=0$ on $M$. This ends the proof of Theorem  \ref{thm:dirichletmagnetic}.
\hfill$\square$

\subsection{Existence of a solution}\label{ss:exist}
In this section, we prove the existence of solutions of the boundary value problem in Theorem \ref{thm:magneticboundprob}. For the proof, we need to introduce and show several intermediate results. We follow \cite[Chap. 2]{Sch:95} closely. 

In what follows, we consider a compact manifold $(M^m,g)$ of dimension $m$ with smooth boundary $\partial M$ and denote by $L^2=L^2(\Omega^k(M,\mathbb{C}))$ and by $H^s=H^s(\Omega^k(M,\mathbb{C}) )$ the Sobolev spaces of the set $\Omega^k(M,\mathbb{C})$ for $0\leq k\leq m$, that is the completion of  $\Omega^k(M,\mathbb{C})$ with respect to the norm $||\omega||^2_{H^s}=||\omega||^2_{H^{s-1}}+||\nabla\omega||^2_{H^{s-1}}$, where $\nabla$ is the Levi-Civita connection on $M$. As in \cite{Sch:95}, we define the magnetic absolute and relative cohomologies. For this, we denote by $\iota:\partial M\to M$ the embedding of $\partial M$ into $M$ and define 
$$H^1(\Omega^k_{\rm rel}(M, \mathbb{C})):=\{\omega\in H^1(\Omega^k(M,\mathbb{C}))|\,\, \iota^*\omega=0\,\, \text{on}\,\, \partial M\},$$
and 
$$H^1(\Omega^k_{\rm abs}(M, \mathbb{C})):=\{\omega\in H^1(\Omega^k(M,\mathbb{C}))|\,\, \nu\lrcorner\omega=0\,\, \text{on}\,\, \partial M\}.$$
The magnetic relative and absolute cohomologies are then defined by 
$$\mathcal{H}^{k,\eta}_{\rm rel}(M):=\{\omega\in H^1(\Omega^k(M,\mathbb{C}))|\,\, d^\eta \omega=0,\, \delta^\eta\omega=0, \,\iota^*\omega=0\},$$
and 
$$\mathcal{H}^{k,\eta}_{\rm abs}(M):=\{\omega\in H^1(\Omega^k(M,\mathbb{C}))|\,\, d^\eta \omega=0,\, \delta^\eta\omega=0, \,\nu\lrcorner\omega=0\}.$$
It was shown in \cite[Lem. 3.1]{EGHP:23} that the Hodge star operator $*$ interchanges (up to a sign) $d^\eta$ and $\delta^\eta$ as well as the boundary conditions. Therefore, we deduce that $\mathcal{H}^{k,\eta}_{\rm rel}(M)\simeq\mathcal{H}^{m-k,\eta}_{\rm abs}(M)$. Note that for manifolds with boundary, there is a subtle distinction between harmonic fields and harmonic forms (see \cite[Rem. 2, p. 67]{Sch:95}). In the following, we will prove that these cohomologies are of finite dimension and consist of smooth fields. For this, we define the magnetic Dirichlet integral 
$\mathcal{D}^\eta(\cdot,\cdot)$ as
\begin{align*}
\mathcal{D}^\eta:H^1\times H^1&\longrightarrow\mathbb{C}\\ (\phi,\psi)& \longmapsto\int_M (\langle d^\eta\phi,d^\eta\psi\rangle+\langle \delta^\eta\phi,\delta^\eta\psi\rangle){\rm dvol}_M.
\end{align*}
\begin{lemma}[{Magnetic Gaffney inequality, c.f. \cite[Cor. 2.1.6]{Sch:95} for the non-magnetic version}]
\label{lem:gaffny}
Let $(M^m,g)$ be a compact manifold of dimension $m$ with smooth boundary and let $\omega\in H^1(\Omega^k_{\rm rel}(M, \mathbb{C}))$. Then there exists a constant $C>0$ depending only on the geometry of $M$ and the magnetic potential $\eta$, such that  
$$||\omega||_{H^1}^2\leq C (\mathcal{D}^\eta(\omega,\omega)+||\omega||^2_{L^2}).$$
\end{lemma}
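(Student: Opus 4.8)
The plan is to reduce the magnetic Gaffney inequality to its classical (non-magnetic) counterpart, which we may cite from \cite[Cor. 2.1.6]{Sch:95}, by treating the magnetic contributions as zeroth-order perturbations. The key observation is that $d^\eta=d+i\eta\wedge$ and $\delta^\eta=\delta-i\eta\lrcorner$ differ from the ordinary $d$ and $\delta$ only by algebraic (order-zero) operators, namely wedging with, respectively interior multiplication by, the $1$-form $\eta$. Since $\eta$ is smooth and $M$ is compact, $||\eta||_\infty<\infty$, and pointwise one has $|\eta\wedge\omega|\leq ||\eta||_\infty|\omega|$ and $|\eta\lrcorner\omega|\leq ||\eta||_\infty|\omega|$ for every complex $k$-form $\omega$.

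First I would use the pointwise triangle inequality to bound the non-magnetic differentials by the magnetic ones plus a lower-order term, namely $|d\omega|\leq |d^\eta\omega|+||\eta||_\infty|\omega|$ and $|\delta\omega|\leq |\delta^\eta\omega|+||\eta||_\infty|\omega|$. Squaring, applying $(a+b)^2\leq 2a^2+2b^2$, and integrating over $M$ then yields
$$||d\omega||^2_{L^2}+||\delta\omega||^2_{L^2}\leq 2\,\mathcal{D}^\eta(\omega,\omega)+4||\eta||_\infty^2\,||\omega||^2_{L^2}.$$
Next I would invoke the classical Gaffney inequality. The relative boundary condition defining $H^1(\Omega^k_{\rm rel}(M,\mathbb{C}))$ is $\iota^*\omega=0$, which is precisely the boundary condition under which \cite[Cor. 2.1.6]{Sch:95} supplies a constant $C_0>0$, depending only on the geometry of $M$, such that
$$||\omega||^2_{H^1}\leq C_0\left(||d\omega||^2_{L^2}+||\delta\omega||^2_{L^2}+||\omega||^2_{L^2}\right).$$
Combining the two displays gives $||\omega||^2_{H^1}\leq C\left(\mathcal{D}^\eta(\omega,\omega)+||\omega||^2_{L^2}\right)$ with $C:=C_0\max(2,4||\eta||_\infty^2+1)$, which depends only on the geometry of $M$ and on $\eta$, as required.

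The argument is elementary once the classical inequality is granted, and the only point demanding care is that the magnetic and non-magnetic boundary conditions coincide: the relative condition $\iota^*\omega=0$ is insensitive to the zeroth-order perturbation $i\eta\wedge$, so \cite[Cor. 2.1.6]{Sch:95} applies verbatim to $\omega$. The genuinely hard analytic content, namely the elliptic estimate underlying Gaffney's inequality for forms satisfying relative boundary conditions, is absorbed entirely into the cited result; the magnetic potential enters only as a harmless lower-order correction, so I do not expect any substantial obstacle beyond tracking the constants.
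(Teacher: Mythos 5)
Your proposal is correct and follows essentially the same route as the paper: both proofs bound the non-magnetic Dirichlet integral $\mathcal{D}(\omega,\omega)$ by $\mathcal{D}^\eta(\omega,\omega)$ plus an $L^2$ term (using that $\eta\wedge$ and $\eta\lrcorner$ are bounded zeroth-order operators) and then invoke the classical Gaffney inequality \cite[Cor. 2.1.6]{Sch:95} for the relative boundary condition. The only cosmetic difference is that the paper uses $|d\omega|^2=|d^\eta\omega-i\eta\wedge\omega|^2\leq 2(|d^\eta\omega|^2+|\eta\wedge\omega|^2)$ together with the pointwise identity $|\eta\wedge\omega|^2+|\eta\lrcorner\omega|^2=|\eta|^2|\omega|^2$, which yields a slightly sharper constant than your triangle-inequality-plus-squaring argument.
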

\begin{proof} For any $\omega\in H^1$, we write 
\begin{equation*}
|d\omega|^2=|d\omega+i\eta\wedge\omega-i\eta\wedge\omega|^2
=|d^\eta\omega-i\eta\wedge\omega|^2
\leq 2(|d^\eta\omega|^2+|\eta\wedge\omega|^2).
\end{equation*}
In the same way, we can prove that $|\delta\omega|^2\leq 2(|\delta^\eta\omega|^2+|\eta\lrcorner\omega|^2)$. Therefore, we can deduce that the inequality 
$$\mathcal{D}(\omega,\omega)\leq 2\mathcal{D}^\eta(\omega,\omega)+2||\eta||_\infty^2||\omega||_{L^2}^2$$ 
holds for any $\omega\in H^1$. Here we have used the fact $|\eta\wedge\omega|^2+|\eta\lrcorner\omega|^2=|\eta|^2|\omega|^2$ which holds pointwise. Now, the result follows by the standard Gaffney inequality \cite[Cor. 2.1.6]{Sch:95}.     
\end{proof}

\begin{remark} A direct computation shows that 
\begin{equation}\label{eq:relationdetad}
\mathcal{D}^\eta(\omega,\omega)\leq 2\mathcal{D}(\omega,\omega)+2||\eta||_\infty^2||\omega||_{L^2}^2
\end{equation}
for any $\omega\in H^1$.
\end{remark} 

\begin{theorem}[{c.f. \cite[Thm 2.2.2]{Sch:95} for the non-magnetic version}]
Let $(M^m,g)$ be a compact Riemannian manifold of dimension $m$ with smooth boundary. The space $\mathcal{H}^{k,\eta}_{\rm rel}(M)$ is finite dimensional. 
\end{theorem}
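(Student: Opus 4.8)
The plan is to show that $\mathcal{H}^{k,\eta}_{\rm rel}(M)$ is finite dimensional by exhibiting it as the kernel of a compact perturbation of the identity, using the magnetic Gaffney inequality (Lemma \ref{lem:gaffny}) to control the $H^1$-norm, and then invoking Rellich compactness. First I would observe that $\mathcal{H}^{k,\eta}_{\rm rel}(M)\subseteq H^1(\Omega^k_{\rm rel}(M,\mathbb{C}))$ consists exactly of those $\omega$ with $\iota^*\omega=0$ that satisfy $\mathcal{D}^\eta(\omega,\omega)=0$; indeed $d^\eta\omega=0$ and $\delta^\eta\omega=0$ together are equivalent to the vanishing of the nonnegative quantity $\mathcal{D}^\eta(\omega,\omega)=\|d^\eta\omega\|_{L^2}^2+\|\delta^\eta\omega\|_{L^2}^2$. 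Thus $\mathcal{H}^{k,\eta}_{\rm rel}(M)$ is precisely the $\mathcal{D}^\eta$-null space inside the closed subspace $H^1(\Omega^k_{\rm rel}(M,\mathbb{C}))$.

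Next I would apply the magnetic Gaffney inequality. For any $\omega\in\mathcal{H}^{k,\eta}_{\rm rel}(M)$ we have $\mathcal{D}^\eta(\omega,\omega)=0$, so Lemma \ref{lem:gaffny} yields
\begin{equation*}
\|\omega\|_{H^1}^2\leq C\,\|\omega\|_{L^2}^2.
\end{equation*}
This is the crucial estimate: on the subspace $\mathcal{H}^{k,\eta}_{\rm rel}(M)$ the $H^1$-norm and the $L^2$-norm are equivalent. I would then take the restriction to this subspace of the inclusion map $H^1\hookrightarrow L^2$. By the Rellich--Kondrachov theorem this inclusion is compact, so the closed unit ball of $\mathcal{H}^{k,\eta}_{\rm rel}(M)$ (in the $H^1$-norm) maps to a precompact subset of $L^2$; but by the Gaffney estimate the $L^2$-norm and $H^1$-norm agree up to the constant $C$ on this subspace, so the unit ball is itself precompact in its own norm. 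A normed space whose closed unit ball is compact is finite dimensional, which gives the claim.

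To make the argument clean I would first check that $\mathcal{H}^{k,\eta}_{\rm rel}(M)$ is a closed subspace of $H^1$: it is the intersection of the kernels of the bounded operators $\omega\mapsto d^\eta\omega$ and $\omega\mapsto\delta^\eta\omega$ from $H^1$ to $L^2$ with the closed subspace cut out by the boundary condition $\iota^*\omega=0$, hence closed, and therefore itself a Banach (indeed Hilbert) space in the $H^1$-norm. The compactness argument above then applies verbatim to this Hilbert space, and Riesz's lemma (compact unit ball $\Rightarrow$ finite dimension) completes the proof.

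The main obstacle I anticipate is not the functional-analytic skeleton, which is standard, but ensuring that the Gaffney inequality is genuinely available in the form needed --- in particular that the constant $C$ in Lemma \ref{lem:gaffny} is uniform over the whole subspace and depends only on the geometry of $M$ and on $\eta$, not on the individual form. Since Lemma \ref{lem:gaffny} is already stated with exactly this dependence, this point is handled; the only care required is the observation that $\mathcal{D}^\eta(\omega,\omega)=0$ on the null space so that the inequality collapses to the pure $L^2$-bound. One should also verify that the boundary condition $\iota^*\omega=0$ makes sense and is preserved under $H^1$-limits, which follows from the continuity of the trace map $H^1(M)\to H^{1/2}(\partial M)$; this guarantees closedness of $H^1(\Omega^k_{\rm rel}(M,\mathbb{C}))$ and hence of $\mathcal{H}^{k,\eta}_{\rm rel}(M)$.
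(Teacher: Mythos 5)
Your proof is correct and follows essentially the same route as the paper: both use the magnetic Gaffney inequality (Lemma \ref{lem:gaffny}), which for elements of $\mathcal{H}^{k,\eta}_{\rm rel}(M)$ collapses to $\|\omega\|_{H^1}^2\leq C\|\omega\|_{L^2}^2$, to get equivalence of the $H^1$- and $L^2$-norms on this subspace, and then combine Rellich compactness of $H^1\hookrightarrow L^2$ with Riesz's lemma (compact unit ball implies finite dimension). Your additional verification that the subspace is $H^1$-closed via continuity of the trace map is a point the paper leaves implicit, but it is the same argument.
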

\begin{proof}
We follow the same proof as in \cite[Thm. 2.2.2]{Sch:95}. Let $\omega$ be an element in $\mathcal{H}^{k,\eta}_{\rm rel }(M)$, the magnetic Gaffney inequality gives in particular $||\omega||_{H^1}^2\leq C ||\omega||^2_{L^2}.$ Therefore, the $H^1$ and $L^2$-norms are equivalent on $\mathcal{H}^{k,\eta}_{\rm rel}(M)$. Let $D_1$ denote the unit disk in $\mathcal{H}^{k,\eta}_{\rm rel}(M)$, that is 
$$D_1=\{\omega\in \mathcal{H}^{k,\eta}_{\rm rel}(M)|\,\, ||\omega||_{H^1}\leq 1\}.$$
Since $D_1$ is closed with respect to the $H^1$-topology, it is also closed with respect to the $L^2$-topology. By the Rellich lemma (see, e.g., \cite[Thm 1.3.3(c)]{Sch:95}), the embedding $H^1\hookrightarrow L^2$ is compact and, thus, $D_1$ is relatively compact in $L^2$. But, since it is closed in the $L^2$-topology, it is then compact. Hence the unit disk is compact in $\mathcal{H}^{k,\eta}_{\rm rel}(M)$ and, therefore, of finite dimension. 
\end{proof}

In what follows, we will show the following theorem.
\begin{theorem}\label{thm:smoothcohomology}
Any element in the magnetic relative cohomology group $\mathcal{H}^{k,\eta}_{\rm rel}(M)$ is smooth.  
\end{theorem}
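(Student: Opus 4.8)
The plan is to reduce the regularity of magnetic harmonic fields to the established elliptic regularity theory for the operator $d+\delta$ under relative boundary conditions. The crucial observation is that being a harmonic field is a \emph{first-order} condition: since $\omega \in \mathcal{H}^{k,\eta}_{\rm rel}(M)$ satisfies $d^\eta\omega = 0$ and $\delta^\eta\omega = 0$ weakly, and since $d^\eta = d + i\eta\wedge$ and $\delta^\eta = \delta - i\eta\lrcorner$, I would rewrite these conditions as
\[
d\omega = -i\,\eta\wedge\omega, \qquad \delta\omega = i\,\eta\lrcorner\omega .
\]
Thus the ordinary differentials $d\omega$ and $\delta\omega$ are expressed as zeroth-order expressions in $\omega$ with smooth coefficients (because $\eta$ is smooth). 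In particular, whenever $\omega \in H^s$ one automatically has $d\omega,\delta\omega \in H^s$ together with $\|d\omega\|_{H^s} + \|\delta\omega\|_{H^s} \le C_s \|\omega\|_{H^s}$, the constant depending only on $\eta$ and the geometry of $M$.

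First I would invoke the higher-order Gaffney-type regularity estimate for the relative boundary value problem in the non-magnetic case, namely that any $\omega \in H^s(\Omega^k(M,\CC))$ with $\iota^*\omega = 0$, $d\omega \in H^s$ and $\delta\omega\in H^s$ in fact lies in $H^{s+1}$ and satisfies
\[
\|\omega\|_{H^{s+1}} \le C\big(\|d\omega\|_{H^s} + \|\delta\omega\|_{H^s} + \|\omega\|_{H^s}\big).
\]
This is the higher-regularity counterpart of the magnetic Gaffney inequality, Lemma~\ref{lem:gaffny}, and is precisely Schwarz's regularity theorem for the first-order system $d+\delta$ under the single tangential (relative) boundary condition $\iota^*\omega = 0$; only one boundary condition is needed because $d+\delta$ is first order. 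The key point is that the magnetic potential enters only through lower-order terms, so the principal symbol of the system and the complementing (Lopatinski--Shapiro) condition verified in \cite{Sch:95} for $\iota^*\omega=0$ are unaffected, and the estimate applies verbatim to the ordinary $d,\delta$ of our form.

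The proof is then completed by a bootstrap. Starting from $\omega \in H^1$, the two displayed identities give $d\omega, \delta\omega\in H^1$; the regularity estimate yields $\omega\in H^2$; feeding this back gives $d\omega,\delta\omega\in H^2$ and hence $\omega\in H^3$, and so on. Combining the two estimates at each stage produces $\|\omega\|_{H^{s+1}} \le C'\|\omega\|_{H^s}$, so by induction $\omega\in H^s$ for every $s\in\NN$, and the Sobolev embedding theorem gives $\omega\in C^\infty$. As an alternative viewpoint for the interior, one may note that $\Delta^\eta = \Delta + R$ with $R$ a first-order operator with smooth coefficients, so $\Delta^\eta$ is elliptic with the same principal symbol as the Hodge Laplacian, and interior smoothness follows from standard interior elliptic regularity applied to $\Delta^\eta\omega=0$; only the behaviour near $\partial M$ then requires the estimate above.

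The main obstacle I anticipate is justifying the regularity estimate as a genuine gain of one derivative \emph{up to the boundary}, rather than merely an a priori inequality for already-smooth forms. Interior regularity is routine, but near $\partial M$ one must either cite Schwarz's boundary regularity machinery directly or re-run the argument via tangential difference quotients, using the two identities for $d\omega,\delta\omega$ together with the boundary condition $\iota^*\omega=0$ to recover the normal derivative. The one delicate point to check is that the relative boundary condition, combined with the first-order system, satisfies the coercivity/complementing condition underlying the estimate; but this is insensitive to the zeroth-order magnetic terms and is therefore inherited directly from the non-magnetic case in \cite{Sch:95}.
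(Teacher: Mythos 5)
Your proposal is correct, but it takes a genuinely different route from the paper's. The paper first notes that any $\omega\in\mathcal{H}^{k,\eta}_{\rm rel}(M)$ is a magnetic Dirichlet potential with data $\theta=0$, then applies Proposition \ref{prop:strongsolution} --- proved by tangential difference quotients (Lemma \ref{eq:estimateh1dirichlet}) together with the magnetic Weitzenb\"ock formula to recover the normal derivatives --- to conclude that $\omega\in H^2$ and that $\omega$ solves the second-order problem \eqref{eq:boundaryproblemmagnetic}; smoothness then follows from the Lopatinskii--Shapiro ellipticity of that problem via \cite[Thm. 1.6.2]{Sch:95}, the magnetic terms being lower order. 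You never pass to second order: the rewriting $d\omega=-i\eta\wedge\omega$, $\delta\omega=i\eta\lrcorner\omega$ exhibits $\omega$ as a solution of the \emph{non-magnetic} first-order system $(d,\delta)$ with smooth zeroth-order right-hand side and the $\eta$-independent boundary condition $\iota^*\omega=0$, and you bootstrap a gain-of-one-derivative estimate for that system. This is shorter and conceptually cleaner: at first order the magnetic potential is a zeroth-order perturbation that can be absorbed into the data, whereas the paper must carry it as a first-order perturbation of $\Delta$ and as an $\eta$-dependent natural boundary condition $\iota^*(\delta^\eta\omega)=0$. The price is the key input: the statement that $\omega\in H^s$, $\iota^*\omega=0$, $d\omega,\delta\omega\in H^s$ imply $\omega\in H^{s+1}$ up to the boundary is a true classical theorem, but it is \emph{not} stated in \cite{Sch:95} in this form --- Schwarz obtains boundary regularity precisely through the second-order Dirichlet-potential route that the paper follows --- so you must either import it from elsewhere (Morrey's \emph{Multiple Integrals}, Taylor's \emph{Partial Differential Equations}~I, Ch.\ 5, \S 9, or Csat\'o--Dacorogna--Kneuss) or re-derive it by difference quotients, in which case your argument essentially collapses into the paper's. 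Note also that the paper's heavier machinery is not redundant: Lemma \ref{eq:estimateh1dirichlet} and Proposition \ref{prop:strongsolution} are used again, with $\theta\neq 0$, in the existence part of Theorem \ref{thm:magneticboundprob}, a situation your reduction (special to harmonic fields, where both $d^\eta\omega$ and $\delta^\eta\omega$ vanish) does not cover.
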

To prove this theorem, we need several technical lemmas and propositions. We follow the steps done in \cite[Sect. 2.2 and 2.3]{Sch:95} closely. We begin with the following definition. 
\begin{definition}[Magnetic Dirichlet potential] 
Let $\theta\in L^2$. If there exists $\phi_D\in H^1(\Omega^k_{\rm rel}(M,\mathbb{C}))$ such that
\begin{equation}\label{eq:weakboundaryproblem}
\mathcal{D}^\eta(\phi_D, \hat\xi)=(\theta,\hat\xi)_{L^2}
\end{equation}
for all $\hat\xi\in H^1(\Omega^k_{\rm rel}(M,\mathbb{C}))$, then we call $\phi_D$ a magnetic Dirichlet potential associated with $\theta$.
\end{definition}
Note that magnetic Dirichlet potentials associated with $\theta$ may not exist and are generally not unique, since $\phi_D+\omega$ with $\omega\in \mathcal{H}^{k,\eta}_{\rm rel}(M)$ is again a magnetic Dirichlet potential associated with $\theta$. As we have shown that $\mathcal{H}^{k,\eta}_{\rm rel}(M)$ is finite dimensional, we have the orthogonal decomposition $L^2=\mathcal{H}^{k,\eta}_{\rm rel}(M)\oplus (\mathcal{H}^{k,\eta}_{\rm rel}(M))^{\perp_{L^2}}$.  Now, we set 
$$\widehat{\mathcal{H}^{k,\eta}_{\rm rel}(M)}:=H^1(\Omega^k_{\rm rel}(M, \mathbb{C}))\cap (\mathcal{H}^{k,\eta}_{\rm rel}(M))^{\perp_{L^2}}.$$ In the following, when $\theta$ belongs to some subspace of $L^2$, we show the existence and uniqueness of a Dirichlet potential on $(\mathcal{H}^{k,\eta}_{\rm rel}(M))^{\perp_{L^2}}$.

We prove the following proposition which generalizes \cite[Prop. 2.2.3 and Th. 2.2.4]{Sch:95} to the magnetic setting, 

\begin{proposition}[{c.f. \cite[Prop. 2.2.3, Thm 2.2.4]{Sch:95} for the non-magnetic versions}] \label{lem:dirichletpotential} There exist constants $C, c >0$ such that, for any $\omega\in \widehat{\mathcal{H}^{k,\eta}_{\rm rel}(M)}$,
\begin{equation}\label{eq:ellipdeta}
c||\omega||^2_{H^1}\leq \mathcal{D}^\eta(\omega,\omega)\leq C||\omega||^2_{H^1}.
\end{equation}
In particular, for any $\theta\in (\mathcal{H}^{k,\eta}_{\rm rel}(M))^{\perp_{L^2}}$, there exists a unique differential form $\phi_D\in \widehat{\mathcal{H}^{k,\eta}_{\rm rel}(M)}$ satisfying \eqref{eq:weakboundaryproblem}.
\end{proposition}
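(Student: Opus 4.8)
The plan is to prove the two-sided estimate \eqref{eq:ellipdeta} first, and then deduce the existence and uniqueness of $\phi_D$ by applying the Riesz representation theorem to the form $\mathcal{D}^\eta$ itself, regarded as an inner product on the closed subspace $\widehat{\mathcal{H}^{k,\eta}_{\rm rel}(M)}$. The upper bound is routine: since $d^\eta=d+i\eta\wedge$ and $\delta^\eta=\delta-i\eta\lrcorner$ are first-order operators with smooth, bounded coefficients, one bounds $\mathcal{D}^\eta(\omega,\omega)=||d^\eta\omega||^2_{L^2}+||\delta^\eta\omega||^2_{L^2}$ by $C||\omega||^2_{H^1}$ directly; equivalently, one invokes \eqref{eq:relationdetad} together with the obvious estimate $\mathcal{D}(\omega,\omega)\leq ||\omega||^2_{H^1}$.

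The lower bound (coercivity) is the main obstacle, and I would establish it by a compactness--contradiction argument. Suppose no constant $c>0$ works; then there is a sequence $\omega_n\in\widehat{\mathcal{H}^{k,\eta}_{\rm rel}(M)}$ with $||\omega_n||_{H^1}=1$ and $\mathcal{D}^\eta(\omega_n,\omega_n)\to 0$. By the magnetic Gaffney inequality (Lemma \ref{lem:gaffny}), $1=||\omega_n||^2_{H^1}\leq C(\mathcal{D}^\eta(\omega_n,\omega_n)+||\omega_n||^2_{L^2})$, which forces $\liminf_n||\omega_n||_{L^2}>0$. As $(\omega_n)$ is bounded in $H^1$, after passing to a subsequence I may assume $\omega_n\rightharpoonup\omega$ weakly in $H^1$ and, by Rellich compactness (\cite[Thm 1.3.3(c)]{Sch:95}), $\omega_n\to\omega$ strongly in $L^2$; in particular $||\omega||_{L^2}>0$. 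Since $d^\eta,\delta^\eta:H^1\to L^2$ are bounded, weak $H^1$-convergence gives $d^\eta\omega_n\rightharpoonup d^\eta\omega$ and $\delta^\eta\omega_n\rightharpoonup\delta^\eta\omega$ weakly in $L^2$, so weak lower semicontinuity of the norm combined with $\mathcal{D}^\eta(\omega_n,\omega_n)\to 0$ yields $d^\eta\omega=0$ and $\delta^\eta\omega=0$. The relative boundary condition passes to the limit because the trace map $\iota^*$ is bounded, hence weakly continuous, so $\iota^*\omega=0$ and $\omega\in\mathcal{H}^{k,\eta}_{\rm rel}(M)$. On the other hand each $\omega_n$ lies in the $L^2$-closed subspace $(\mathcal{H}^{k,\eta}_{\rm rel}(M))^{\perp_{L^2}}$, so its $L^2$-limit does too. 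Thus $\omega\in\mathcal{H}^{k,\eta}_{\rm rel}(M)\cap(\mathcal{H}^{k,\eta}_{\rm rel}(M))^{\perp_{L^2}}=\{0\}$, contradicting $||\omega||_{L^2}>0$; this proves \eqref{eq:ellipdeta}.

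For the ``in particular'' statement, I would observe that \eqref{eq:ellipdeta} makes the Hermitian sesquilinear form $\mathcal{D}^\eta$ a genuine inner product on $\widehat{\mathcal{H}^{k,\eta}_{\rm rel}(M)}$ whose induced norm is equivalent to $||\cdot||_{H^1}$, so $(\widehat{\mathcal{H}^{k,\eta}_{\rm rel}(M)},\mathcal{D}^\eta)$ is a Hilbert space. Given $\theta\in(\mathcal{H}^{k,\eta}_{\rm rel}(M))^{\perp_{L^2}}$, the functional $\hat\xi\mapsto(\theta,\hat\xi)_{L^2}$ is bounded on this space, since $|(\theta,\hat\xi)_{L^2}|\leq||\theta||_{L^2}||\hat\xi||_{H^1}$, so the Riesz representation theorem produces a unique $\phi_D\in\widehat{\mathcal{H}^{k,\eta}_{\rm rel}(M)}$ with $\mathcal{D}^\eta(\phi_D,\hat\xi)=(\theta,\hat\xi)_{L^2}$ for all $\hat\xi\in\widehat{\mathcal{H}^{k,\eta}_{\rm rel}(M)}$. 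Finally I would promote this identity to arbitrary test forms using the $L^2$-orthogonal splitting $H^1(\Omega^k_{\rm rel}(M,\mathbb{C}))=\mathcal{H}^{k,\eta}_{\rm rel}(M)\oplus\widehat{\mathcal{H}^{k,\eta}_{\rm rel}(M)}$: for $\hat\xi\in\mathcal{H}^{k,\eta}_{\rm rel}(M)$ both sides of \eqref{eq:weakboundaryproblem} vanish, the left because $d^\eta\hat\xi=\delta^\eta\hat\xi=0$ and the right because $\theta\perp_{L^2}\mathcal{H}^{k,\eta}_{\rm rel}(M)$, so by linearity \eqref{eq:weakboundaryproblem} holds for every $\hat\xi\in H^1(\Omega^k_{\rm rel}(M,\mathbb{C}))$. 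Uniqueness of $\phi_D$ within $\widehat{\mathcal{H}^{k,\eta}_{\rm rel}(M)}$ is immediate from coercivity, since testing the difference of two solutions against itself gives $\mathcal{D}^\eta(\phi_D-\phi_D',\phi_D-\phi_D')=0$.
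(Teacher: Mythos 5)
Your proof is correct, and it reaches the same two-sided estimate and existence/uniqueness statement via the same pillars as the paper (the magnetic Gaffney inequality of Lemma \ref{lem:gaffny}, Rellich compactness, and the $L^2$-orthogonal splitting off of $\mathcal{H}^{k,\eta}_{\rm rel}(M)$), but you organize the coercivity step differently. The paper proves the lower bound in \eqref{eq:ellipdeta} variationally: it minimizes $\mathcal{D}^\eta$ over the unit $L^2$-sphere $S_1$ in $\widehat{\mathcal{H}^{k,\eta}_{\rm rel}(M)}$, extracts a weak-$H^1$/strong-$L^2$ limit $\theta$ of a minimizing sequence, and then invokes its separate Lemma \ref{lem:generalstathilbert} (weak lower semicontinuity of nonnegative Hermitian forms) to conclude $\inf_{S_1}\mathcal{D}^\eta\geq\mathcal{D}^\eta(\theta,\theta)>0$, first obtaining an $L^2$ lower bound and then upgrading it to $H^1$ by Gaffney. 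You instead run a normalization--contradiction argument ($\|\omega_n\|_{H^1}=1$, $\mathcal{D}^\eta(\omega_n,\omega_n)\to 0$) and identify the limit as an element of $\mathcal{H}^{k,\eta}_{\rm rel}(M)\cap(\mathcal{H}^{k,\eta}_{\rm rel}(M))^{\perp_{L^2}}=\{0\}$; this targets the $H^1$ bound directly and only needs weak lower semicontinuity of the $L^2$-norm applied to $d^\eta\omega_n$ and $\delta^\eta\omega_n$, so it bypasses Lemma \ref{lem:generalstathilbert} entirely (a small economy; the paper, however, reuses that lemma's formulation elsewhere in the well-posedness argument). Two points you handled that are genuinely needed and easy to overlook: the weak closedness of $H^1(\Omega^k_{\rm rel}(M,\mathbb{C}))$ so that the boundary condition survives the weak limit, and the $L^2$-closedness of $(\mathcal{H}^{k,\eta}_{\rm rel}(M))^{\perp_{L^2}}$ so that the orthogonality survives the strong limit. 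For the existence step, your use of the Riesz representation theorem for the inner product $\mathcal{D}^\eta$ on $\widehat{\mathcal{H}^{k,\eta}_{\rm rel}(M)}$ is equivalent to the paper's appeal to Lax--Milgram \cite[Cor. 1.5.10]{Sch:95}, since for a Hermitian coercive form the two coincide; your extension of \eqref{eq:weakboundaryproblem} to all of $H^1(\Omega^k_{\rm rel}(M,\mathbb{C}))$ via the decomposition $\hat\xi=\hat\xi_1+\hat\xi_2$ and your uniqueness argument are the same as the paper's (the paper phrases uniqueness through membership in $\mathcal{H}^{k,\eta}_{\rm rel}(M)$ rather than coercivity, but both amount to $\mathcal{D}^\eta(\phi_D-\phi_D',\phi_D-\phi_D')=0$ forcing the difference to vanish).
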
 
Before proving this proposition, we need the following well-known result.
We provide a proof for completeness.
\begin{lemma} \label{lem:generalstathilbert}
    Let $H$ be any complex Hilbert space with a Hermitian product $\langle\cdot,\cdot\rangle_H$ and let $Q:H\times H\to\mathbb{C}$ be a Hermitian form such that $Q$ is continuous and nonnegative, that is $Q(v,v)\geq 0$ for any $v\in H$. If a sequence $(\omega_j)_j$ converges weakly to $\theta$ in $H$, then $\liminf_{j \to \infty} Q(\omega_j,\omega_j)\geq Q(\theta,\theta)$.
\end{lemma}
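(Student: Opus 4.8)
The plan is to exploit the nonnegativity of $Q$ to reduce the inequality to a statement about a single cross term that is controlled by weak convergence. First I would record the basic structural facts: since $Q$ is Hermitian, continuous, and nonnegative, it is a bounded sesquilinear form satisfying $Q(v,u)=\overline{Q(u,v)}$ and $Q(u,u)\geq 0$ for all $u,v\in H$; in particular $Q(\theta,\theta)$ is a nonnegative real number, so $\Re\, Q(\theta,\theta)=Q(\theta,\theta)$.

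The key step is to apply nonnegativity to the difference $\omega_j-\theta$ and expand. Using sesquilinearity and Hermitian symmetry, the inequality $0\leq Q(\omega_j-\theta,\omega_j-\theta)$ becomes
\begin{equation*}
0\leq Q(\omega_j,\omega_j)-Q(\omega_j,\theta)-Q(\theta,\omega_j)+Q(\theta,\theta)
= Q(\omega_j,\omega_j)-2\Re\, Q(\omega_j,\theta)+Q(\theta,\theta),
\end{equation*}
which rearranges to
\begin{equation*}
Q(\omega_j,\omega_j)\geq 2\,\Re\, Q(\omega_j,\theta)-Q(\theta,\theta).
\end{equation*}

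Next I would handle the cross term $Q(\omega_j,\theta)$. For the fixed element $\theta$, the map $u\mapsto Q(u,\theta)$ is a bounded linear functional on $H$, by the continuity (boundedness) of the sesquilinear form $Q$. By the Riesz representation theorem, weak convergence $\omega_j\rightharpoonup\theta$ is precisely the statement that $L(\omega_j)\to L(\theta)$ for every bounded linear functional $L$ on $H$; applying this to $L=Q(\cdot,\theta)$ gives $Q(\omega_j,\theta)\to Q(\theta,\theta)$, and hence $\Re\, Q(\omega_j,\theta)\to Q(\theta,\theta)$. Taking the liminf of both sides of the displayed inequality then yields
\begin{equation*}
\liminf_{j\to\infty} Q(\omega_j,\omega_j)\geq 2\,Q(\theta,\theta)-Q(\theta,\theta)=Q(\theta,\theta),
\end{equation*}
as claimed.

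I do not expect any genuine obstacle here: the argument is the standard proof of weak lower semicontinuity of a nonnegative quadratic form, and all the analytic content is carried by the two inputs already built in, namely nonnegativity (to discard the positive term $Q(\omega_j-\theta,\omega_j-\theta)$) and continuity (to turn weak convergence into convergence of the cross term). The only point requiring a little care is the bookkeeping of conventions — deciding which argument of $Q$ is linear and which is conjugate-linear — so that the functional $Q(\cdot,\theta)$ is genuinely linear (rather than conjugate-linear) and the definition of weak convergence applies verbatim; this is purely notational and does not affect the structure of the proof.
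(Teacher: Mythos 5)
Your proof is correct and follows essentially the same route as the paper: both expand the nonnegative quantity $Q(\omega_j-\theta,\omega_j-\theta)$, discard it, and use the Riesz representation theorem so that weak convergence handles the cross terms. The paper writes the cross terms as $\langle \omega_j, Q^*\theta\rangle_H$ via an explicit operator $Q^*$, while you phrase the same step as applying the bounded linear functional $Q(\cdot,\theta)$; this is only a notational difference.
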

\begin{proof} For any $v\in H$, the form $Q(\cdot,v)$ is linear and continuous, and hence, is an element in $H'$. By the Riesz representation theorem, there exists a unique element that we denote by $Q^*v$ in $H$ such that 
$$Q(u,v)=\langle u,Q^*v\rangle_H,$$
for any $u\in H$. Now, for each $j$, we have that
\begin{eqnarray*}
Q(\omega_j,\omega_j)&=&Q(\omega_j,\theta)+Q(\omega_j-\theta,\omega_j-\theta)+Q(\theta,\omega_j-\theta)\\
&=&\langle\omega_j, Q^*\theta\rangle_H+Q(\omega_j-\theta,\omega_j-\theta)+\overline{\langle \omega_j-\theta,Q^*\theta\rangle}_H\\
&\geq &\langle\omega_j, Q^*\theta\rangle_H+\overline{\langle \omega_j-\theta,Q^*\theta\rangle}_H.
\end{eqnarray*}
By taking the limit as $j\to \infty$, we get that $$\liminf_{j \to \infty} Q(\omega_j,\omega_j)\geq \langle\theta, Q^*\theta\rangle_H=Q(\theta,\theta),$$ 
which is the statement of the lemma.
\end{proof}
\begin{proof}[Proof of Proposition \ref{lem:dirichletpotential}] The right-hand side. of Inequality \eqref{eq:ellipdeta} is a direct consequence of \eqref{eq:relationdetad} and the continuity of $d$ and $\delta$ seen as operators from $H^1$ to $L^2$. 

To show the left-hand side, we let $S_1$ denote the unit $L^2$-sphere in $\widehat{\mathcal{H}^{k,\eta}_{\rm rel}(M)}$, i.e. 
$$S_1=\{\omega\in \widehat{\mathcal{H}^{k,\eta}_{\rm rel}(M)}|\,\, ||\omega||_{L^2}=1 \}.$$ Let $(\omega_i)_i$ be a sequence in $S_1$ such that $\mathcal{D}^\eta(\omega_i,\omega_i)\to \mathop{\rm inf}\limits_{\omega\in S_1}(\mathcal{D}^\eta(\omega,\omega))$. By Lemma \ref{lem:gaffny}, the sequence $(\omega_i)_i$ is $H^1$-bounded, since $(\mathcal{D}^\eta(\omega_i,\omega_i))_i$ is bounded as a convergent sequence.  
Now, the uniform boundedness principle for the Hilbert space $\widehat{\mathcal{H}^{k,\eta}_{\rm rel}(M)}$, and \cite[Cor. 1.5.2]{Sch:95} applied to the Rellich embedding $ \widehat{\mathcal{H}^{k,\eta}_{\rm rel}(M)}\hookrightarrow L^2$ gives that $(\omega_i)_i$ has a subsequence which is weakly convergent in $\widehat{\mathcal{H}^{k,\eta}_{\rm rel}(M)}$ and strongly convergent in $L^2$ with limit $\theta$ in  $\widehat{\mathcal{H}^{k,\eta}_{\rm rel}(M)}$. 
Hence, $||\theta||_{L^2}=1$ (since $||\omega_i||_{L^2}=1$ and the continuity of the norm). Thus, $\theta\neq 0$ and from the definition of $\widehat{\mathcal{H}^{k,\eta}_{\rm rel}(M)}$, we get that $\mathcal{D}^\eta(\theta,\theta)>0$. Now, we apply Lemma \ref{lem:generalstathilbert} to the complex Hilbert space $\widehat{\mathcal{H}^{k,\eta}_{\rm rel}(M)}$ and to the Hermitian form $\mathcal{D}^\eta$ to get that $\mathop{\rm inf}\limits_{\omega\in S_1}(\mathcal{D}^\eta(\omega,\omega))\geq \mathcal{D}^\eta(\theta,\theta)>0$. Hence we deduce that for any $\omega\in \widehat{\mathcal{H}^{k,\eta}_{\rm rel}(M)}$, the inequality
   $$\mathcal{D}^\eta(\omega,\omega)\geq \mathcal{D}^\eta(\theta,\theta)||\omega||_{L^2}^2$$
holds. Thus, by the magnetic Gaffney inequality, we have that
$$||\omega||_{H^1}^2\leq C \left(1+\frac{1}{\mathcal{D}^\eta(\theta,\theta)}\right)\mathcal{D}^\eta(\omega,\omega),$$
giving the left-hand side of \eqref{eq:ellipdeta}. 

Now, we will prove the existence and uniqueness of a Dirichlet potential in $\widehat{\mathcal{H}^{k,\eta}_{\rm rel}(M)}$. The form $\mathcal{D}^\eta$ is a  Hermitian form in the complex Hilbert space $\widehat{\mathcal{H}^{k,\eta}_{\rm rel}(M)}$, which is $H^1$-elliptic by Inequality \eqref{eq:ellipdeta}. Therefore, by the Lax-Milgram Theorem \cite[Cor. 1.5.10]{Sch:95}, for any $\theta\in (\mathcal{H}^{k,\eta}_{\rm rel}(M))^{\perp_{L^2}}$, there exists a differential form $\phi_D\in \widehat{\mathcal{H}^{k,\eta}_{\rm rel}(M)}$ such that  
$$\mathcal{D}^\eta(\phi_D, \hat\xi)=(\theta,\hat\xi)_{L^2}$$
for any $\hat\xi\in \widehat{\mathcal{H}^{k,\eta}_{\rm rel}(M)}$. 

Notice here that this equality still holds for any $\hat\xi\in H^1(\Omega^k_{\rm rel}(M, \mathbb{C}))$: Indeed, with respect to the orthogonal decomposition $$L^2=\mathcal{H}^{k,\eta}_{\rm rel}(M)\oplus (\mathcal{H}^{k,\eta}_{\rm rel}(M))^{\perp_{L^2}},$$
we can write any $\hat\xi\in H^1(\Omega^k_{\rm rel}(M, \mathbb{C}))$ as $\hat\xi=\hat\xi_1+\hat\xi_2$ where $\hat\xi_1\in \mathcal{H}^{k,\eta}_{\rm rel}(M) $ and $\hat\xi_2\in (\mathcal{H}^{k,\eta}_{\rm rel}(M))^{\perp_{L^2}}\cap H^1(\Omega^k_{\rm rel}(M, \mathbb{C}))=\widehat{\mathcal{H}^{k,\eta}_{\rm rel}(M)}$. Then, we compute
\begin{eqnarray*}
 \mathcal{D}^\eta(\phi_D, \hat\xi)&=&\mathcal{D}^\eta(\phi_D, \hat\xi_1)+\mathcal{D}^\eta(\phi_D, \hat\xi_2)\\
 &=&\mathcal{D}^\eta(\phi_D, \hat\xi_2)\\
 &=&(\theta,\hat\xi_2)_{L^2}=(\theta,\hat\xi)_{L^2},
\end{eqnarray*}
because $\theta$ is an element in $(\mathcal{H}^{k,\eta}_{\rm rel}(M))^{\perp_{L^2}}$. 

Finally, we show uniqueness. Let $\phi_D$ and $\phi_D'$ be two Dirichlet potentials associated with $\theta$. Hence, we have  $\mathcal{D}^\eta(\phi_D-\phi_D',\hat\xi)=0$ for any $\hat\xi\in H^1(\Omega^k_{\rm rel}(M, \mathbb{C}))$. Hence taking $\hat\xi=\phi_D-\phi_D'$, we get that $\mathcal{D}^\eta(\hat\xi,\hat\xi)=0$, which means that $\hat\xi=\phi_D-\phi_D'\in \mathcal{H}^{k,\eta}_{\rm rel}(M)$. But since $\phi_D-\phi_D'$ is an element in $(\mathcal{H}^{k,\eta}_{\rm rel}(M))^{\perp_{L^2}}$, we deduce that $\phi_D=\phi_D'$.  
\end{proof}
\begin{remark} We point out that if $\phi_D$ is associated to $\theta$ and $\theta'$, we get that $(\theta-\theta',\hat\xi)_{L^2}=0$ for any $\hat\xi\in H^1(\Omega_{\rm rel}^k(M))$. By the density of $H^1(\Omega_{\rm rel}^k(M))$ in $L^2(\Omega_{\rm rel}^k(M))$, we get that $\theta=\theta'$.
\end{remark}

In what follows, we will show that any solution of \eqref{eq:weakboundaryproblem} associated to some $\theta\in L^2$ is in fact in the Sobolev space $H^2$. 
We have the following lemma. 
\begin{lemma}[{c.f. \cite[Lem. 2.3.1]{Sch:95} for the non-magnetic version}] \label{eq:estimateh1dirichlet} Let $X$ be any real vector field with compact support on $M$ such that $g(X,\nu)=0$ on $\partial M$ and let $\eta$ be a magnetic potential on $M$. We denote by $\psi_t^X$ the flow of $X$ restricted to $t\in [-1,1]\setminus \{0\}$, and by $\Sigma_t^X$ the map given by  $\Sigma_t^X\phi:=\frac{1}{t}((\psi_t^X)^*\phi-\phi)$ for any $\phi$. Then, we have the following.
\begin{enumerate} 
\item \label{part1} There exists a constant $C_1$ (independent of $t$) such that 
\begin{equation}\label{eq:sigmal2}
||\Sigma_t^X\phi||_{L^2}\leq C_1||\phi||_{H^1},
\end{equation}
for any $\phi\in H^1$.
\item  \label{part2} If $\phi_1,\phi_2\in H^1$, then there exists a constant $C_2$ (independent of $t$) such that 
\begin{equation}\label{eq:detesigma}
|\mathcal{D}^\eta(\Sigma_t^X\phi_1,\phi_2)|\leq |\mathcal{D}^\eta(\phi_1,\Sigma_{-t}^X\phi_2)|+C_2||\phi_1||_{H^1}||\phi_2||_{H^1},
\end{equation}
 
\item \label{part3} If $\phi_D\in H^1(\Omega^k_{\rm rel}(M,\mathbb{C}))$ is a magnetic Dirichlet potential associated with some $\theta\in L^2$, i.e. satisfying \eqref{eq:weakboundaryproblem}, then there exists a constant $\hat C$, that depends only on $\eta$ and $\phi_D$ but is independent of $t$, such that 
$$||\Sigma_t^X\phi_D||_{H^1}\leq \hat C.$$
\end{enumerate}
\end{lemma}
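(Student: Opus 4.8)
The plan is to establish all three parts by Nirenberg's method of difference quotients, adapted to the magnetic Dirichlet form $\mathcal{D}^\eta$. The geometric input that drives the argument is the condition $g(X,\nu)=0$ on $\partial M$: it forces $X$ to be tangent to $\partial M$, so that for every $t\in[-1,1]$ the flow $\psi_t^X$ is a diffeomorphism of $M$ preserving $\partial M$. Consequently $(\psi_t^X)^*$ maps $H^1$ to $H^1$ and preserves the relative boundary condition $\iota^*\omega=0$, and the $L^2$-adjoint of $(\psi_t^X)^*$ produces no boundary terms. Throughout I will use that $(\psi_s^X)^*$ is bounded on $L^2$ and on $H^1$ uniformly for $s\in[-1,1]$ and that $s\mapsto(\psi_s^X)^*$ is smooth with $(\psi_0^X)^*=\mathrm{Id}$. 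For Part \ref{part1}, I start from the fundamental theorem of calculus,
\[
(\psi_t^X)^*\phi-\phi=\int_0^t\frac{d}{ds}(\psi_s^X)^*\phi\,ds=\int_0^t(\psi_s^X)^*(\mathcal{L}_X\phi)\,ds,
\]
first for smooth $\phi$ and then for $\phi\in H^1$ by density. Dividing by $t$, taking $L^2$-norms, and using the uniform boundedness of the pullbacks gives $\|\Sigma_t^X\phi\|_{L^2}\le C\|\mathcal{L}_X\phi\|_{L^2}$; since Cartan's formula $\mathcal{L}_X=d\,\iota_X+\iota_X d$ shows $\mathcal{L}_X$ is first order with smooth coefficients, $\|\mathcal{L}_X\phi\|_{L^2}\le C\|\phi\|_{H^1}$, which is \eqref{eq:sigmal2} with $C_1$ independent of $t$.

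For Part \ref{part2}, the idea is to commute $\Sigma_t^X$ through $d^\eta$ and $\delta^\eta$, transfer it to the second argument via its $L^2$-adjoint, and commute it back, collecting only errors controlled by $H^1$-norms. Since $d$ is natural, $(\psi_t^X)^*d=d(\psi_t^X)^*$, so $[d,\Sigma_t^X]=0$ and the only commutator with $d^\eta=d+i\eta\wedge$ comes from the potential: a direct computation gives $[d^\eta,\Sigma_t^X]\phi=-i(\Sigma_t^X\eta)\wedge(\psi_t^X)^*\phi$, a zeroth-order operator bounded in $L^2$ by $C\|\phi\|_{L^2}$ because $\Sigma_t^X\eta$ is uniformly bounded. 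The codifferential is metric-dependent; from $(\psi_t^X)^*\delta_g=\delta_{(\psi_t^X)^*g}(\psi_t^X)^*$ one gets $[\delta^\eta,\Sigma_t^X]\phi=\tfrac{1}{t}(\delta_g-\delta_{(\psi_t^X)^*g})(\psi_t^X)^*\phi+[-i\eta\lrcorner,\Sigma_t^X]\phi$, and since $(\psi_t^X)^*g-g=O(t)$ smoothly, the first summand is a first-order operator with coefficients bounded uniformly in $t$ applied to $(\psi_t^X)^*\phi$, hence bounded in $L^2$ by $C\|\phi\|_{H^1}$. Finally, writing $(\psi_t^X)^*$ as $(\psi_{-t}^X)^*$ composed with a smooth bundle multiplier coming from the Jacobian and the metric distortion, one obtains $(\Sigma_t^X)^*=\Sigma_{-t}^X+R_t$ with $R_t$ uniformly $L^2$-bounded (and $R_t=0$ in the flat translation model). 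Expanding $\mathcal{D}^\eta(\Sigma_t^X\phi_1,\phi_2)$, replacing $d^\eta\Sigma_t^X\phi_1$ and $\delta^\eta\Sigma_t^X\phi_1$ by $\Sigma_t^X d^\eta\phi_1$ and $\Sigma_t^X\delta^\eta\phi_1$ plus the above commutators, shifting $\Sigma_t^X$ onto the second factor by $(\Sigma_t^X)^*=\Sigma_{-t}^X+R_t$, and commuting back through $d^\eta,\delta^\eta$, leaves exactly $\mathcal{D}^\eta(\phi_1,\Sigma_{-t}^X\phi_2)$ together with a sum of terms each bounded by $C\|\phi_1\|_{H^1}\|\phi_2\|_{H^1}$, which is \eqref{eq:detesigma}.

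For Part \ref{part3}, I feed Parts \ref{part1}--\ref{part2} into the magnetic Gaffney inequality (Lemma \ref{lem:gaffny}) together with the weak equation \eqref{eq:weakboundaryproblem}. Because $\psi_t^X$ preserves $\partial M$, both $\Sigma_t^X\phi_D$ and $\Sigma_{-t}^X\Sigma_t^X\phi_D$ lie in $H^1(\Omega^k_{\rm rel}(M,\CC))$, so the latter is an admissible test form. Applying Part \ref{part2} with $\phi_1=\phi_D$ and $\phi_2=\Sigma_t^X\phi_D$, then \eqref{eq:weakboundaryproblem} with Cauchy--Schwarz, and finally Part \ref{part1} applied to $\Sigma_t^X\phi_D$, gives
\[
\mathcal{D}^\eta(\Sigma_t^X\phi_D,\Sigma_t^X\phi_D)\le\bigl|(\theta,\Sigma_{-t}^X\Sigma_t^X\phi_D)_{L^2}\bigr|+C_2\|\phi_D\|_{H^1}\|\Sigma_t^X\phi_D\|_{H^1}\le C_3\|\Sigma_t^X\phi_D\|_{H^1},
\]
where $C_3=C_1\|\theta\|_{L^2}+C_2\|\phi_D\|_{H^1}$ is independent of $t$. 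Inserting this bound and the Part \ref{part1} estimate $\|\Sigma_t^X\phi_D\|_{L^2}\le C_1\|\phi_D\|_{H^1}$ into Lemma \ref{lem:gaffny} yields a quadratic inequality $Y^2\le C\bigl(C_3Y+C_1^2\|\phi_D\|_{H^1}^2\bigr)$ for $Y:=\|\Sigma_t^X\phi_D\|_{H^1}$, whose largest root bounds $Y$ by a constant $\hat C$ depending only on $\eta$, $\theta$ and $\phi_D$, but not on $t$.

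The main obstacle is Part \ref{part2}: one must verify that every commutator and the discrepancy $R_t$ between $(\Sigma_t^X)^*$ and $\Sigma_{-t}^X$ is genuinely controlled by $H^1$-norms uniformly as $t\to0$. The delicate point is the metric-dependence of $\delta^\eta$ and of the $L^2$-adjoint of the pullback: the Jacobian and the difference $g-(\psi_t^X)^*g$ each carry a factor of $t$ that must exactly cancel the $1/t$ in $\Sigma_t^X$, and one must confirm that no boundary terms appear in the adjoint computation — which is guaranteed precisely by the tangency condition $g(X,\nu)=0$ on $\partial M$.
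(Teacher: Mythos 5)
Your proposal is correct and is, in substance, the paper's own argument. Part 1 is the standard fundamental-theorem-of-calculus difference-quotient estimate, which the paper simply quotes from Schwarz; Part 3 coincides with the paper's proof: Part 2 applied with $\phi_1=\phi_D$, $\phi_2=\Sigma_t^X\phi_D$, the weak equation \eqref{eq:weakboundaryproblem} tested against $\Sigma_{-t}^X\Sigma_t^X\phi_D$ (admissible because the flow of a field tangent to $\partial M$ preserves the relative boundary condition), then \eqref{eq:sigmal2} and Lemma \ref{lem:gaffny} --- whether one divides out $\|\Sigma_t^X\phi_D\|_{H^1}$ as the paper does, or solves your quadratic inequality, is immaterial. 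In Part 2, your commutator $[d^\eta,\Sigma_t^X]\phi=-i(\Sigma_t^X\eta)\wedge(\psi_t^X)^*\phi$ is exactly the paper's \eqref{eq:commutatorsigma}; the only genuine divergence is the treatment of the codifferential. The paper never computes $[\delta^\eta,\Sigma_t^X]$: it converts $\delta^\eta$ into $d^\eta$ by Hodge-star conjugation ($d^\eta*=\pm *\delta^\eta$) and runs the same integrated wedge identity on $*\phi_1,*\phi_2$, hiding all metric distortion in the star-defect endomorphism $\widetilde\Sigma_t^X$ of \eqref{eq:differencestar}, whose uniform boundedness is quoted from Schwarz. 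You instead estimate $[\delta^\eta,\Sigma_t^X]$ directly from $(\psi_t^X)^*\delta_g=\delta_{(\psi_t^X)^*g}(\psi_t^X)^*$ and the $O(t)$ smallness of $g-(\psi_t^X)^*g$. Both devices control the same error terms; yours avoids introducing $\widetilde\Sigma_t^X$ at the cost of the (routine) verification that the coefficients of $t^{-1}\bigl(\delta_g-\delta_{(\psi_t^X)^*g}\bigr)$ stay bounded as $t\to 0$.

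One intermediate claim is wrong, though harmlessly so: the $L^2$-adjoint of the difference quotient is $(\Sigma_t^X)^*=-\Sigma_{-t}^X+R_t$, not $+\Sigma_{-t}^X+R_t$. Already in your flat translation model, with $D_tv(x)=\frac{1}{t}(v(x+t)-v(x))$ one has $\int u\,\overline{D_tv}\,dx=-\int (D_{-t}u)\,\overline{v}\,dx$, so under your sign convention $R_t$ is not zero there; correspondingly, the paper's relation \eqref{eq:integralsigmat} places all three terms on the same side of the equality. The slip changes nothing in the end: the leading term after transposition is $\mp\mathcal{D}^\eta(\phi_1,\Sigma_{-t}^X\phi_2)$, and since \eqref{eq:detesigma} --- and every use of it in Part 3 --- involves only absolute values, the stated inequalities follow either way. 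But the identity as you wrote it should be corrected.
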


\begin{proof}
Note that Part \ref{part1} of the lemma is independent of the magnetic potential $\eta$, and is proven in \cite[Lem. 2.3.1, p. 74]{Sch:95}. 

For Part \ref{part2}, we define the bundle endomorphism $\widetilde\Sigma_t:\Omega^k(M,\mathbb{C})\to \Omega^k(M,\mathbb{C})$ by 
\begin{equation}\label{eq:differencestar}
*(\widetilde\Sigma_t^X\omega)=\Sigma_t^X(*\omega)-*(\Sigma_t^X\omega),
\end{equation}
for $\omega\in \Omega^k(M,\mathbb{C})$. Here $*$ is the Hodge star operator on $(M,g)$. It is shown in \cite[Lem. 1.2.4]{Sch:95} that $\widetilde\Sigma_t^X$ is a smooth bundle endomorphism with compact support. Consequently, we have  $||\widetilde\Sigma_t^X\omega||_{H^s}\leq c ||\omega||_{H^s}$ for any $t\in [-1,1]$.  Now, for any two complex forms $\omega_1,\omega_2$, 
we use \eqref{eq:differencestar} to compute
\begin{eqnarray*}
\Sigma_t^X(\omega_1\wedge *\overline\omega_2)-\Sigma_t^X\omega_1\wedge *\overline\omega_2&=&\frac{1}{t}(\psi_t^X)^*(\omega_1\wedge *\overline\omega_2)-\frac{1}{t}(\psi_t^X)^*(\omega_1)\wedge *\overline\omega_2\\
&=&(\psi_t^X)^*(\omega_1)\wedge \Sigma_t^X(*\overline\omega_2)\\
&=&(\psi_t^X)^*(\omega_1)\wedge (\psi_t^X)^*\Sigma_{-t}^X(*\overline\omega_2)\\
&\stackrel{\eqref{eq:differencestar}}{=}&(\psi_t^X)^*(\omega_1\wedge *(\widetilde\Sigma_{-t}^X\overline\omega_2))+(\psi_t^X)^*(\omega_1\wedge *(\Sigma_{-t}^X\omega_2)).
\end{eqnarray*}
In the third equality, we use the fact that $(\psi_t^X)^*\Sigma_{-t}^X=\Sigma_t^X$.
We therefore have (as in \cite[p. 75]{Sch:95}) that
\begin{equation}\label{eq:identitysigmax}
\Sigma_t^X(\omega_1\wedge *\overline\omega_2)=\Sigma_t^X\omega_1\wedge *\overline\omega_2+(\psi_t^X)^*(\omega_1\wedge *(\Sigma_{-t}^X\overline\omega_2))+(\psi_t^X)^*(\omega_1\wedge *(\widetilde\Sigma_{-t}^X\overline\omega_2)).
\end{equation}
Integrating \eqref{eq:identitysigmax} over $M$, using $\int_M(\psi_t^X)^*\beta=\int_M\beta$ for any $\beta\in \Omega^m(M,\mathbb{C})$, and observing the left-hand side of \eqref{eq:identitysigmax} has vanishing integral, yields 
\begin{equation}\label{eq:integralsigmat}
    \int_M \langle \Sigma_t^X\omega_1,\omega_2\rangle {\rm dvol}_M+\int_M \langle \omega_1,\Sigma_{-t}^X\omega_2\rangle {\rm dvol}_M+\int_M \langle \omega_1,\widetilde\Sigma_{-t}^X\omega_2\rangle {\rm dvol}_M=0.
\end{equation}
Next, we use the following identity 
\begin{eqnarray}\label{eq:commutatorsigma}
\Sigma_t^X(d^\eta\cdot)&=&\frac{1}{t}((\psi_t^X)^*(d^\eta\cdot)-d^\eta\cdot)\nonumber\\
&=&\frac{1}{t}(d((\psi_t^X)^*\cdot)+i(\psi_t^X)^*\eta\wedge (\psi_t^X)^*\cdot-d^\eta\cdot)\nonumber\\
&=&\frac{1}{t}(d^\eta((\psi_t^X)^*\cdot)-i\eta\wedge (\psi_t^X)^*\cdot+i(\psi_t^X)^*\eta\wedge (\psi_t^X)^*\cdot-d^\eta\cdot)\nonumber\\
&=&d^\eta(\Sigma_t^X\cdot)+i(\Sigma_t^X\eta)\wedge (\psi_t^X)^*\cdot.
\end{eqnarray}
We take $\omega_1=d^\eta\phi_1$ and $\omega_2=d^\eta\phi_2$ in \eqref{eq:integralsigmat} and use \eqref{eq:commutatorsigma} to get
\begin{eqnarray}\label{eq:detasigma}
\int_M \langle d^\eta(\Sigma_t^X\phi_1),d^\eta\phi_2\rangle {\rm dvol}_M+i\int_M \langle \Sigma_t^X\eta\wedge(\psi_t^X)^*\phi_1,d^\eta\phi_2\rangle {\rm dvol}_M\nonumber\\+\int_M \langle d^\eta\phi_1,d^\eta(\Sigma_{-t}^X\phi_2)\rangle {\rm dvol}_M-i\int_M\langle d^\eta\phi_1,\Sigma_{-t}^X\eta\wedge(\psi_{-t}^X)^*\phi_2\rangle {\rm dvol}_M\nonumber\\
+\int_M \langle d^\eta\phi_1,\widetilde\Sigma_{-t}^X d^\eta\phi_2\rangle {\rm dvol}_M=0.
\end{eqnarray}
In the same way, we apply \eqref{eq:integralsigmat} to $d^\eta(*\phi_1)$ and $d^\eta(*\phi_2)$ and use \eqref{eq:differencestar} to get
\begin{eqnarray}\label{eq:deltasigma}
\int_M \langle d^\eta(*\Sigma_t^X\phi_1),d^\eta(*\phi_2)\rangle {\rm dvol}_M+\int_M \langle d^\eta(*\widetilde\Sigma_t^X\phi_1),d^\eta(*\phi_2)\rangle {\rm dvol}_M\nonumber\\+i\int_M \langle \Sigma_t^X\eta\wedge(\psi_t^X)^*(*\phi_1),d^\eta(*\phi_2)\rangle {\rm dvol}_M+\int_M \langle d^\eta(*\phi_1),d^\eta(*\Sigma_{-t}^X\phi_2)\rangle {\rm dvol}_M\nonumber\\
+\int_M \langle d^\eta(*\phi_1),d^\eta(*\widetilde\Sigma_{-t}^X\phi_2)\rangle {\rm dvol}_M-i\int_M\langle d^\eta(*\phi_1),\Sigma_{-t}^X\eta\wedge(\psi_{-t}^X)^*(*\phi_2)\rangle {\rm dvol}_M\nonumber\\
+\int_M \langle d^\eta(*\phi_1),\widetilde\Sigma_{-t}^X d^\eta(*\phi_2)\rangle {\rm dvol}_M=0.\nonumber\\
\end{eqnarray}
Adding \eqref{eq:detasigma} and \eqref{eq:deltasigma}, and using $d^\eta*=\pm *\delta^\eta$ and the definition of the magnetic Dirichlet integral, we get
\begin{eqnarray*}
\mathcal{D}^\eta(\Sigma_t^X\phi_1,\phi_2)+\mathcal{D}^\eta(\phi_1,\Sigma_{-t}^X\phi_2)=\\
-i\int_M \langle \Sigma_t^X\eta\wedge(\psi_t^X)^*\phi_1,d^\eta\phi_2\rangle {\rm dvol}_M+i\int_M\langle d^\eta\phi_1,\Sigma_{-t}^X\eta\wedge(\psi_{-t}^X)^*\phi_2\rangle {\rm dvol}_M\nonumber\\
-\int_M \langle d^\eta\phi_1,\widetilde\Sigma_{-t}^X d^\eta\phi_2\rangle {\rm dvol}_M
-\int_M \langle d^\eta(*\widetilde\Sigma_t^X\phi_1),d^\eta(*\phi_2)\rangle {\rm dvol}_M\nonumber\\-i\int_M \langle \Sigma_t^X\eta\wedge(\psi_t^X)^*(*\phi_1),d^\eta(*\phi_2)\rangle {\rm dvol}_M
-\int_M \langle d^\eta(*\phi_1),d^\eta(*\widetilde\Sigma_{-t}^X\phi_2)\rangle {\rm dvol}_M\\+i\int_M\langle d^\eta(*\phi_1),\Sigma_{-t}^X\eta\wedge(\psi_{-t}^X)^*(*\phi_2)\rangle {\rm dvol}_M
-\int_M \langle d^\eta(*\phi_1),\widetilde\Sigma_{-t}^X d^\eta(*\phi_2)\rangle {\rm dvol}_M.
\end{eqnarray*}
By the Cauchy-Schwarz inequality and \cite[Lem. 1.3.9]{Sch:95}, we can bound the right-hand side by
\begin{eqnarray*}
\left(\tilde{C}_1||\Sigma_t^X\eta||_{L^2}||\phi_2||_{H^1}+\tilde{C}_2||\Sigma_{-t}^X\eta||_{L^2}||\phi_2||_{H^1}+\tilde{C}_3||\widetilde\Sigma_{-t}^Xd^\eta\phi_2||_{L^2}\right)||\phi_1||_{H^1}\\
+ (\tilde{C}_4||\widetilde\Sigma_t^X\phi_1||_{H^1}+\tilde{C}_5||\Sigma_t^X\eta||_{L^2}||\phi_1||_{H^1})||\phi_2||_{H^1}\\
+\Big(\tilde{C}_6||\widetilde\Sigma_{-t}^X\phi_2||_{H^1}+\tilde{C}_7||\Sigma_{-t}^X\eta||_{L^2}||\phi_2||_{H^1}+\tilde{C}_8||\widetilde\Sigma_{-t}^Xd^\eta(*\phi_2)||_{L^2}\Big)||\phi_1||_{H^1}.
\end{eqnarray*}
Since $||\widetilde\Sigma_t^X\cdot||_{H^s}\leq c||\cdot||_{H^s}$ for any $s$ and any $t$ (and a constant $c$ independent of $s, t$),  and we have Inequality \eqref{eq:sigmal2} for $\eta$, we deduce that all the above terms can be bounded by $\tilde{C}_9 ||\phi_1||_{H^1}||\phi_2||_{H^1}$, where $\tilde{C}_9$ is a constant depending on $\eta$ (but not on $t$).  Therefore, we get that
\begin{eqnarray*}
|\mathcal{D}^\eta(\Sigma_t^X\phi_1,\phi_2)|-|\mathcal{D}^\eta(\phi_1,\Sigma_{-t}^X\phi_2)|&\leq &|\mathcal{D}^\eta(\Sigma_t^X\phi_1,\phi_2)+\mathcal{D}^\eta(\phi_1,\Sigma_{-t}^X\phi_2)|\\&\leq & \tilde{C}_9 ||\phi_1||_{H^1}||\phi_2||_{H^1}.
\end{eqnarray*}

Finally, we prove Part \ref{part3}. First, it is not difficult to check that if $\hat\xi\in H^1(\Omega_{\rm rel}^k(M,\mathbb{C}))$, then $\Sigma_t^X\hat\xi$ is also in $H^1(\Omega_{\rm rel}^k(M,\mathbb{C}))$ for any $t\neq 0$. Now, we apply Inequality \eqref{eq:detesigma} to $\phi_1=\phi_D$  and to $\phi_2=\Sigma_t^X\phi_D$, which are both in $H^1(\Omega_{\rm rel}^k(M,\mathbb{C}))$, to get 
\begin{eqnarray*}
|\mathcal{D}^\eta(\Sigma_t^X\phi_D,\Sigma_t^X\phi_D)|&\leq& |\mathcal{D}^\eta(\phi_D,\Sigma_{-t}^X\Sigma_t^X\phi_D)|+C_2||\phi_D||_{H^1}||\Sigma_t^X\phi_D||_{H^1}\\
&\stackrel{\eqref{eq:weakboundaryproblem}}{=}&|(\theta,\Sigma_{-t}^X\Sigma_t^X\phi_D)_{L^2}|+C_2||\phi_D||_{H^1}||\Sigma_t^X\phi_D||_{H^1}\\
&\stackrel{\eqref{eq:sigmal2}}{\leq} &\left(C_1||\theta||_{L^2}+C_2||\phi_D||_{H^1}\right)||\Sigma_t^X\phi_D||_{H^1},
\end{eqnarray*}
where $C_1,C_2$ are independent of $t$. Using the magnetic Gaffney inequality (Lemma \ref{lem:gaffny}), we deduce that 
\begin{eqnarray*}
||\Sigma_t^X\phi_D||_{H^1}^2&\leq& C\left(\mathcal{D}^\eta(\Sigma_t^X\phi_D,\Sigma_t^X\phi_D)+||\Sigma_t^X\phi_D||_{L^2}^2\right)\\
&\leq & C\left(C_1||\theta||_{L^2}+C_2||\phi_D||_{H^1}+||\Sigma_t^X\phi_D||_{L^2}\right)||\Sigma_t^X\phi_D||_{H^1},
\end{eqnarray*}
which, after simplifying and using \eqref{eq:sigmal2} once more, gives that 
$$||\Sigma_t^X\phi_D||_{H^1}\leq C(C_1||\theta||_{L^2}+C_2||\phi_D||_{H^1}+C_1||\phi_D||_{H^1})=\hat C.$$
This finishes the proof.
\end{proof}

Next, we prove the following proposition. 

\begin{proposition}[{c.f. \cite[Lem 2.3.2 and 2.3.3]{Sch:95} for the non-magnetic versions}] \label{prop:strongsolution}  Let $(M^m,g)$ be a compact manifold of dimension $m$ with smooth boundary. If $\phi_D\in H^1(\Omega^k_{\rm rel}(M,\mathbb{C}))$ is a Dirichlet potential associated with some $\theta\in L^2$, then it is in $H^2(\Omega^k(M, \mathbb{C}))$. Therefore, it is a solution of the boundary problem 
\begin{equation}\label{eq:boundaryproblemmagnetic}
\left\{
\begin{matrix}
	\Delta^\eta\omega=\theta & \text{on $M$,}\\\\
	\iota^*\omega=0,\,\, \iota^*(\delta^\eta\omega)=0 & \text{on $\partial M$.}
\end{matrix}\right.
\end{equation}  
\end{proposition}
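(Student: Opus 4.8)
The plan is to follow the standard elliptic regularity scheme for elliptic boundary value problems, isolating the interior regularity (which is immediate) from the boundary regularity (the substantive part), and then to extract the natural boundary condition from the weak formulation by a single integration by parts. In the interior, testing the weak equation \eqref{eq:weakboundaryproblem} against forms $\hat\xi$ supported away from $\partial M$ shows that $\Delta^\eta\phi_D=\theta$ holds distributionally there; since $\Delta^\eta=d^\eta\delta^\eta+\delta^\eta d^\eta$ has the same principal symbol as the ordinary Hodge Laplacian $\Delta$—the magnetic corrections coming from $i\eta\wedge$ and $-i\eta\lrcorner$ being of lower order—interior elliptic regularity gives $\phi_D\in H^2_{\rm loc}$ away from the boundary. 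All the work is therefore concentrated in a collar of $\partial M$.

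For the tangential regularity near the boundary, I would exploit Part \ref{part3} of Lemma \ref{eq:estimateh1dirichlet}: the uniform bound $||\Sigma_t^X\phi_D||_{H^1}\leq\hat C$ in $t$, valid for every real vector field $X$ tangent to $\partial M$, says that the difference quotients $\Sigma_t^X\phi_D=\frac{1}{t}((\psi_t^X)^*\phi_D-\phi_D)$ form a bounded family in $H^1$. By weak compactness of bounded sets in the Hilbert space $H^1$, a subsequence converges weakly, and its limit must coincide with the Lie derivative $\mathcal{L}_X\phi_D$; hence $\mathcal{L}_X\phi_D\in H^1$ for every such $X$. Since $X$ is tangent to $\partial M$, its flow preserves the boundary and thus the relative condition. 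Choosing a frame of tangential vector fields in the collar $[0,t_0)\times\partial M$ then shows that all second derivatives of $\phi_D$ involving at least one tangential direction lie in $L^2$.

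The only second derivative not yet controlled is the purely normal one, and to recover it I would use the equation together with ellipticity. Writing $\Delta^\eta$ in boundary-normal coordinates on the collar, with metric $dt^2\oplus g_t$, in the form $\Delta^\eta=-\partial_t^2+(\text{first order in }\partial_t)+(\text{tangential second-order operator})$, ellipticity guarantees that the coefficient of $\partial_t^2$ is invertible. Since $\Delta^\eta\phi_D=\theta\in L^2$ and every term other than $\partial_t^2\phi_D$ has already been shown to lie in $L^2$—the tangential second derivatives by the previous step and the first normal derivative because $\phi_D\in H^1$—solving algebraically for $\partial_t^2\phi_D$ yields $\partial_t^2\phi_D\in L^2$. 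Combined with the tangential estimate, this gives $\phi_D\in H^2(\Omega^k(M,\mathbb{C}))$.

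With $\phi_D\in H^2$ in hand, the boundary conditions follow. The condition $\iota^*\phi_D=0$ is built into membership in $H^1(\Omega^k_{\rm rel}(M,\mathbb{C}))$. For the natural condition, I would integrate by parts in $\mathcal{D}^\eta(\phi_D,\hat\xi)$ via the magnetic Green formula; since $\iota^*\hat\xi=0$ for test forms in the relative space, the term carrying $\iota^*\hat\xi$ drops and the identity becomes
\begin{equation*}
(\theta,\hat\xi)_{L^2}=\mathcal{D}^\eta(\phi_D,\hat\xi)=\int_M\langle\Delta^\eta\phi_D,\hat\xi\rangle\,{\rm dvol}_M-\int_{\partial M}\langle\iota^*\delta^\eta\phi_D,\nu\lrcorner\hat\xi\rangle\,{\rm dvol}_{\partial M}.
\end{equation*}
Taking $\hat\xi$ compactly supported in the interior recovers $\Delta^\eta\phi_D=\theta$ almost everywhere, and the remaining boundary integral then vanishes for all $\hat\xi\in H^1(\Omega^k_{\rm rel}(M,\mathbb{C}))$; since $\nu\lrcorner\hat\xi$ ranges freely over $(k-1)$-forms on $\partial M$ as $\hat\xi$ varies in the relative space (the relative condition constrains only the tangential part), we conclude $\iota^*\delta^\eta\phi_D=0$, establishing \eqref{eq:boundaryproblemmagnetic}. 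The main obstacle is the boundary regularity: converting the difference-quotient bound cleanly into tangential $H^2$ control, and checking that the magnetic lower-order terms genuinely leave the normal principal part invertible when solving for $\partial_t^2\phi_D$.
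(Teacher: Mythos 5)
Your proposal is correct and follows essentially the same strategy as the paper: tangential regularity from the uniform difference-quotient bound of Part \ref{part3} of Lemma \ref{eq:estimateh1dirichlet}, recovery of the purely normal second derivative by solving the equation $\Delta^\eta\phi_D=\theta$ for it (the paper does this via the magnetic Weitzenb\"ock formula rather than boundary-normal coordinates, and also records the mixed derivatives $\nabla_{e_j}\nabla_{\tilde\nu}\phi_D$ explicitly through the curvature commutation identity), and extraction of $\iota^*\delta^\eta\phi_D=0$ from Green's formula (the paper uses the explicit test form $f\hat\nu\wedge\delta^\eta\phi_D$ where you invoke surjectivity of $\hat\xi\mapsto\nu\lrcorner\hat\xi$, which amounts to the same thing). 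These variations are cosmetic, so no further comparison is needed.
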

\begin{proof}
We will prove that for any point $p\in M$, there exists a neighborhood $U$ of $p$ in $M$ such that $\phi_D|_U \in H^2(\Omega^k(U, \mathbb{C}))$. 

First, we prove it for a point $p\in M\setminus\partial M$. Let $U$ be a neighborhood of such a $p$ with $U\subset V$ where $V$ is compact and $V\cap \partial M=\emptyset$. Let $f\in C^\infty(M,\mathbb{R})$ be any cut-off function such that $f=1$ on $U$ and ${\rm supp}(f)\subset V$. Let $(e_1,\ldots, e_m)$ be a local orthonormal frame of $V$ and take the vector fields $X_j:=fe_j$ on $M$. Clearly the vector fields $X_j$ satisfy the assumptions of Lemma \ref{eq:estimateh1dirichlet} and, by Part \ref{part3}, we have $||\Sigma_t^{X_j}\phi_D||_{H^1}\leq \hat{C}$, which means that, for any $t_l\in (-1,1)$ with $t_l\to 0$,  the sequence $(\Sigma_{t_l}^{X_j}\phi_D)_{l}$ is bounded in $H^1$. Hence by \cite[Cor. 1.5.2]{Sch:95}, it has an $L^2$-convergent subsequence that we still denote it by $(\Sigma_{t_l}^{X_j}\phi_D)_{l}$, with limit in $H^1$. 
For any $\omega \in H^1$, $\lim_{t \to 0} \Sigma_t^X \omega = \mathcal{L}_X \omega$ (see \cite[Sec. 2.3]{Sch:95}).
So, as $\phi_D\in H^1$, we have that $||\Sigma_{t_l}^{X_j}\phi_D-\mathcal{L}_{X_j}\phi_D||_{L^2}\to 0$ as $l\to \infty$. Hence $\mathcal{L}_{X_j}\phi_D\in H^1$ for any $j$. Hence $\mathcal{L}_{e_j}\phi_D\in H^1(\Omega^k(U, \mathbb{C}))$ and, then, by the Meyers-Serrin theorem \cite[Thm. 1.3.3]{Sch:95}, $\phi_D\in H^2(\Omega^k(U, \mathbb{C}))$. This finishes the proof for a point $p$ in the interior of $M$. 

Next, we fix a point $p\in \partial M$ and take a neighbourhood $U$ of $p$ in $M$ and an orthonormal frame $\{\tilde\nu, e_2,\ldots,e_m\}$ such that $\tilde\nu|_{\partial M}=\nu$, the normal vector field to the boundary, and $e_j|_{\partial M}\in T\partial M$, for $j\geq 2$. We do as before by taking the vector fields $X_j=fe_j$ and a similar argument gives that $\mathcal{L}_{e_j}\phi_D$ (or equivalently $\nabla_{e_j}\phi_D$) is an element in  $H^1(\Omega^k(U, \mathbb{C}))$ for all $j\geq 2$. Hence, it is sufficient to study the differentiability of $\nabla_{\tilde\nu}\phi_D$. First, if we denote by $\Omega_0^k(M)$ the space of smooth forms which have compact support  in the interior $M\setminus\partial M$, then by Green's formula and the fact that $\phi_D\in H^2$ in the interior of $M$, Equation \eqref{eq:weakboundaryproblem} becomes 
$$(\Delta^\eta\phi_D,\hat\xi)_{L^2}=(\theta,\hat\xi)_{L^2},$$
for any $\hat\xi\in \Omega_0^k(M)$. Now, the density of $\Omega_0^k(M)$ in $L^2$ gives that $\Delta^\eta\phi_D=\theta$ in $L^2$. Using the magnetic Weitzenb\"ock formula \cite[Eq. 3.10]{EGHP:23}, we write 
\begin{eqnarray}\label{eq:bochnermagneticphid}
\Delta^\eta\phi_D&=&-\nabla_{\tilde\nu}\nabla_{\tilde\nu}\phi_D+\nabla_{\nabla_{\tilde\nu}\tilde\nu}\phi_D-\sum_{j\geq 2}(\nabla_{e_j}\nabla_{e_j}\phi_D-\nabla_{\nabla_{e_j}e_j}\phi_D)+\mathcal{B}^{[p],\eta}\phi_D\nonumber\\
&&+i(\delta^M\eta) \phi_D-2i\nabla_{\eta}\phi_D+|\eta|^2\phi_D. 
\end{eqnarray}
It is now clear that 
$$
\nabla_{\nabla_{\tilde\nu}\tilde\nu}\phi_D-\sum_{j\geq 2}(\nabla_{e_j}\nabla_{e_j}\phi_D-\nabla_{\nabla_{e_j}e_j}\phi_D)+\mathcal{B}^{[p],\eta}\phi_D+i(\delta^M\eta) \phi_D-2i\nabla_{\eta}\phi_D+|\eta|^2\phi_D$$
is an element in $L^2(\Omega^k(U, \mathbb{C}))$, since $\phi_D\in H^1(\Omega^k(U, \mathbb{C}))$ and $\nabla_{e_j}\phi_D\in H^1(\Omega^k(U,\mathbb{C}))$ as we have previously shown. Moreover, since $\Delta^\eta\phi_D=\theta\in L^2(\Omega^k(U,\mathbb{C}))$, we deduce then from \eqref{eq:bochnermagneticphid} that $\nabla_{\tilde\nu}\nabla_{\tilde\nu}\phi_D\in L^2(\Omega^k(U,\mathbb{C}))$. On the other hand, as the curvature operator $R(X,Y)=[\nabla_X,\nabla_Y]-\nabla_{[X,Y]}$ is a smooth endomorphism on $\Omega^k(U, \mathbb{C})$, we have $R(X,Y)\phi_D\in H^1(\Omega^k(U,\mathbb{C}))$. Hence, by writing
\begin{equation*}
\nabla_{e_j}\nabla_{\tilde\nu}\omega=\nabla_{\tilde\nu}\nabla_{e_j}\omega+\nabla_{[e_j,\tilde\nu]}\omega+R(e_j,\tilde\nu)\omega,
\end{equation*}
we get that $\nabla_{e_j}\nabla_{\tilde\nu}\phi_D\in L^2(\Omega^k(U,\mathbb{C}))$. The result then follows by the Meyers-Serrin theorem. 

To prove the last part, we have already shown that $\Delta\phi_D=\theta\in L^2$. Now, using Green's formula, for any $\hat\xi \in H^1(\Omega^k_{\rm rel}(M,\mathbb{C}))$ we have that
\begin{eqnarray*}
\mathcal{D}^\eta(\phi_D,\hat\xi)&=&\int_M\langle\Delta^\eta\phi_D,\hat\xi\rangle {\rm dvol}_M-\int_{\partial M}\langle\nu\lrcorner d^\eta\phi_D,\iota^*\hat\xi\rangle {\rm dvol}_M+\int_{\partial M}\langle\iota^* \delta^\eta\phi_D,\nu\lrcorner\hat\xi\rangle {\rm dvol}_M\\
&=&\int_M\langle\theta,\hat\xi\rangle {\rm dvol}_M+\int_{\partial M}\langle\iota^* \delta^\eta\phi_D,\nu\lrcorner\hat\xi\rangle {\rm dvol}_{\partial M}.
\end{eqnarray*}
Hence, Equation \eqref{eq:weakboundaryproblem} gives that $\displaystyle\int_{\partial M}\langle\iota^* \delta^\eta\phi_D,\nu\lrcorner\hat\xi\rangle {\rm dvol}_M=0$ for any $\hat\xi \in H^1(\Omega^k_{\rm rel}(M,\mathbb{C}))$. By choosing $\hat\xi$ of the form $f\hat\nu\wedge \delta^\eta\phi_D\in H^1(\Omega_{\rm rel}(M,\mathbb{C}))$ where $\hat\nu$ is a smooth local extension of $\nu$ in some local neighborhood of the boundary and $f$ is a cut-off function equal to $1$ on $\partial M$, we get that $\iota^* \delta^\eta\phi_D=\nu\lrcorner\hat\xi$ and, thus, $\iota^* \delta^\eta\phi_D=0$ on $\partial M$. This finishes the proof. 
\end{proof}

Now, we come to the proof of Theorem \ref{thm:smoothcohomology}.

\begin{proof} [Proof of Theorem \ref{thm:smoothcohomology}] Let us consider the boundary problem \eqref{eq:boundaryproblemmagnetic}. It is not so difficult to check that this boundary problem is elliptic in the sense of Lopatinskii-Shapiro, since the principal symbol of $\Delta^\eta$ is the same as the one of the Hodge Laplacian $\Delta$ without magnetic potential and the principal symbol of $\delta^\eta$ is the same as the one of $\delta$. Hence by \cite[Lem. 1.6.5]{Sch:95}, it is elliptic. Therefore, by \cite[Thm. 1.6.2]{Sch:95}, for  any smooth $\theta$, any solution of \eqref{eq:boundaryproblemmagnetic} in $H^s$ with $s\geq 2$ is smooth. 

Consider now an element $\omega$ in $\mathcal{H}^{k,\eta}_{\rm rel}(M)$, then $\omega$ is a Dirichlet potential associated with $\theta=0$, as $\mathcal D^\eta(\omega,\hat\xi)=0$ for any $\hat\xi\in H^1(\Omega^k_{\rm rel}(M,\mathbb{C}))$. Therefore, by Proposition \ref{prop:strongsolution}, it is an element in $H^2$. Thus $\omega$ is smooth. This finishes the proof.  
\end{proof}

\subsection{Proof of Theorem \ref{thm:magneticboundprob}} 
We now combine the results of Sections \ref{ss:unique} and \ref{ss:exist} to prove Theorem \ref{thm:magneticboundprob} from which we have well-posedness of \eqref{eq:magneticsteklovsolut} and hence of the magnetic Steklov problem \eqref{eq:steklovmagnetic}.

\begin{proof}[Proof of Theorem \ref{thm:magneticboundprob}]
We denote by $\Omega_0^k(M,\mathbb{C})$ the space of complex differential forms that vanish on the boundary and by $\mathcal{H}^{k,\eta}(M)=\{\omega\in H^1|\,\, d^\eta\omega=0,\delta^\eta\omega=0\}$. We know from Theorem \ref{thm:dirichletmagnetic} that $\Omega_0^k(M,\mathbb{C})\cap \mathcal{H}^{k,\eta}(M)=0$.  

Next, we show that $\mathcal{D}^\eta$ is $H^1$-elliptic on $H^1(\Omega_0^k(M,\mathbb{C}))$, i.e. that
\begin{equation}\label{eq:ellipticsteklov}
c||\omega||^2_{H^1}\leq \mathcal{D}^\eta(\omega,\omega)\leq C||\omega||^2_{H^1},
\end{equation}
for any $\omega\in H^1(\Omega_0^k(M,\mathbb{C}))$. For this, we follow the same steps as in the proof of Proposition \ref{lem:dirichletpotential}. First, notice that the right-hand side of Inequality \eqref{eq:ellipticsteklov} is a consequence of the continuity of $\mathcal{D}^\eta$. To prove the left-hand side, we consider the unit $L^2$-sphere in $H^1(\Omega_0^k(M,\mathbb{C}))$ and we construct a sequence $(\hat\omega_j)_j$ in $S_1$ such that $\mathcal{D}^\eta(\hat\omega_j,\hat\omega_j)\to \mathop{\rm inf}\limits_{\hat\omega\in S_1}(\mathcal{D}^\eta(\hat\omega,\hat\omega))$. By the Rellich lemma (see, e.g., \cite[Thm 1.3.3(c)]{Sch:95}), $H^1(\Omega_0^k(M,\mathbb{C}))\hookrightarrow L^2(\Omega^k(M,\mathbb{C}))$ is compact, so the sequence $(\hat\omega_j)_j$ has a convergent subsequence in $L^2$ with limit $\hat\theta$ in $H^1(\Omega_0^k(M,\mathbb{C}))$. The form $\hat\theta$ has $L^2$-norm equal to $1$ and $\mathcal{D}^\eta(\hat\theta,\hat\theta)>0$. The latter property comes from the fact that, if $\mathcal{D}^\eta(\hat\theta,\hat\theta)=0$, then one would get $d^\eta\hat\theta=\delta^\eta\hat\theta=0$ on $M$ and $\hat\theta=0$ on $\partial M$. Hence, $\hat\theta$ would be an element in the magnetic relative cohomology $\mathcal{H}^{k,\eta}_{\rm rel}(M)$ and, therefore, it should be smooth by Theorem \ref{thm:smoothcohomology} so must be zero, which is a contradiction to $||\hat\theta||_{L^2}=1$. The rest of the proof is completely analogous to the proof given in Proposition \ref{lem:dirichletpotential}. Hence Inequality \eqref{eq:ellipticsteklov} is established. 

As the form $\mathcal{D}^\eta$ is $H^1$-elliptic on $H^1(\Omega_0^k(M,\mathbb{C}))$, we apply the Lax-Milgram theorem for given $\theta\in \Omega^k(M,\mathbb{C})$, to deduce the existence of $\phi_0\in H^1(\Omega_0^k(M,\mathbb{C}))$ such that 
$$\mathcal{D}^\eta(\phi_0,\hat\xi)=(\theta,\hat\xi)_{L^2}$$
for any $\hat\xi\in H^1(\Omega_0^k(M,\mathbb{C}))$. Now, since $\phi_0$ vanishes on the boundary, we deduce from Proposition  \ref{prop:strongsolution} that it is in $ H^2(\Omega_0^k(M,\mathbb{C}))$ and, for any $\theta\in \Omega^k(M,\mathbb{C})$, is a solution of the problem 
\begin{equation}\label{eq:steklohomog}
\Delta^\eta\phi_0=\theta\quad\text{and}\quad \phi_0|_{\partial M}=0.
\end{equation}

Finally, we come back to Problem \eqref{eq:magneticsteklovsolut} and show the existence of solutions. Let $\varphi\in \Omega^k(M, \mathbb{C})$ and $\psi\in \Omega^k(M, \mathbb{C})|_{\partial M}$. We consider $\tilde\psi$ any extension of $\psi$ to all of $M$ and set $\theta:=\Delta^\eta\tilde\psi-\varphi$. We denote by $\phi_0$ the solution of \eqref{eq:steklohomog} for such $\theta$, and set $\omega:=\tilde\psi-\phi_0$. Then $\omega$ satisfies \eqref{eq:magneticsteklovsolut}.
\end{proof}

\section{Obstruction to the Diamagnetic Inequality}\label{ss:diamagnetic}
The main goal of this section is to prove that an analogue of the Diamagnetic Inequality does not hold in general for the magnetic Steklov operator on differential forms.

\subsection{Spectral properties of the magnetic Steklov operator on differential forms}
By following \cite{RS:12}, we present some spectral properties of this operator.

\begin{proposition}
    Let $(M^m,g)$ be a compact Riemannian manifold of dimension $m$ with smooth boundary and let $\eta$ be a magnetic potential. Then 
    \begin{enumerate}
        \item[(a)] The operator $T^{[k],\eta}$  is non-negative and self-adjoint. 
        \item[(b)] The kernel of $T^{[k],\eta}$ is the magnetic absolute cohomology. 
    \end{enumerate}
\end{proposition}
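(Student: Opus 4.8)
The plan is to reduce both statements to the magnetic Green's formula already used in the proof of Proposition \ref{prop:strongsolution}: for $\alpha,\beta\in H^1$,
$$\mathcal{D}^\eta(\alpha,\beta)=\int_M\langle\Delta^\eta\alpha,\beta\rangle\,{\rm dvol}_M-\int_{\partial M}\langle\nu\lrcorner d^\eta\alpha,\iota^*\beta\rangle\,{\rm dvol}_{\partial M}+\int_{\partial M}\langle\iota^*\delta^\eta\alpha,\nu\lrcorner\beta\rangle\,{\rm dvol}_{\partial M}.$$
First I would apply this with $\alpha=\hat\omega$ and $\beta=\hat\omega'$, the $\eta$-harmonic extensions of two boundary forms $\omega,\omega'\in\Omega^k(\partial M,\CC)$. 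Since $\Delta^\eta\hat\omega=0$ and the extensions satisfy $\iota^*\hat\omega'=\omega'$ together with $\nu\lrcorner\hat\omega'=0$, the interior integral and the last boundary integral drop out, and recalling $T^{[k],\eta}\omega=-\nu\lrcorner d^\eta\hat\omega$ (a tangential form, since $\nu\lrcorner\nu\lrcorner=0$, hence genuinely a form on $\partial M$) I obtain the key identity
$$\int_{\partial M}\langle T^{[k],\eta}\omega,\omega'\rangle\,{\rm dvol}_{\partial M}=\mathcal{D}^\eta(\hat\omega,\hat\omega').$$

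Part (a) is then formal. Self-adjointness follows from the Hermitian symmetry $\mathcal{D}^\eta(\hat\omega,\hat\omega')=\overline{\mathcal{D}^\eta(\hat\omega',\hat\omega)}$ of the magnetic Dirichlet integral: the key identity, together with its version with $\omega,\omega'$ interchanged and conjugated, gives $\int_{\partial M}\langle T^{[k],\eta}\omega,\omega'\rangle\,{\rm dvol}_{\partial M}=\int_{\partial M}\langle\omega,T^{[k],\eta}\omega'\rangle\,{\rm dvol}_{\partial M}$. Non-negativity follows by taking $\omega'=\omega$, since then $\int_{\partial M}\langle T^{[k],\eta}\omega,\omega\rangle\,{\rm dvol}_{\partial M}=\mathcal{D}^\eta(\hat\omega,\hat\omega)=\int_M(|d^\eta\hat\omega|^2+|\delta^\eta\hat\omega|^2)\,{\rm dvol}_M\geq0$.

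For part (b) I would use the same identity with $\omega'=\omega$. As $T^{[k],\eta}$ is non-negative and self-adjoint, $\omega\in\ker T^{[k],\eta}$ is equivalent to $\mathcal{D}^\eta(\hat\omega,\hat\omega)=0$, i.e. to $d^\eta\hat\omega=\delta^\eta\hat\omega=0$ on $M$; combined with the condition $\nu\lrcorner\hat\omega=0$ that the harmonic extension always satisfies, this says exactly that $\hat\omega\in\mathcal{H}^{k,\eta}_{\rm abs}(M)$. Conversely, every $\zeta\in\mathcal{H}^{k,\eta}_{\rm abs}(M)$ is $\eta$-harmonic (because $d^\eta\zeta=\delta^\eta\zeta=0$ forces $\Delta^\eta\zeta=0$) and has $\nu\lrcorner\zeta=0$, so by the uniqueness in Theorem \ref{thm:magneticboundprob} it is the $\eta$-harmonic extension of $\iota^*\zeta$, whence $T^{[k],\eta}(\iota^*\zeta)=-\nu\lrcorner d^\eta\zeta=0$. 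The maps $\omega\mapsto\hat\omega$ and $\zeta\mapsto\iota^*\zeta$ are mutually inverse by uniqueness of the harmonic extension, yielding the identification of $\ker T^{[k],\eta}$ with $\mathcal{H}^{k,\eta}_{\rm abs}(M)$.

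Once the Green's formula is in hand the whole argument is essentially formal; the only points needing care are checking that $\nu\lrcorner d^\eta\hat\omega$ is tangential so that the boundary pairing is well defined, and verifying that $\omega\mapsto\hat\omega$ is a genuine bijection onto $\mathcal{H}^{k,\eta}_{\rm abs}(M)$ rather than merely injective or surjective — both of which are supplied by the well-posedness established in Theorem \ref{thm:magneticboundprob}.
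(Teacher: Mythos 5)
Your proposal is correct and takes essentially the same route as the paper: the paper's proof derives exactly your key identity $\int_{\partial M}\langle T^{[k],\eta}\omega_1,\omega_2\rangle\,{\rm dvol}_{\partial M}=\mathcal{D}^\eta(\hat\omega_1,\hat\omega_2)$ (via the magnetic Stokes formula, which is the same integration-by-parts computation you invoke in packaged Green's-formula form), and then reads off self-adjointness, non-negativity, and the kernel statement from it. Your explicit treatment of part (b) — the equivalence $\mathcal{D}^\eta(\hat\omega,\hat\omega)=0 \Leftrightarrow \hat\omega\in\mathcal{H}^{k,\eta}_{\rm abs}(M)$ and the bijection $\omega\mapsto\hat\omega$ justified by uniqueness of the harmonic extension — simply spells out what the paper asserts in a single sentence.
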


\begin{proof} The proof follows the same steps as in \cite[Thm. 11]{RS:12}. 
For part (a), let $\omega_1,\omega_2\in \Omega^k(\partial M,\mathbb{C})$ and denote by $\hat\omega_1,\hat\omega_2$ their $\eta$-harmonic extensions with respect to Problem \eqref{eq:steklovmagnetic}. By the magnetic Stokes formula, 
$$\int_M\langle d^\eta\hat\omega_1,\hat\omega_2\rangle {\rm dvol}_M=\int_M\langle \hat\omega_1,\delta^\eta\hat\omega_2\rangle {\rm dvol}_M-\int_{\partial M}\langle \iota^*\hat\omega_1,\nu\lrcorner\hat\omega_2\rangle {\rm dvol}_M.$$
 So we have that
 \begin{eqnarray*}
 \int_{\partial M} \langle T^{[k],\eta}\omega_1,\omega_2\rangle {\rm dvol}_M&=&-\int_{\partial M} \langle \nu\lrcorner d^\eta\hat\omega_1,\hat\omega_2\rangle {\rm dvol}_M\\
 &=&-\int_{\partial M} \overline{\langle \iota^*\hat\omega_2,\nu\lrcorner d^\eta\hat\omega_1\rangle} {\rm dvol}_M\\
 &=&\int_M\langle d^\eta\hat\omega_1,d^\eta\hat\omega_2\rangle {\rm dvol}_M-\int_M\langle\delta^\eta d^\eta\hat\omega_1,\hat\omega_2\rangle {\rm dvol}_M\\
 &=&\int_M\langle d^\eta\hat\omega_1,d^\eta\hat\omega_2\rangle {\rm dvol}_M+\int_M\langle d^\eta\delta^\eta \hat\omega_1,\hat\omega_2\rangle {\rm dvol}_M\\
 &=&\int_M\langle d^\eta\hat\omega_1,d^\eta\hat\omega_2\rangle {\rm dvol}_M+\int_M\langle \delta^\eta \hat\omega_1,\delta^\eta\hat\omega_2\rangle {\rm dvol}_M,
 \end{eqnarray*}
where we use that $\hat\omega_1$ is $\eta$-harmonic in the fourth equality, and that $\nu \lrcorner\hat\omega_2 = 0$ on $\partial M$ in the fifth equality. This shows that $T^{[k],\eta}$ is self-adjoint and nonnegative. Moreover, its kernel is given by the magnetic absolute cohomology. 
\end{proof}

The following theorem asserts that the magnetic Steklov operator has discrete spectrum.

\begin{theorem}\label{thm:discretespectrum}
Let $(M^m,g)$ be a compact Riemannian manifold of dimension $m$ with smooth boundary and let $\eta$ be a magnetic potential.
Then $T^{[k],\eta}$ has a discrete non-negative spectrum,
$$\sigma_{1,k}^\eta(M)\leq \sigma_{2,k}^\eta(M)\leq \ldots \nearrow +\infty,$$
with finite multiplicities, and the magnetic Steklov eigen $k$-forms restricted to the boundary form an orthonormal basis of $L^2(\Omega^k(\pam, \CC))$.
\end{theorem}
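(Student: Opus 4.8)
The plan is to realise the magnetic Steklov operator $T^{[k],\eta}$ as a nonnegative self-adjoint operator with compact resolvent, so that the spectral theorem for such operators yields the desired eigenvalue list. The key device is to transfer the problem from the interior to the boundary via the $\eta$-harmonic extension map. First I would recall from Theorem~\ref{thm:magneticboundprob} that for each $\psi\in\Omega^k(\partial M,\CC)$ there is a unique $\eta$-harmonic extension $\hat\psi\in\Omega^k(M,\CC)$ solving \eqref{eq:steklovmagnetic}; denote by $E:\psi\mapsto\hat\psi$ this extension operator. The previous proposition already shows that $T^{[k],\eta}$ is nonnegative and symmetric on smooth forms, with the quadratic form identity
\begin{equation*}
\langle T^{[k],\eta}\psi,\psi\rangle_{L^2(\partial M)}=\int_M\left(|d^\eta\hat\psi|^2+|\delta^\eta\hat\psi|^2\right){\rm dvol}_M=\mathcal{D}^\eta(\hat\psi,\hat\psi).
\end{equation*}

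Next I would set up the variational framework. The natural form domain is the image under $\iota^*$ of the space of $H^1$ $\eta$-harmonic fields satisfying $\nu\lrcorner\hat\psi=0$, which one identifies with the Sobolev space $H^{1/2}(\Omega^k(\partial M,\CC))$ by standard trace theory for the elliptic boundary problem \eqref{eq:steklovmagnetic}; the extension map $E$ is bounded $H^{1/2}(\partial M)\to H^1(M)$ precisely because of the $H^1$-ellipticity estimates established in the existence section (the magnetic Gaffney inequality, Lemma~\ref{lem:gaffny}, together with Proposition~\ref{lem:dirichletpotential}). The Steklov quadratic form $Q(\psi,\psi)=\mathcal{D}^\eta(E\psi,E\psi)$ is then a closed, nonnegative, densely defined Hermitian form on $L^2(\Omega^k(\partial M,\CC))$ whose form domain $H^{1/2}(\partial M)$ embeds compactly into $L^2(\partial M)$ by the Rellich--Kondrachov theorem on the closed manifold $\partial M$. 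By the standard correspondence between closed nonnegative forms and self-adjoint operators, $Q$ is the form of a self-adjoint operator, which one checks coincides with $T^{[k],\eta}$.

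The spectral conclusion then follows from a general principle: a nonnegative self-adjoint operator whose form domain embeds compactly into the ambient Hilbert space has purely discrete spectrum consisting of eigenvalues of finite multiplicity accumulating only at $+\infty$, with an associated orthonormal eigenbasis of $L^2(\partial M)$. Concretely, I would apply the min-max characterisation
\begin{equation*}
\sigma_{j,k}^\eta(M)=\min_{\substack{V\subset H^{1/2}(\partial M)\\\dim V=j}}\ \max_{0\neq\psi\in V}\frac{\mathcal{D}^\eta(E\psi,E\psi)}{\|\psi\|_{L^2(\partial M)}^2},
\end{equation*}
and use the compactness of the embedding $H^{1/2}(\partial M)\hookrightarrow L^2(\partial M)$ to conclude that these min-max values are attained, form a nondecreasing sequence tending to $+\infty$, and have finite-dimensional eigenspaces. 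The main obstacle, and the step demanding the most care, is verifying that the extension operator $E$ is bounded from $H^{1/2}(\partial M)$ into $H^1(M)$ and that the resulting quadratic form is closed on $L^2(\partial M)$; this rests on the elliptic regularity and trace estimates for the magnetic boundary problem, which in turn lean on the Lopatinskii--Shapiro ellipticity already noted in the proof of Theorem~\ref{thm:smoothcohomology} (the principal symbols of $d^\eta,\delta^\eta,\Delta^\eta$ agreeing with their nonmagnetic counterparts). Everything downstream---the min-max, the accumulation at infinity, and the orthonormal eigenbasis---is then a routine consequence of abstract spectral theory.
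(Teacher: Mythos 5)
Your overall strategy---realising $T^{[k],\eta}$ as the self-adjoint operator associated with the quadratic form $Q(\psi,\psi)=\mathcal{D}^\eta(E\psi,E\psi)$ on a trace space, and deducing discreteness, finite multiplicities and the eigenbasis from compactness of the embedding of the form domain into $L^2(\Omega^k(\pam,\CC))$---is the same as the paper's, which adapts Arendt--Mazzeo. The structural difference is that you identify the form domain with $H^{1/2}(\Omega^k(\pam,\CC))$ via trace theory for forms and use compactness of $H^{1/2}(\pam)\hookrightarrow L^2(\pam)$, whereas the paper works with the abstract trace space $V=\iota^*H^1(\Omega^k_{\rm abs}(M,\CC))$, normed by the $H^1(M)$-norm of the $\eta$-harmonic representative (using the decomposition $H^1(\Omega^k_{\rm abs})=H^1(\Omega^k_0)\oplus H^1_{\eta,\rm har}(\Omega^k_{\rm abs})$), and gets compactness from the compactness of the trace operator $H^1_{\eta,\rm har}(\Omega^k_{\rm abs}(M,\CC))\to L^2(\Omega^k(\pam,\CC))$; this sidesteps any trace theorem for differential forms.

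There is, however, a genuine gap at exactly the step you flag as the main obstacle: the results you cite do not yield the boundedness of $E$ or the closedness of $Q$. The magnetic Gaffney inequality (Lemma~\ref{lem:gaffny}), even granted for absolute boundary conditions, gives
$$\|E\psi\|^2_{H^1(M)}\leq C\left(\mathcal{D}^\eta(E\psi,E\psi)+\|E\psi\|^2_{L^2(M)}\right),$$
where the error term is the \emph{interior} $L^2$-norm of the extension, and nothing you invoke controls that term by boundary data. Proposition~\ref{lem:dirichletpotential} cannot be used for this: it holds on $\widehat{\mathcal{H}^{k,\eta}_{\rm rel}(M)}$, i.e.\ for forms satisfying the \emph{relative} condition $\iota^*\omega=0$ and orthogonal to the relative cohomology, whereas $E\psi$ satisfies the absolute condition $\nu\lrcorner E\psi=0$ and has nonzero tangential trace; moreover an unconditional bound $c\|\omega\|_{H^1}^2\leq\mathcal{D}^\eta(\omega,\omega)$ is impossible whenever $\mathcal{H}^{k,\eta}_{\rm abs}(M)\neq 0$, since $Q$ vanishes on traces of absolute harmonic fields (the kernel of $T^{[k],\eta}$). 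The missing ingredient is precisely the estimate
$$\|E\psi\|^2_{L^2(M)}\leq C\left(\mathcal{D}^\eta(E\psi,E\psi)+\|\psi\|^2_{L^2(\pam)}\right),$$
i.e.\ the $(V,L^2(\Omega^k(\pam,\CC)))$-ellipticity that is the heart of the paper's Proposition~\ref{prop:assocform}. The paper obtains it from Ehrling's inequality (Lemma~\ref{lem: Ehrling ineq}), applied to the compact embedding $H^1_{\eta,\rm har}(\Omega^k_{\rm abs}(M,\CC))\hookrightarrow L^2(\Omega^k(M,\CC))$ and the trace map, whose injectivity on harmonic fields is the uniqueness Theorem~\ref{thm:dirichletmagnetic}; a compactness-contradiction argument based on that same uniqueness theorem would also do. Once this estimate is in place (and, for the upper bound on $Q$, the energy-minimising property of the $\eta$-harmonic extension together with a bounded right inverse of the trace), the form norm is equivalent to the $H^{1/2}$-norm on your domain, $Q$ is closed, $E$ is bounded, and the rest of your argument (identification of the associated operator with $T^{[k],\eta}$, min-max, accumulation at $+\infty$, orthonormal eigenbasis) goes through as you describe.
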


To prove Theorem \ref{thm:discretespectrum}, we adapt the work of Arendt and Mazzeo \cite{AM:07} as follows.

\begin{definition}
Let $V$ be a Hilbert space that is densely and continuously embedded in a Hilbert space $H$. A symmetric, continuous, bilinear form $\mathcal{B}:V\times V\mapsto \RR$ is said to be $(V,H)$-elliptic if there exist $\beta\in \RR$ and $\alpha>0$ such that
\[
\mathcal{B}(x,x)+\beta\norm{x}{H}^2\geq \alpha\norm{x}{V}^2\quad \forall \, x\in V.
\]    
\end{definition}
\noindent \textbf{Note. }To each such form is associated a self-adjoint operator, which further has a compact resolvent if and only if the inclusion $V \hookrightarrow H$ is compact.

We use the following weak formulations:
\begin{itemize}
\item For $\omega\in H^1(\Omega^k_{\rm abs}(M, \mathbb{C}))$, we say that $\Delta^\eta \omega =0$ if
\[
\int_M \langle d^\eta \omega, d^\eta \phi \rangle \dvm + \int_M \langle \delta^\eta \omega, \delta^\eta \phi \rangle \dvm = 0, \qquad \forall\, \phi\in H^1(\Omega^k_0(M, \mathbb{C})).
\]
\item For $\omega\in H^1(\Omega^k_{\rm abs}(M, \mathbb{C}))$ such that $\Delta^\eta \omega =0$, we say that $\nu\lrcorner d^\eta \omega \in L^2(\Omega^k(\pam,\mathbb{C}))$ if there exists $\beta\in L^2(\Omega^k(\pam,\mathbb{C}))$ such that
\[
\int_M \langle d^\eta \omega, d^\eta \phi \rangle \dvm + \int_M \langle \delta^\eta \omega, \delta^\eta \phi \rangle \dvm = \int_\pam \langle \beta, \iota^* \phi \rangle \dvpm, \qquad \forall \, \phi \in H^1(\Omega^k_{\rm abs}(M, \mathbb{C})).
\]
We may then set $-\nu\lrcorner d^\eta \omega=\beta$. 
\end{itemize}

\begin{lemma}[Ehrling's inequality; see Lemma 2.3 of \cite{AM:07}] \label{lem: Ehrling ineq}
Let $\mathcal{B}_1$, $\mathcal{B}_2$, $\mathcal{B}_3$ be Banach spaces, with $\mathcal{B}_1$ reflexive. Let $T:\mathcal{B}_1\to \mathcal{B}_3$ be a compact map, and $S:\mathcal{B}_1\to \mathcal{B}_2$ a continuous injection. Then, for any $\epsilon>0$, there exists $C_\epsilon>0$ such that
\end{lemma}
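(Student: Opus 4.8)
The plan is to argue by contradiction, using the reflexivity of $\mathcal{B}_1$ to extract a weakly convergent subsequence and then converting that weak convergence into strong convergence via the compactness of $T$. I take the intended conclusion to be the interpolation-type bound: for every $\epsilon>0$ there exists $C_\epsilon>0$ such that
$$\|Tx\|_{\mathcal{B}_3}\leq \epsilon\,\|x\|_{\mathcal{B}_1}+C_\epsilon\,\|Sx\|_{\mathcal{B}_2}\qquad\text{for all }x\in\mathcal{B}_1.$$
Here $\|\cdot\|_{\mathcal{B}_1}$ is the strongest norm, and the point is that the compact (hence ``smoothing'') image $Tx$ is controlled by an arbitrarily small multiple of the strong norm together with a (large) multiple of the weaker $S$-norm.

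First I would negate the statement. If the bound fails for some fixed $\epsilon_0>0$, then for each $n\in\NN$ there is $x_n\in\mathcal{B}_1$ with $\|Tx_n\|_{\mathcal{B}_3}>\epsilon_0\|x_n\|_{\mathcal{B}_1}+n\|Sx_n\|_{\mathcal{B}_2}$. By homogeneity I normalise $\|x_n\|_{\mathcal{B}_1}=1$. Since $T$ is bounded, the left-hand side is at most $\|T\|$, so the normalised inequality forces $\|Sx_n\|_{\mathcal{B}_2}\to 0$, while at the same time $\|Tx_n\|_{\mathcal{B}_3}>\epsilon_0$ for every $n$.

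Next I would invoke reflexivity: the bounded sequence $(x_n)$ has a subsequence with $x_{n_k}\rightharpoonup x$ weakly in $\mathcal{B}_1$. Compactness of $T$ upgrades this to strong convergence $Tx_{n_k}\to Tx$ in $\mathcal{B}_3$. Continuity of $S$ makes it weak-to-weak continuous, so $Sx_{n_k}\rightharpoonup Sx$; but we already have $Sx_{n_k}\to 0$ strongly, so by uniqueness of limits $Sx=0$, and injectivity of $S$ gives $x=0$. Consequently $\|Tx_{n_k}\|_{\mathcal{B}_3}\to\|Tx\|_{\mathcal{B}_3}=0$, which contradicts $\|Tx_{n_k}\|_{\mathcal{B}_3}>\epsilon_0$.

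The only delicate point — the main obstacle, such as it is — is the passage from the weak limit to the vanishing of $x$. One must use that a continuous linear operator is automatically weak-to-weak continuous, that a strong limit $Sx_{n_k}\to 0$ pins down the weak limit $Sx$ to be $0$, and that injectivity of $S$ is precisely what converts $Sx=0$ into $x=0$. Everything else is homogeneity bookkeeping together with the standard selection principle in reflexive spaces.
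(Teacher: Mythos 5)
Your proof is correct, and you also correctly reconstructed the intended conclusion $\|Tx\|_{\mathcal{B}_3}\leq \epsilon\|x\|_{\mathcal{B}_1}+C_\epsilon\|Sx\|_{\mathcal{B}_2}$, which in the paper's source was displaced outside the lemma environment. Note that the paper itself gives no proof of this lemma --- it simply cites Lemma 2.3 of Arendt--Mazzeo --- and your argument (normalise, extract a weak limit by reflexivity, use complete continuity of the compact operator $T$, and pin down $x=0$ via weak-to-weak continuity and injectivity of $S$) is precisely the standard contradiction proof of Ehrling's lemma found in that reference, with all the delicate points (boundedness of compact operators, uniqueness of weak limits) handled correctly.
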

\[
\norm{Tx}{\mathcal{B}_3}\leq \epsilon\norm{x}{\mathcal{B}_1}+C_\epsilon\norm{Sx}{\mathcal{B}_2},\qquad \forall x\in \mathcal{B}_1.
\]
Define,
\[
\mathrm{Dom}(T^{[k],\eta}):= \{\omega \in L^2(\Omega^k(\pam,\mathbb{C})): \text{$\exists$ $\tilde\omega \in H^1(\Omega^k_{\rm abs}(M, \mathbb{C}))$ such that  $\iota^*\tilde \omega=\omega$, $\Delta^\eta \tilde\omega=0$ and $\nu\lrcorner d^\eta \tilde\omega \in L^2(\Omega^k(\pam,\mathbb{C}))$}\},
\]
and set
\[
T^{[k],\eta}\omega=-\nu\lrcorner d^\eta \tilde\omega.
\]

\begin{lemma}
We have
\[
H^1(\Omega^k_{\rm abs}(M, \mathbb{C}))=H^1(\Omega^k_0(M,\mathbb{C}))\oplus H^1_{\eta,\rm har}(\Omega^k_{\rm abs}(M, \mathbb{C})),
\]
where $H^1_{\eta,\rm har}(\Omega^k_{\rm abs}(M, \mathbb{C})):= \{\tilde\omega \in H^1(\Omega^k_{\rm abs}(M, \mathbb{C})): \Delta^\eta \tilde \omega=0 \}$. 
\end{lemma}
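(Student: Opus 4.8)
The plan is to realise the decomposition as a variational splitting governed by the magnetic Dirichlet integral $\mathcal{D}^\eta$, exploiting that this form is $H^1$-elliptic on $H^1(\Omega^k_0(M,\mathbb{C}))$, as established in \eqref{eq:ellipticsteklov} during the proof of Theorem \ref{thm:magneticboundprob}. First I would observe that both candidate summands genuinely lie inside $H^1(\Omega^k_{\rm abs}(M,\mathbb{C}))$: any form vanishing on $\partial M$ has in particular $\nu\lrcorner\omega=0$ there, so $H^1(\Omega^k_0(M,\mathbb{C}))\subseteq H^1(\Omega^k_{\rm abs}(M,\mathbb{C}))$, while $H^1_{\eta,\rm har}(\Omega^k_{\rm abs}(M,\mathbb{C}))$ is a subspace by definition. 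Thus the asserted identity makes sense as a decomposition inside $H^1(\Omega^k_{\rm abs}(M,\mathbb{C}))$.

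For the existence of the decomposition, fix $\omega\in H^1(\Omega^k_{\rm abs}(M,\mathbb{C}))$ and consider $\mathcal{D}^\eta$ restricted to $H^1(\Omega^k_0(M,\mathbb{C}))\times H^1(\Omega^k_0(M,\mathbb{C}))$; by \eqref{eq:ellipticsteklov} it is Hermitian, continuous and coercive, hence an equivalent inner product on $H^1(\Omega^k_0(M,\mathbb{C}))$. The functional $\hat\xi\mapsto \mathcal{D}^\eta(\omega,\hat\xi)$ is continuous on $H^1(\Omega^k_0(M,\mathbb{C}))$ since $|\mathcal{D}^\eta(\omega,\hat\xi)|\leq C\|\omega\|_{H^1}\|\hat\xi\|_{H^1}$, which in turn follows from the continuity of $d^\eta,\delta^\eta$ as maps $H^1\to L^2$. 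By the Lax--Milgram theorem (\cite[Cor. 1.5.10]{Sch:95}) there is a unique $\omega_0\in H^1(\Omega^k_0(M,\mathbb{C}))$ with
\[
\mathcal{D}^\eta(\omega_0,\hat\xi)=\mathcal{D}^\eta(\omega,\hat\xi)\qquad\text{for all }\hat\xi\in H^1(\Omega^k_0(M,\mathbb{C})).
\]
Setting $\omega_h:=\omega-\omega_0$, we have $\omega_h\in H^1(\Omega^k_{\rm abs}(M,\mathbb{C}))$ as a difference of elements of that space, and $\mathcal{D}^\eta(\omega_h,\phi)=0$ for every $\phi\in H^1(\Omega^k_0(M,\mathbb{C}))$, which is precisely the weak statement $\Delta^\eta\omega_h=0$ in the sense fixed above. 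Hence $\omega_h\in H^1_{\eta,\rm har}(\Omega^k_{\rm abs}(M,\mathbb{C}))$ and $\omega=\omega_0+\omega_h$ is the required splitting.

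Finally, to see that the sum is direct, suppose $\omega\in H^1(\Omega^k_0(M,\mathbb{C}))\cap H^1_{\eta,\rm har}(\Omega^k_{\rm abs}(M,\mathbb{C}))$. Then $\omega$ is itself an admissible test form, so weak harmonicity applied with $\phi=\omega$ gives $\mathcal{D}^\eta(\omega,\omega)=0$; the ellipticity \eqref{eq:ellipticsteklov} then forces $\|\omega\|_{H^1}=0$, i.e. $\omega=0$. The same computation with $\phi=\omega_0$ shows $\mathcal{D}^\eta(\omega_h,\omega_0)=0$, so the splitting is in fact $\mathcal{D}^\eta$-orthogonal. The main obstacle is not any single estimate but keeping the functional-analytic bookkeeping consistent: one must check that the test space in the weak definition of $\Delta^\eta\omega_h=0$ is exactly $H^1(\Omega^k_0(M,\mathbb{C}))$ — which is what the variational construction delivers — and that Lax--Milgram is applied to the complex Hermitian form with the correct conventions on (anti)linearity. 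The genuinely substantive input, namely the coercivity of $\mathcal{D}^\eta$ on $H^1(\Omega^k_0(M,\mathbb{C}))$, has already been secured in \eqref{eq:ellipticsteklov}, so the remaining work is routine.
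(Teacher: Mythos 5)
Your proposal is correct and is essentially the intended argument: the paper states this lemma without proof (it adapts the corresponding decomposition of Arendt--Mazzeo \cite{AM:07}), and your variational splitting via Lax--Milgram/Riesz applied to $\mathcal{D}^\eta$ on $H^1(\Omega^k_0(M,\mathbb{C}))$, with coercivity supplied by \eqref{eq:ellipticsteklov} from the proof of Theorem \ref{thm:magneticboundprob}, is precisely the machinery the paper itself deploys for well-posedness. Your complement term satisfies, by construction, $\mathcal{D}^\eta(\omega_h,\phi)=0$ for all $\phi\in H^1(\Omega^k_0(M,\mathbb{C}))$, which matches the paper's weak definition of $\Delta^\eta\omega_h=0$ for elements of $H^1(\Omega^k_{\rm abs}(M,\mathbb{C}))$ exactly, and the trivial-intersection step (testing weak harmonicity against the form itself and invoking \eqref{eq:ellipticsteklov}) is also sound.
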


The above lemma implies that the trace operator restricted to $H^1_{\eta,\rm har}(\Omega^k_{\rm abs}(M, \mathbb{C}))$ is bijective. Similar to the discussion in \cite[Section 3]{AM:07}, using the closed graph theorem and the Stone--Weierstrass theorem, we would have that the space $$V:=\iota^*H^1(\Omega^k_{\rm abs}(M, \mathbb{C}))$$ with the well-defined norm $\norm{\iota^*\omega}{V}:=\norm{\omega}{H^1(\Omega^k_{\rm abs}(M, \mathbb{C}))}$ becomes a Hilbert space, and is dense in $L^2(\Omega^k(\pam,\mathbb{C}))$.

\begin{proposition}\label{prop:assocform}
The operator $T^{[k],\eta}$ is associated with the symmetric, continuous and $(V, L^2(\Omega^k(\pam,\mathbb{C})))$-elliptic bilinear form $\mathcal{A}^\eta$ on $V$ given by
\[
\mathcal{A}^\eta(\alpha, \beta):=\int_M \langle d^\eta\tilde\alpha,d^\eta\tilde\beta \rangle\dvm+\int_M \langle \delta^\eta\tilde\alpha,\delta^\eta\tilde\beta \rangle\dvm,
\]
where $\tilde \alpha,\tilde \beta \in H^1_{\eta,\rm har}(\Omega^k_{\rm abs}(M, \mathbb{C}))$ such that $\iota^*\tilde \alpha=\alpha$ and $\iota^*\tilde\beta=\beta$.
\end{proposition}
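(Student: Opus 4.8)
The plan is to verify the four asserted properties of $\mathcal{A}^\eta$ in turn — well-definedness, Hermitian symmetry, continuity, and $(V,L^2(\Omega^k(\pam,\mathbb{C})))$-ellipticity — and then to read off the identification of $T^{[k],\eta}$ as the associated operator directly from the weak definition of $\nu\lrcorner d^\eta$. The first three are routine. For well-definedness I would use the preceding decomposition lemma: the trace $\iota^*$ restricts to a bijection of $H^1_{\eta,\rm har}(\Omega^k_{\rm abs}(M,\mathbb{C}))$ onto $V$, so each $\alpha\in V$ has a unique magnetic-harmonic representative $\tilde\alpha$, and one checks that replacing either extension by one differing by an element of $H^1(\Omega^k_0(M,\mathbb{C}))$ leaves $\mathcal{A}^\eta$ unchanged (test the weak equation $\Delta^\eta\tilde\beta=0$ against that element). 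Hermitian symmetry is immediate from that of the pointwise products $\langle d^\eta\tilde\alpha,d^\eta\tilde\beta\rangle$ and $\langle\delta^\eta\tilde\alpha,\delta^\eta\tilde\beta\rangle$, and continuity follows from Cauchy--Schwarz together with the boundedness of $d^\eta,\delta^\eta\colon H^1\to L^2$ and the definition $\norm{\alpha}{V}=\norm{\tilde\alpha}{H^1}$, giving $|\mathcal{A}^\eta(\alpha,\beta)|\le C\norm{\alpha}{V}\norm{\beta}{V}$.

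The hard part is ellipticity, and the obstacle is a mismatch of norms: the natural lower bound coming from a Gaffney-type estimate involves the interior norm $\norm{\tilde\alpha}{L^2(M)}$, whereas the definition of ellipticity requires the boundary norm $\norm{\alpha}{L^2(\pam)}$. I would first record the magnetic Gaffney inequality in its absolute form: the proof of Lemma \ref{lem:gaffny} applies verbatim to $\tilde\alpha\in H^1(\Omega^k_{\rm abs}(M,\mathbb{C}))$, since the underlying non-magnetic Gaffney inequality \cite[Cor.~2.1.6]{Sch:95} also covers the absolute boundary condition, yielding
\[
\norm{\alpha}{V}^2=\norm{\tilde\alpha}{H^1}^2\le C\bigl(\mathcal{A}^\eta(\alpha,\alpha)+\norm{\tilde\alpha}{L^2(M)}^2\bigr).
\]
To convert the interior norm into a boundary norm I would then invoke Ehrling's inequality (Lemma \ref{lem: Ehrling ineq}) with $\mathcal{B}_1=H^1_{\eta,\rm har}(\Omega^k_{\rm abs}(M,\mathbb{C}))$ (reflexive, being a closed subspace of a Hilbert space), $T$ the inclusion $\mathcal{B}_1\hookrightarrow L^2(\Omega^k(M,\mathbb{C}))$ (compact by Rellich), and $S=\iota^*\colon\mathcal{B}_1\to L^2(\Omega^k(\pam,\mathbb{C}))$ the trace, a continuous injection — injectivity holding because an element of $H^1_{\eta,\rm har}(\Omega^k_{\rm abs})$ with vanishing tangential trace also has vanishing normal part, hence lies in $H^1(\Omega^k_0)$ and must vanish by the directness of the decomposition (equivalently, by the uniqueness Theorem \ref{thm:dirichletmagnetic}). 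For any $\epsilon>0$ this gives $\norm{\tilde\alpha}{L^2(M)}\le \epsilon\norm{\alpha}{V}+C_\epsilon\norm{\alpha}{L^2(\pam)}$; squaring, feeding this into the Gaffney bound, and choosing $\epsilon$ small enough to absorb the resulting $\norm{\alpha}{V}^2$ term into the left-hand side produces constants $\alpha_0>0$ and $\beta\in\RR$ with $\mathcal{A}^\eta(\alpha,\alpha)+\beta\norm{\alpha}{L^2(\pam)}^2\ge\alpha_0\norm{\alpha}{V}^2$, which is the required ellipticity.

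Finally, to identify $T^{[k],\eta}$ with the operator associated with $\mathcal{A}^\eta$, I would unwind the weak definition of $\nu\lrcorner d^\eta$: by definition $\alpha\in\mathrm{Dom}(T^{[k],\eta})$ with $T^{[k],\eta}\alpha=\gamma$ means that the harmonic extension $\tilde\alpha$ satisfies $\int_M\langle d^\eta\tilde\alpha,d^\eta\phi\rangle\dvm+\int_M\langle\delta^\eta\tilde\alpha,\delta^\eta\phi\rangle\dvm=\int_{\pam}\langle\gamma,\iota^*\phi\rangle\dvpm$ for all $\phi\in H^1(\Omega^k_{\rm abs}(M,\mathbb{C}))$. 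Decomposing $\phi=\phi_0+\tilde\phi$ through the decomposition lemma, the part $\phi_0\in H^1(\Omega^k_0)$ contributes nothing to either side — the left by $\Delta^\eta\tilde\alpha=0$, the right since $\iota^*\phi_0=0$ — so the identity is equivalent to $\mathcal{A}^\eta(\alpha,\iota^*\tilde\phi)=(\gamma,\iota^*\tilde\phi)_{L^2(\pam)}$ for all $\tilde\phi$, that is, to $\mathcal{A}^\eta(\alpha,\cdot)=(\gamma,\cdot)_{L^2(\pam)}$ on $V$. This is precisely the defining relation of the self-adjoint operator attached to $\mathcal{A}^\eta$, so $T^{[k],\eta}$ is that operator. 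Combined with the compactness of $V\hookrightarrow L^2(\Omega^k(\pam,\mathbb{C}))$ (the trace theorem into $H^{1/2}(\pam)$ followed by the compact embedding $H^{1/2}(\pam)\hookrightarrow L^2(\pam)$) and the Note following the ellipticity definition, this also furnishes the compact resolvent underlying the discreteness in Theorem \ref{thm:discretespectrum}.
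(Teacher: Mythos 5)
Your proposal is correct and follows essentially the same route as the paper: ellipticity is obtained from the magnetic Gaffney inequality with absolute boundary conditions combined with Ehrling's inequality applied to exactly the same triple of spaces (compact inclusion $H^1_{\eta,\rm har}(\Omega^k_{\rm abs}(M,\mathbb{C}))\hookrightarrow L^2(\Omega^k(M,\mathbb{C}))$, trace into $L^2(\Omega^k(\pam,\mathbb{C}))$ as the continuous injection), and the identification of $T^{[k],\eta}$ with the operator associated to $\mathcal{A}^\eta$ uses the same decomposition $H^1(\Omega^k_{\rm abs}(M,\mathbb{C}))=H^1(\Omega^k_0(M,\mathbb{C}))\oplus H^1_{\eta,\rm har}(\Omega^k_{\rm abs}(M,\mathbb{C}))$ to pass between test forms in $H^1_{\eta,\rm har}$ and all of $H^1(\Omega^k_{\rm abs})$. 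The differences are cosmetic: you absorb the Ehrling term with a generic small $\epsilon$ where the paper chooses explicit constants, and you spell out the routine well-definedness, symmetry and continuity checks that the paper leaves implicit.
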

Due to the compactness of the trace operator $H^1_{\eta,\rm har}(\Omega^k_{\rm abs}(M, \mathbb{C}))\to L^2(\Omega^k(\pam,\mathbb{C}))$, the above proposition implies that $T^{[k],\eta}$ has a compact resolvent, whence it follows that it has a discrete spectrum diverging to infinity, with finite multiplicities, and the eigen $k$-forms restricted to the boundary form an orthonormal basis of $L^2(\Omega^k(\pam, \CC))$. We now prove the proposition.
\begin{proof}[Proof of Proposition \ref{prop:assocform}]
We use Lemma \ref{lem: Ehrling ineq} with the compact embedding $H^1_{\eta,\rm har}(\Omega^k_{\rm abs}(M, \mathbb{C}))\hookrightarrow L^2(\Omega^k(M, \mathbb{C}))$ and the continuous injection given by the trace operator $H^1_{\eta,\rm har}(\Omega^k_{\rm abs}(M, \mathbb{C}))\to L^2(\Omega^k(\pam,\mathbb{C}))$. Let $C_G^\eta>0$ be the constant from the magnetic Gaffney inequality that depends on the manifold $M$ and the magnetic potential $\eta$. Note that we can use the absolute conditions in place of the relative conditions, although we derived the Magnetic Gaffney inequality for the latter. Given $0<a<(C_G^\eta)^{-1}$, there exists $c>0$ such that for all $\omega \in H^1_{\eta,\rm har}(\Omega^k_{\rm abs}(M, \mathbb{C}))$,
\begin{align*}
\norm{\omega}{L^2(\Omega^k(M, \mathbb{C}))}^2&\leq a \,\norm{\omega}{H^1(\Omega^k(M, \mathbb{C}))}+c\,\norm{\iota^*\omega}{L^2(\Omega^k(\pam,\mathbb{C}))}\\
&\leq a \, C_G^\eta \left(\norm{d^\eta \omega}{L^2(\Omega^k(M,\mathbb{C}))}^2+\norm{\delta^\eta\omega}{L^2(\Omega^k(M,\mathbb{C}))}^2+\norm{\omega}{L^2(\Omega^k(M,\mathbb{C}))}^2\right)+c\,\norm{\iota^*\omega}{L^2(\Omega^k(\pam,\mathbb{C}))}^2,
\end{align*}
which gives
\[
\norm{\omega}{L^2(\Omega^k(M,\mathbb{C}))}^2\leq \frac{a\,C_G^\eta}{1-a\,C_G^\eta}\left(\norm{d^\eta \omega}{L^2(\Omega^k(M,\mathbb{C}))}^2+\norm{\delta^\eta\omega}{L^2(\Omega^k(M,\mathbb{C}))}^2\right)+ \frac{c}{1-a\,C_G^\eta}\norm{\iota^*\omega}{L^2(\Omega^k(\pam,\mathbb{C}))}^2.
\]
Without loss of generality, we may assume that $C_G^\eta>1$. Choosing $a=\frac{C_G^\eta - 1}{(C_G^\eta)^2}<(C_G^\eta)^{-1}$, we have that
\[
\norm{\omega}{L^2(\Omega^k(M,\mathbb{C}))}^2\leq (C_G^\eta-1)\left(\norm{d^\eta \omega}{L^2(\Omega^k(M,\mathbb{C}))}^2+\norm{\delta^\eta\omega}{L^2(\Omega^k(M,\mathbb{C}))}^2\right)+ c\, C_G^\eta\norm{\iota^*\omega}{L^2(\Omega^k(\pam,\mathbb{C}))}^2, \quad \forall \, \omega \in H^1_{\eta,\rm har}(\Omega^k_{\rm abs}(M, \mathbb{C})).
\]
We observe that
\begin{align*}
\mathcal{A}^\eta(\iota^*\omega, \iota^*\omega)&=\norm{d^\eta \omega}{L^2(\Omega^k(M,\mathbb{C}))}^2+\norm{\delta^\eta\omega}{L^2(\Omega^k(M,\mathbb{C}))}^2+(C_G^\eta)^{-1} \norm{\omega}{L^2(\Omega^k(M,\mathbb{C}))}^2-(C_G^\eta)^{-1} \norm{\omega}{L^2(\Omega^k(M,\mathbb{C}))}^2\\
&\begin{multlined}
\geq \norm{d^\eta \omega}{L^2(\Omega^k(M,\mathbb{C}))}^2+\norm{\delta^\eta\omega}{L^2(\Omega^k(M,\mathbb{C}))}^2+(C_G^\eta)^{-1} \norm{\omega}{L^2(\Omega^k(M,\mathbb{C}))}^2\\-(1-(C_G^\eta)^{-1})\left(\norm{d^\eta \omega}{L^2(\Omega^k(M,\mathbb{C}))}^2+\norm{\delta^\eta\omega}{L^2(\Omega^k(M,\mathbb{C}))}^2\right)-c\norm{\iota^*\omega}{L^2(\Omega^k(\pam,\mathbb{C}))}^2,
\end{multlined}
\end{align*}
implying
\begin{align*}
\mathcal{A}^\eta(\iota^*\omega, \iota^*\omega)+c\norm{\iota^*\omega}{L^2(\Omega^k(\pam,\mathbb{C}))}^2&\geq (C_G^\eta)^{-1}\left(\norm{d^\eta \omega}{L^2(\Omega^k(M,\mathbb{C}))}^2+\norm{\delta^\eta\omega}{L^2(\Omega^k(M,\mathbb{C}))}^2+\norm{\omega}{L^2(\Omega^k(M,\mathbb{C}))}^2\right)\\
&\geq (C_G^\eta)^{-2}\norm{\omega}{H^1(\Omega^k(M,\mathbb{C}))}^2\\
&= (C_G^\eta)^{-2}\norm{\iota^*\omega}{V}^2.
\end{align*}
Thus, $\mathcal{A}^\eta$ is $(V, L^2(\Omega^k(\pam,\mathbb{C})))$-elliptic, and is associated with a self-adjoint operator, say $\mathcal{O}$. So, it remains to prove that $\mathcal{O}=T^{[k], \eta}$.

Suppose $\iota^*\omega \in {\rm Dom}(\mathcal{O})$ for $\omega \in H^1_{\eta,\rm har}(\Omega^k_{\rm abs}(M, \mathbb{C}))$,  which is equivalent to 
\[
\int_M \langle d^\eta \omega, d^\eta \phi \rangle \dvm + \int_M \langle \delta^\eta \omega, \delta^\eta \phi \rangle \dvm =\int_\pam \langle \mathcal{O}(\iota^*\omega), \iota^*\phi \rangle \dvpm, \qquad \forall \, \phi \in H^1_{\eta,\rm har}(\Omega^k_{\rm abs}(M, \mathbb{C})).
\]
Observe that the above identity holds  also for all $\phi \in H^1(\Omega^k_0(M))$, as both the left-hand side and the right-hand side would be equal to zero. So, the identity holds for all $\phi \in H^1(\Omega^k_{\rm abs}(M, \mathbb{C}))=H^1(\Omega^k_0(M, \mathbb{C}))\oplus H^1_{\eta,\rm har}(\Omega^k_{\rm abs}(M, \mathbb{C}))$, which implies that $\nu\lrcorner d^\eta \omega \in L^2(\Omega^k(\pam, \mathbb{C}))$ in the weak sense. Hence, $\iota^*\omega \in {\rm Dom}(T^{[k],\eta})$ and $T^{[k],\eta} (\iota^*\omega)=\mathcal{O}(\iota^*\omega)$.

Conversely, suppose $\iota^*\omega \in {\rm Dom}(T^{[k],\eta})$ for $\omega \in H^1_{\eta,\rm har}(\Omega^k_{\rm abs}(M, \mathbb{C}))$. Then
\[
\int_M \langle d^\eta \omega, d^\eta \phi \rangle \dvm + \int_M \langle \delta^\eta \omega, \delta^\eta \phi \rangle \dvm= \int_\pam \langle T^{[k],\eta} \omega, \iota^*\phi \rangle \dvpm,\qquad \forall\, \phi \in H^1(\Omega^k_{\rm abs}(M, \mathbb{C})),
\]
In particular, the identity holds for all $\phi\in H^1_{\eta,\rm har}(\Omega^k_{\rm abs}(M,\mathbb{C}))$, which implies that $\iota^*\omega \in {\rm Dom}(\mathcal{O})$ and $\mathcal{O}(\iota^*\omega)=T^{[k],\eta} (\iota^*\omega)$. Thus, $T^{[k],\eta}$ is the self-adjoint operator associated with $\mathcal{A}^\eta$.
\end{proof}

Finally, we state the min-max principle for the first eigenvalue of the magnetic Steklov operator on forms.
\begin{proposition}\label{prop:minmax}
Let $(M^m,g)$ be a compact Riemannian manifold of dimension $m$ with smooth boundary and let $\eta$ be a magnetic potential. Then the first eigenvalue satisfies the min-max principle:
$$\sigma_{1,k}^\eta(M)=\mathop{\rm inf}\limits_{\{\hat\omega \in \Omega^k(M, \mathbb{C})|\,\nu\lrcorner\hat\omega=0\}}\left\{\frac{\displaystyle\int_M(|d^\eta\hat\omega|^2+|\delta^\eta\hat\omega|^2){\rm dvol}_M}{\displaystyle\int_{\partial M}|\hat\omega|^2{\rm dvol}_{\partial M}}\right\}.$$
\end{proposition}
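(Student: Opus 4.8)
The plan is to read the min-max characterization off the abstract spectral theory already established for the bilinear form $\mathcal{A}^\eta$, and then to invoke a Dirichlet principle in order to replace $\eta$-harmonic extensions by arbitrary forms subject only to $\nu\lrcorner\hat\omega=0$. Since $T^{[k],\eta}$ is the self-adjoint operator associated with the $(V,L^2(\Omega^k(\pam,\CC)))$-elliptic form $\mathcal{A}^\eta$ (Proposition \ref{prop:assocform}) and has discrete spectrum by Theorem \ref{thm:discretespectrum}, the standard variational principle for the smallest eigenvalue of such an operator gives
$$\sigma_{1,k}^\eta(M)=\mathop{\rm inf}\limits_{\alpha\in V\setminus\{0\}}\frac{\mathcal{A}^\eta(\alpha,\alpha)}{\|\alpha\|^2_{L^2(\pam)}}.$$
By the definition of $\mathcal{A}^\eta$, for each $\alpha=\iota^*\tilde\alpha$ the numerator equals $\int_M(|d^\eta\tilde\alpha|^2+|\delta^\eta\tilde\alpha|^2){\rm dvol}_M$, where $\tilde\alpha\in H^1_{\eta,\rm har}(\Omega^k_{\rm abs}(M,\CC))$ is the $\eta$-harmonic extension of $\alpha$. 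Thus the infimum on the right-hand side of the asserted identity, restricted to $\eta$-harmonic extensions, already equals $\sigma_{1,k}^\eta(M)$.

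Next I would show that enlarging the competing class to all $\hat\omega$ with $\nu\lrcorner\hat\omega=0$ cannot decrease the Rayleigh quotient; this is precisely a Dirichlet principle. Writing $\mathcal{E}(\omega):=\int_M(|d^\eta\omega|^2+|\delta^\eta\omega|^2){\rm dvol}_M$, fix such an $\hat\omega$ with $\iota^*\hat\omega=\alpha$, let $\tilde\alpha$ be the $\eta$-harmonic extension of $\alpha$, and set $\phi:=\hat\omega-\tilde\alpha$. Then $\iota^*\phi=0$ and $\nu\lrcorner\phi=0$, so $\phi$ vanishes on $\pam$ and hence $\phi\in H^1(\Omega^k_0(M,\CC))$. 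Expanding the energy,
$$\mathcal{E}(\hat\omega)=\mathcal{E}(\tilde\alpha)+2\,{\rm Re}\int_M\left(\langle d^\eta\tilde\alpha,d^\eta\phi\rangle+\langle\delta^\eta\tilde\alpha,\delta^\eta\phi\rangle\right){\rm dvol}_M+\mathcal{E}(\phi).$$
The cross term vanishes because $\tilde\alpha$ is $\eta$-harmonic in the weak sense, namely $\int_M(\langle d^\eta\tilde\alpha,d^\eta\phi\rangle+\langle\delta^\eta\tilde\alpha,\delta^\eta\phi\rangle){\rm dvol}_M=0$ for all $\phi\in H^1(\Omega^k_0(M,\CC))$. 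Since $\mathcal{E}(\tilde\alpha)=\mathcal{A}^\eta(\alpha,\alpha)$, it follows that $\mathcal{E}(\hat\omega)\geq \mathcal{A}^\eta(\alpha,\alpha)$, with equality if and only if $\phi=0$. Dividing by $\|\alpha\|^2_{L^2(\pam)}$ shows that every admissible Rayleigh quotient is bounded below by $\sigma_{1,k}^\eta(M)$.

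Finally I would combine the two bounds. The $\eta$-harmonic extensions are themselves admissible competitors, since they satisfy $\nu\lrcorner\tilde\alpha=0$, so the infimum over all $\hat\omega$ with $\nu\lrcorner\hat\omega=0$ is at most $\sigma_{1,k}^\eta(M)$, while the previous paragraph shows it is at least $\sigma_{1,k}^\eta(M)$; hence equality holds. To match the statement, whose infimum runs over smooth forms, I would note that $\Omega^k(M,\CC)$ is dense in $H^1$ in the relevant norm and that the minimizing $\eta$-harmonic extension is smooth by the regularity established via Theorem \ref{thm:smoothcohomology} and Proposition \ref{prop:strongsolution}, so restricting the infimum to smooth forms does not change its value. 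I expect the main obstacle to be this Dirichlet-principle step: one must check carefully that $\phi$ lies in $H^1(\Omega^k_0(M,\CC))$ (using \emph{both} boundary conditions $\iota^*\phi=0$ and $\nu\lrcorner\phi=0$) so that it is an admissible test form, and that the weak $\eta$-harmonicity of $\tilde\alpha$ then annihilates the cross term; by comparison, the passage from the abstract infimum over $V$ to the concrete infimum over smooth forms via density and elliptic regularity is routine.
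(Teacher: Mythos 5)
Your proof is correct and is essentially the argument the paper intends: the paper's own proof is a one-line reference to the non-magnetic case \cite[Thm. 11]{RS:12}, whose core is exactly your Dirichlet-principle step (decompose an admissible $\hat\omega$ as the $\eta$-harmonic extension of its trace plus a form vanishing on $\partial M$, and kill the cross term using weak $\eta$-harmonicity), combined with the standard variational principle for the form $\mathcal{A}^\eta$ of Proposition \ref{prop:assocform}. The only imprecision is your citation of Theorem \ref{thm:smoothcohomology} and Proposition \ref{prop:strongsolution} for smoothness of the minimizer, since those results concern \emph{relative} boundary conditions; the density-of-smooth-forms argument you also sketch (preserving the condition $\nu\lrcorner\hat\omega=0$, with the Rayleigh quotient continuous in $H^1$ by the trace theorem) is the cleaner way to restrict the infimum to smooth competitors.
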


The proof of Proposition \ref{prop:minmax} can be done in exactly the same way as for the case when $\eta=0$. See the proof of \cite[Thm. 11]{RS:12} for more details.
 
\subsection{Proofs of Theorems \ref{thm:eigtaylor} and \ref{thm:balldiamineqcounter}}

We first prove Theorem \ref{thm:eigtaylor} which shows that the analogue of the Diamagnetic Inequality for the Steklov problem does not necessarily hold.
\begin{proof}[Proof of Theorem \ref{thm:eigtaylor}]
    We follow the same steps as in \cite[Thm. 4.7]{EGHP:23}. We take $\hat\omega$ to be any complex differential $k$-form on $M$ such that $\nu\lrcorner\hat\omega=0$ on $\partial M$. We have $$\int_M|d^{t\eta}\hat\omega|^2 {\rm dvol}_M=||d^M\hat\omega||_{L^2}^2+2t\Re\left(\int_M \langle d^M\hat\omega, i\eta\wedge\hat\omega\rangle {\rm dvol}_M\right)+t^2||\eta\wedge\hat\omega||_{L^2}^2,$$ 
    and
    \begin{eqnarray*}
    \int_M|\delta^{t\eta}\hat\omega|^2 {\rm dvol}_M&=&||\delta^M\hat\omega||_{L^2}^2-2t\Re\left(\int_M \langle \delta^M\hat\omega, i\eta\lrcorner\hat\omega\rangle {\rm dvol}_M\right)+t^2||\eta\lrcorner\hat\omega||_{L^2}^2\\
    &=&||\delta^M\hat\omega||_{L^2}^2-2t\Re\left(\int_M \langle \hat\omega, id^M(\eta\lrcorner\hat\omega)\rangle {\rm dvol}_M\right)+t^2||\eta\lrcorner\hat\omega||_{L^2}^2,
    \end{eqnarray*}
where the last equality is obtained by the Stokes formula as $\nu\lrcorner\hat\omega=0$. 

Now we take $\omega$ to be an eigenform of the Steklov operator $T^{[k]}$ associated to $\sigma_{1,k}(M)$ and $\hat \omega$ to be its harmonic extension. We add the two equations from above, use the Cartan formula $\mathcal{L}_\eta \hat\omega = \eta \lrcorner d^M \hat\omega + d^M(\eta \lrcorner \hat\omega)$, and apply the min-max principle to obtain 
$$\sigma^{t\eta}_{1,k}(M)\leq \sigma_{1,k}(M)+\frac{2t}{||\hat\omega||^2_{L^2(\partial M)}}{\rm Im}\left(\int_M\langle\mathcal{L}_\eta\hat\omega,\hat\omega\rangle {\rm dvol}_M\right)+\frac{||\hat\omega||^2_{L^2}||\eta||_\infty^2}{||\hat\omega||^2_{L^2(\partial M)}}t^2,$$
which is the required inequality.
\end{proof}

Next, we prove Theorem \ref{thm:balldiamineqcounter} which shows that the conditions of Theorem \ref{thm:eigtaylor} are satisfied in a particular case and hence that the Diamagnetic Inequality doesn't always hold.

\begin{proof}[Proof of Theorem \ref{thm:balldiamineqcounter}] 
By Theorem \ref{thm:eigtaylor}, it is sufficient to find a complex eigenform $\omega$ of the Steklov operator on the ball such that ${\rm Im} \left( \int_M \langle \mathcal{L}_{\eta} \hat\omega, \hat\omega \rangle {\rm dvol}_M\right)$ is negative, where $\hat\omega$ is the harmonic extension of $\omega$.  It is shown in \cite[Cor. 4]{RS:14} that the lowest eigenvalue $\sigma_{1,1}(M)$ of the Steklov operator on $1$-forms on $\mathbb{B}^{2n}$ is equal to $\frac{n+1}{n}$. Indeed, they show in \cite[Prop. 7]{RS:14} that any exact $1$-form $\omega=d^{\mathbb{S}^{2n-1}}\phi$, which is an eigenform of the Hodge Laplacian on $\mathbb{S}^{2n-1}$ associated with the eigenvalue $2n-1$, is an eigenform of the Steklov operator associated with the eigenvalue $\frac{n+1}{n}$. Notice here that $\phi$ is an eigenfunction of the Laplace-Beltrami operator on $\mathbb{S}^{2n-1}$ associated with the lowest positive eigenvalue $2n-1$ which is of multiplicity $2n$ \cite[p.159]{BGM:71}. Moreover, they compute in \cite[Prop. 7]{RS:14} the corresponding harmonic extension $\hat\omega$ of $\omega$ and show that it is equal to 
$$\hat\omega=\frac{2n-1}{2n}\left(d\hat\phi+\frac{1}{2n-1}r^3d^{\mathbb{S}^{2n-1}}\phi-r^2\phi dr\right),$$
where $d$ is the exterior differential in $\mathbb{R}^{2n}$ and $\hat\phi$ is the harmonic extension of $\phi$  satisfying $\frac{\partial \hat\phi}{\partial r}=\phi$ on $\mathbb{S}^{2n-1}$. A straightforward computation shows that $\hat\phi=r\phi$ and thus, the harmonic extension of $\omega$ becomes equal to 
\begin{equation}\label{eq:harmonicextensionomega}
\hat\omega=\frac{2n-1}{2n}\left((1-r^2)\phi dr+(r+\frac{1}{2n-1}r^3)d^{\mathbb{S}^{2n-1}}\phi\right).
\end{equation}
Now, for all $j=1,\ldots,n$, we consider the complex functions $\phi_j:=x_j+iy_j$ and  $\overline\phi_j=x_j-iy_j$, which are homogeneous polynomials of degree $1$. These are harmonic functions on $M$ and their restrictions to $\mathbb{S}^{2n-1}$ are eigenfunctions of the Laplace-Beltrami operator associated with the eigenvalue $2n-1$. We denote by $\overline\omega_j=d^{\mathbb{S}^{2n-1}}\overline\phi_j$ and $\overline{\hat\omega_j}$ its harmonic extension given by \eqref{eq:harmonicextensionomega}. 
By using the fact that $\mathcal{L}_X(f(r)\beta)=f'(r)\beta+f(r)\mathcal{L}^{\mathbb{S}^{2n-1}}_X\beta$, for any differential form $\beta$ in $\mathbb{S}^{2n-1}$ and any vector field $X$ in $\mathbb{R}^{2n}$ tangent to the sphere, we compute 
\begin{eqnarray*} 
\mathcal{L}_\eta\overline{\hat\omega_j}&=&\frac{2n-1}{2n}\Big(\left(-2r\eta(r)\overline\phi_j+(1-r^2)\eta(\overline\phi_j)\right) dr+(1-r^2)\overline\phi_j\mathcal{L}_\eta dr\\&&+\left(\eta(r)+\frac{3r^2}{2n-1}\eta(r)\right)d^{\mathbb{S}^{2n-1}}\overline\phi_j+(r+\frac{1}{2n-1}r^3) d^{\mathbb{S}^{2n-1}}(\eta(\overline\phi_j))\Big)\\
&=&-\frac{(2n-1)i}{2n}\left((1-r^2)\overline\phi_j dr+(r+\frac{1}{2n-1}r^3)d^{\mathbb{S}^{2n-1}}\overline\phi_j)\right)\\
&=&-i\overline{\hat\omega_j},
\end{eqnarray*}
where, in the second equality, we use $\eta(\overline\phi_j)=-i\overline\phi_j$ and $\eta(r)=0$, which can be proven by a straightforward computation.  
Hence, we deduce that 
$\langle\mathcal{L}_\eta\overline{\hat\omega_j}, \overline{\hat\omega_j}\rangle=-i|\hat\omega_j|^2$
which clearly has a negative imaginary part. 
\end{proof}

\section{Computation of the spectrum} 
\label{sec:examples}

In this section we present some explicit computations for the spectrum of the magnetic Steklov operator on differential forms on spheres and balls for certain magnetic potentials.

\subsection{Case of the magnetic Hodge Laplacian on $1$- and $3$-spheres}
In this section, we compute the spectrum of the the magnetic Hodge Laplacian on $\mathbb{S}^1$ and on $\mathbb{S}^3$ associated to the magnetic potential given by a Killing vector field. We first begin with the circle $\mathbb{S}^1$.   

\begin{theorem} Let $M=\mathbb{S}^1$ be the unit circle equipped with the standard metric and consider the magnetic potential  $t\eta$, for $t>0$, where $\eta=d\theta$ is the unit parallel $1$-form on $\mathbb{S}^1$. The spectrum of the magnetic Hodge Laplacian $\Delta^{t\eta}$ on $1$-forms is equal to $(k\pm t)^2$, where $k\in \mathbb{Z}$. In particular, the first magnetic Betti number $b_1^{k\eta}(M)=1$. 
\end{theorem}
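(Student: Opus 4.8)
The plan is to turn the eigenvalue equation into a scalar constant-coefficient ODE on $\mathbb{S}^1$ and diagonalize it by Fourier series. First I would fix the angular coordinate $\theta$ so that the metric is $d\theta^2$ and $\eta = d\theta$, and write every complex $1$-form uniquely as $\omega = f(\theta)\,d\theta$ with $f \in C^\infty(\mathbb{S}^1,\CC)$, thereby identifying $\Omega^1(\mathbb{S}^1,\CC)$ with $C^\infty(\mathbb{S}^1,\CC)$. Since $\dim \mathbb{S}^1 = 1$, every $1$-form is a top-degree form, so $d\omega = 0$, and $\eta\wedge\omega = d\theta\wedge(f\,d\theta) = 0$; hence $d^{t\eta}\omega = d\omega + it\,\eta\wedge\omega = 0$ and the magnetic Hodge Laplacian reduces to $\Delta^{t\eta} = d^{t\eta}\delta^{t\eta}$ on $1$-forms.

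Next I would compute the codifferential. With $*(f\,d\theta) = f$ one has $\delta(f\,d\theta) = -f'$ (where $' = d/d\theta$), and since $\eta^\sharp = \partial_\theta$ gives $\eta\lrcorner(f\,d\theta) = f$, we obtain
\[
\delta^{t\eta}\omega = \delta\omega - it\,\eta\lrcorner\omega = -f' - itf,
\]
a function. Applying $d^{t\eta}g = (g'+itg)\,d\theta$ to $g = -f'-itf$ yields
\[
\Delta^{t\eta}\omega = \bigl(-f'' - 2it f' + t^2 f\bigr)\,d\theta,
\]
so under the identification $\Delta^{t\eta}$ is the scalar operator $L_t f = -f'' - 2itf' + t^2 f$.

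I would then diagonalize $L_t$ in the orthogonal basis $\{e^{ik\theta}\}_{k\in\ZZ}$ of $L^2(\mathbb{S}^1,\CC)$: a direct substitution gives $L_t e^{ik\theta} = (k^2 + 2tk + t^2)e^{ik\theta} = (k+t)^2 e^{ik\theta}$, so the spectrum on $1$-forms is exactly $\{(k+t)^2 : k\in\ZZ\}$. Replacing $k$ by $-k$ shows this set is symmetric, which is what the notation $(k\pm t)^2$ records. For the magnetic Betti number, note $b_1^{t\eta}(\mathbb{S}^1) = \dim\ker\Delta^{t\eta}|_{\Omega^1}$, and $(k+t)^2 = 0$ for some $k\in\ZZ$ precisely when $t\in\ZZ$, in which case the index $k=-t$ is unique; thus for an integer potential $k\eta$ the kernel is one-dimensional and $b_1^{k\eta}(\mathbb{S}^1) = 1$. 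Equivalently, when $t\in\ZZ$ the gauge transformation $\omega \mapsto e^{-it\theta}\omega$ conjugates $d^{t\eta}$ to $d$, so the magnetic cohomology coincides with the de Rham cohomology of $\mathbb{S}^1$.

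I expect no genuine obstacle: the whole argument is a direct computation. The only care needed is in the sign and convention bookkeeping for $\delta$ and the interior product $\lrcorner$, so that the cross term assembles into the perfect square $(k+t)^2$; and in recognizing that the ``$\pm$'' in the statement simply reflects the $k\leftrightarrow -k$ symmetry of the integer Fourier index.
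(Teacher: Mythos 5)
Your proof is correct, and it takes a genuinely different route from the paper. The paper does not compute anything directly on $\mathbb{S}^1$: it quotes the known spectrum $(k\pm t)^2$ of the magnetic Laplacian on \emph{functions} from \cite[Ex. 2.3]{ELMP:16}, then invokes the fact that $\Delta^{t\eta}$ commutes with the Hodge star operator \cite[Cor. 3.2]{EGHP:23} to transfer that spectrum to $1$-forms, and finally treats the volume form $d\theta$ separately via the magnetic Bochner formula $\Delta^{t\eta}d\theta = \Delta^{\mathbb{S}^1}d\theta - 2it\mathcal{L}_{d\theta}d\theta + t^2 d\theta = t^2\,d\theta$. Your identification $\omega = f\,d\theta$ is exactly the Hodge duality made explicit: your scalar operator $L_t f = -f'' - 2itf' + t^2 f$ coincides with $\delta^{t\eta}d^{t\eta}$ acting on functions, so your computation amounts to a direct, self-contained verification of both ingredients the paper cites, followed by Fourier diagonalization (complete since $\{e^{ik\theta}\}_{k\in\ZZ}$ is an orthogonal basis of $L^2$ and the operator is elliptic and self-adjoint, so these eigenvalues exhaust the spectrum). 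What your approach buys is independence from the external references and transparency of where the cross term $2tk$ and hence the perfect square $(k+t)^2$ come from; what the paper's approach buys is brevity and a template (spectrum transfer via $*$, Bochner formula for parallel forms) that it reuses for the $\mathbb{S}^3$ computation in the same section. Two trivial remarks: your closing gauge-transformation aside has the sign of the exponent backwards for the stated conjugation (it should be $\omega \mapsto e^{it\theta}\omega$ intertwining $d^{t\eta}$ with $d$, or equivalently your map intertwines $d^{-t\eta}$ with $d$), but this is an inessential side comment; and your kernel argument for $b_1^{k\eta}(M)=1$ matches the paper's multiplicity-one observation exactly.
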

\begin{proof} It is shown in \cite[Ex. 2.3]{ELMP:16} that the spectrum of the magnetic Laplacian operator restricted to smooth complex functions is given by $(k\pm t)^2$ for $k\in \mathbb{Z}$. The eigenfunctions are of the form $e^{\pm ik\theta}$. Since the magnetic Hodge Laplacian commutes with the Hodge star operator \cite[Cor. 3.2]{EGHP:23}, the spectrum of this operator restricted to $1$-forms reduces to the one of the magnetic Laplacian on functions. Therefore, we deduce that the spectrum is $(k\pm t)^2$ for $k>0$. However, the volume form $d\theta$ of $\mathbb{S}^1$ is an eigenform of the magnetic Hodge Laplacian since by the magnetic Bochner formula \cite[Prop.3.6] {EGHP:23} we have 
$$\Delta^{t\eta}d\theta=\Delta^{\mathbb{S}^1}(d\theta)-2it\mathcal{L}_{d\theta}d\theta+t^2d\theta=t^2d\theta.$$
In the last equality, we used the fact that $d\theta$ is a parallel $1$-form on $\mathbb{S}^1$, thus, $\Delta^{\mathbb{S}^1}(d\theta)=0$ and $\mathcal{L}_{d\theta}d\theta=0$. 

Finally, when $t=k$, the lowest eigenvalue is zero of multiplicity one which means that the first magnetic Betti number $b_1^{k\eta}(M):={\rm dim}({\rm Ker}(\Delta^{k\eta}))$ defined in \cite{EGHP:23} is equal to $1$. This finishes the proof of the theorem.
\end{proof}
In the following, we reproduce the example of the round sphere $M=\mathbb{S}^3$ equipped with the standard metric $g$ of curvature equal to $1$ where the magnetic potential is given by the unit Killing vector field that defines the Hopf fibration. We refer to \cite{EGHP:23} for more details.  The three vectors 
\begin{eqnarray}\label{eq:etaform}
Y_2 &=& -y_1 \partial_{x_1} + x_1 \partial_{y_1} - y_2 \partial_{x_2} + x_2 \partial_{y_2}, \\
Y_3 &=& -y_2 \partial_{x_1} - x_2 \partial_{y_1} + y_1 \partial_{x_2} + x_1 \partial_{y_2}, \nonumber\\
Y_4 &=& x_2 \partial_{x_1} - y_2 \partial_{y_1} - x_1 \partial_{x_2} + y_1 \partial_{y_2}\nonumber
\end{eqnarray}
are Killing and form an orthonormal basis of $T_{(z_1,z_2)}\mathbb{S}^3$ at every point $(z_1,z_2) = (x_1 + iy_1 , x_2 + iy_2) \in \mathbb{S}^3\subset\mathbb{C}^2$. Recall that $Y_2$ spans the vertical space of the Hopf fibration. A straightforward computation shows that these vector fields satisfy 
$$[Y_2,Y_3]=-2Y_4, \,\,\,\,[Y_2,Y_4]=2 Y_3,\, \,\,\, [Y_3,Y_4]=-2 Y_2.$$
The Christoffel symbols of the Levi-Civita connection of $g$ are expressed as
\begin{equation*} 
\nabla_{Y_j}^{\mathbb{S}^3} Y_k= \Gamma_{jk} Y_l
\end{equation*}
with $\{j,k,l\} = \{2,3,4\}$ for $k \neq j$, $\Gamma_{jj} = 0$ and $\Gamma_{23}=-\Gamma_{24} =-1$, $\Gamma_{32}=\Gamma_{43} = - \Gamma_{34} = -\Gamma_{42} =1$. Hence, we get that 
\begin{equation} \label{eq:covYiYj}
d^{{\mathbb{S}^3}} Y_2=2 Y_3\wedge Y_4,\, d^{{\mathbb{S}^3}} Y_3=-2 Y_2\wedge Y_4,\, d^{{\mathbb{S}^3}} Y_4=2 Y_2\wedge Y_3,\,\, \delta^{{\mathbb{S}^3}} Y_j=0,
\end{equation}
for $j=1,2,3$. 

In what follows, we consider the magnetic vector field $tY_2$, for $t>0$. We have shown in \cite{EGHP:23} that the spectrum of the corresponding magnetic Laplacian $\Delta^{tY_2}$ on complex  smooth functions is given by
\begin{equation} \label{eq:specmagS3}
k(k+2)+2(2p-k)t+t^2, \quad k\in\NN\cup\{0\},\,\, p\in\{0,\ldots, k\}.
\end{equation}
with multiplicities $(k+1)^2$ and the corresponding eigenfunctions are $f_{p,k}:=u^pv^{k-p}$, where $u=az_1+bz_2$ and $v=b\overline{z}_1-a\overline{z}_2$ for $(a,b)\in \mathbb{C}^2\setminus\{(0,0)\}$. The functions $f_{p,k}$ are the restrictions of homogeneous harmonic polynomials on $\mathbb{C}^2$ of degree $k$ to the unit sphere  $\mathbb{S}^3$. A straightforward computation (see, e.g., \cite[p. 30]{Hi:74} or \cite[Lemma III.7.1]{Pe:93}) yields
\begin{equation}\label{eq:Y2phi} 
Y_2(f_{p,k})=i(2p-k)f_{p,k}. 
\end{equation}
Also, one can easily check from Equations \eqref{eq:covYiYj} that $d^{2Y_2}(Y_3-iY_4)=0$ and $\delta^{2Y_2}(Y_3-iY_4)=0$, which means that $Y_3-iY_4$ is a non-vanishing element in ${\rm Ker}(\Delta^{2Y_2})$. Hence the first magnetic Betti number $b_1^{2Y_2}(M)$ is at least one. 

In what follows, we will prove that $b_1^{tY_2}(M)$ is equal to $1$, for $t\in \mathbb{N}, t\geq 2$, by computing the full spectrum of the magnetic Hodge Laplacian. Notice that this operator commutes with the Hodge star operator \cite[Cor. 3.2]{EGHP:23},  thus, it is sufficient to compute the spectrum on $1$-forms. To simplify the notation, we denote the vector field $Y_2$ by $\eta$.  

\begin{theorem} \label{thm:spherehodge} Let $(\mathbb{S}^3,g)$ be the round sphere equipped with the standard metric and consider the magnetic potential  $t\eta$, for $t>0$, where $\eta$ is the Killing vector field that defines the Hopf fibration. The spectrum of the magnetic Hodge Laplacian $\Delta^{t\eta}$ on $1$-forms consists of eigenvalues on exact and co-exact forms. On the family of exact $1$-forms, the eigenvalues are given by 
\begin{equation*} 
k(k+2)+2(2p-k)t+t^2, \quad k\geq 1,\,\, p\in\{0,\ldots, k\}.
\end{equation*}
with multiplicities $k(k+2)$. 
On the family of co-exact $1$-forms, the eigenvalues are given by 
$$(k+1)^2+2t(2p-k\pm 1)+t^2,$$
for $k>0,\, p\in\{0,\ldots, k\}$ with multiplicities $k(k+2)$. 
In particular, for $t=k+1$, the lowest eigenvalue is equal to $0$. Therefore, the first Betti numbers $b_1^{t\eta}(M)=1$, for $t\in \mathbb{N}, t\geq 2$.
\end{theorem}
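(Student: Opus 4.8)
The plan is to reduce everything to the already-computed scalar data together with one structural identity: for a unit Killing potential the magnetic Hodge Laplacian is a Lie-derivative twist of the ordinary one. Since $\Delta^{t\eta}$ commutes with the Hodge star \cite[Cor. 3.2]{EGHP:23}, it suffices to treat $1$-forms, as noted above. The first step is to record the magnetic Bochner identity
\[
\Delta^{t\eta}\omega=\Delta\omega-2it\,\mathcal{L}_{\eta}\omega+t^2\omega ,
\]
valid because $\eta=Y_2$ is a unit Killing field. It follows from Cartan's formula $d\iota_\eta+\iota_\eta d=\mathcal{L}_\eta$, from its $L^2$-adjoint $\delta(\eta\wedge\cdot)+\eta\wedge\delta(\cdot)=\mathcal{L}_\eta^{*}=-\mathcal{L}_\eta$ (the last equality since $\mathcal{L}_\eta$ is skew-adjoint for Killing $\eta$), and from the pointwise identity $\eta\wedge\iota_\eta+\iota_\eta(\eta\wedge\cdot)=|\eta|^2=1$, together with $\delta\eta=0$. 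This is precisely the magnetic Bochner formula of \cite{EGHP:23} on $\mathbb{S}^3$, and the $\mathbb{S}^1$ computation above is its degenerate case $\mathcal{L}_{d\theta}d\theta=0$.

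The second step is to observe that $\Delta$, the curl operator $*d$, and $\mathcal{L}_\eta$ pairwise commute — the last because a Killing field makes $\mathcal{L}_\eta$ commute with both $d$ and $*$ — so they can be simultaneously diagonalised, and $\Delta^{t\eta}$ is diagonal in the same basis. I would then use the Hodge decomposition $\Omega^1(\mathbb{S}^3,\CC)=d(\Omega^0)\oplus\delta(\Omega^2)$, with no harmonic part since $b_1(\mathbb{S}^3)=0$, and treat the two summands separately. On exact forms I take $\omega=d f_{p,k}$ with $f_{p,k}$ the scalar eigenfunctions of \eqref{eq:specmagS3}; then $\Delta df_{p,k}=k(k+2)df_{p,k}$ and, by \eqref{eq:Y2phi}, $\mathcal{L}_\eta df_{p,k}=d(Y_2 f_{p,k})=i(2p-k)df_{p,k}$, so the Bochner identity yields the eigenvalue $k(k+2)+2(2p-k)t+t^2$ (with $k\ge 1$ so that $df_{p,k}\neq 0$). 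On co-exact forms I use that they are eigenforms of $*d$ with eigenvalues $\pm(k+1)$, hence of $\Delta=(*d)^2$ with eigenvalue $(k+1)^2$; on each chirality eigenspace I diagonalise $\mathcal{L}_\eta$, whose eigenvalues turn out to be $i(2p-k\pm 1)$. The $\pm 1$ is the key point: the frame directions transverse to $\eta$ carry $\mathcal{L}_\eta$-weight, since $\mathcal{L}_{Y_2}(Y_3\pm iY_4)=\pm 2i\,(Y_3\pm iY_4)$ by Cartan's formula applied to \eqref{eq:covYiYj}, and this combines with the scalar weight to produce the shift. Feeding these weights into the Bochner identity gives the co-exact eigenvalues $(k+1)^2+2t(2p-k\pm 1)+t^2$.

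The remaining and, I expect, hardest part is the multiplicity count: the eigenvalues fall out immediately from the twist, but assigning multiplicities requires combining the dimension of each ordinary Hodge eigenspace — the degree-$k$ harmonic spaces of fixed $Y_2$-weight (dimension $k+1$) for the exact family, and the curl eigenspaces (dimension $k(k+2)$ per chirality) for the co-exact family — with the $\mathcal{L}_\eta$-weight decomposition, i.e. a Clebsch--Gordan bookkeeping for the tensor of the scalar $SU(2)$-weights against the weights $0,\pm 2$ of the adjoint frame. Care is needed here because the parametrisation by $(p,\pm)$ is mildly redundant: distinct pairs can yield the same weight, so one must collapse coincidences before reading off the multiplicities in the statement. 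Finally, the Betti-number claim follows by inspecting the list: the exact eigenvalues equal $(t-k)^2+2k>0$, whereas the minimum of the co-exact eigenvalues over all parameters is $\big((k+1)-t\big)^2$, attained at $p=0$ with the $-$ sign, which vanishes exactly when $t=k+1$; evaluating the level-$k$ co-exact eigenvalues at $t=k+1$ gives $4p(k+1)$, confirming that the kernel is spanned by the single extremal weight vector, so $\dim\ker\Delta^{t\eta}=1$ and $b_1^{t\eta}(\mathbb{S}^3)=1$ for $t\in\NN$, $t\ge 2$.
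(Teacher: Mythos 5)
Your framework --- the Killing--Bochner identity $\Delta^{t\eta}=\Delta-2it\mathcal{L}_\eta+t^2$ (this is \cite[Prop. 3.6]{EGHP:23}, which the paper also invokes), the commutation of $\Delta^{t\eta}$ with $d$, $\delta$, $*$, $\mathcal{L}_\eta$, and the exact-form computation via $df_{p,k}$ --- is sound and matches the paper. The genuine gap is in the co-exact case, which is the substance of the theorem. Your justification of the weights $i(2p-k\pm 1)$, namely adding the frame weights $0,\pm 2i$ of $Y_2^\flat,(Y_3\pm iY_4)^\flat$ to the scalar weights $i(2p-k)$ of degree-$k$ harmonics, cannot work, for a parity reason: $2p-k\equiv k \pmod 2$ and the frame weights are even, so every weight produced this way is congruent to $k$ mod $2$, while the weights you need are congruent to $k+1$. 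The hidden point is that a co-exact eigenform with $\Delta$-eigenvalue $(k+1)^2$, expanded in the invariant frame $\{Y_j^\flat\}$, has coefficient functions which are harmonics of degree $k-1$ or $k+1$, never $k$. The paper sidesteps this entirely by realising the co-exact spectrum on closed $2$-forms via the explicit eigenforms $\iota^*d(f_{p,k}dz^j)$, $\iota^*d(f_{p,k}d\bar z^j)$ of \cite{Pa:79}, where the odd shift comes from the ambient factor, $\mathcal{L}_\eta dz^j=i\,dz^j$ (weight $\pm i$), as in \eqref{eq:liederivativesphere} --- not from the intrinsic frame weights $\pm 2i$. Relatedly, your assignment of the two weight families to the two chirality spaces $\{*d=\pm(k+1)\}$ is incorrect as a statement: as $SU(2)\times SU(2)$-modules these eigenspaces are $V_{k-1}\otimes V_{k+1}$ and $V_{k+1}\otimes V_{k-1}$ (where $\dim V_m=m+1$, so each has dimension $k(k+2)$), and $\mathcal{L}_\eta$ acts through a single tensor factor; consequently one chirality space carries all $k+2$ weights $i\mu$, $\mu\in\{-(k+1),-(k-1),\dots,k+1\}$, including \emph{both} extremes, and the other carries only the $k$ interior weights. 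The union of your two lists coincides with the paper's list, which is why your final eigenvalues come out right, but the argument as written does not prove them. Executed correctly (via this representation-theoretic identification), your route would be a genuinely different proof from the paper's explicit-polynomial one; the frame heuristic, however, is not a correct execution of it.

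The second gap concerns the kernel. You defer the multiplicity bookkeeping and then use it: knowing that $(-,p=0)$ is the unique label whose eigenvalue vanishes at $t=k+1$ says nothing about the dimension of that label's eigenspace, so ``the kernel is spanned by the single extremal weight vector'' is unsupported. In fact the extremal weight space has dimension $k$: the forms
\begin{equation*}
\iota^*d\bigl(\bar z_1^{\,a+1}\bar z_2^{\,k-1-a}\,d\bar z_2\bigr)=(a+1)\,\iota^*\bigl(\bar z_1^{\,a}\bar z_2^{\,k-1-a}\,d\bar z_1\wedge d\bar z_2\bigr),\qquad a=0,\dots,k-1,
\end{equation*}
are linearly independent closed $2$-form eigenforms of $\Delta^{\mathbb{S}^3}$ with eigenvalue $(k+1)^2$ (they are of the type $\iota^*d(f_{0,k}d\bar z^j)$) and have $\mathcal{L}_\eta$-weight $-i(k+1)$, so their Hodge duals are $k$ independent co-exact $1$-forms annihilated by $\Delta^{(k+1)\eta}$. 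Thus your argument yields $\dim\ker\Delta^{t\eta}=1$ only for $t=2$ ($k=1$); for $t=k+1\geq 3$ the kernel has dimension $k$, and no Clebsch--Gordan bookkeeping can give the answer $1$. Note that the paper's own proof of this final assertion is equally abrupt (``taking $t=k+1$ and $p=0$'') and does not address the multiplicity either, so this discrepancy in the last claim of the theorem deserves scrutiny independently of your write-up; but as a proof of the statement as given, your argument leaves this step unestablished.
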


In Figure \ref{fig:magnlapS3eigval}, we give the graph of the eigenvalues of the magnetic Hodge Laplacian for $1$-forms on $\mathbb{S}^3$ as well as the graph of the first eigenvalue $\lambda_{1,1}^{t\eta}(\mathbb{S}^3)$ (as functions in the parameter $t$). We see that the first eigenvalue $\lambda_{1,1}^{t\eta}(\mathbb{S}^3) $ is less than $\lambda_{1,1}(\mathbb{S}^3)=3$, for all $t>0$. 

\begin{figure}[ht]
\centering

\begin{minipage}{.49\textwidth}
  \includegraphics[width= \textwidth]{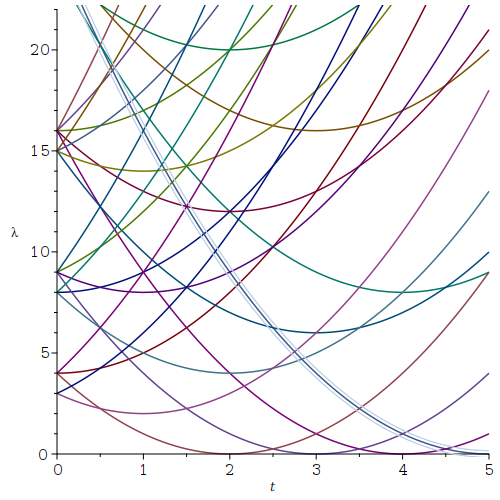}
\end{minipage}
\begin{minipage}{0.49\textwidth}
  \includegraphics[width= \textwidth]
  {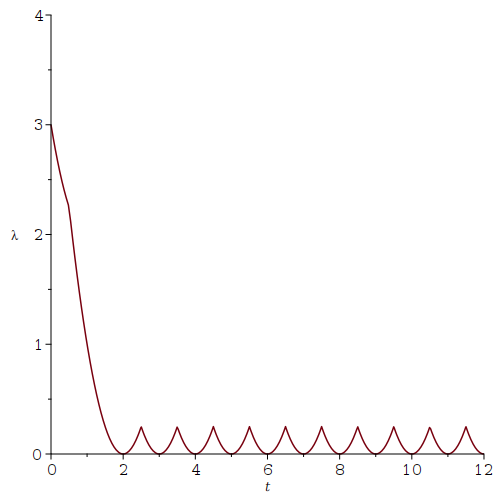}
\end{minipage}  
  \caption{Eigenvalues of $1$-forms of the magnetic Hodge Laplacian $\Delta^{t \eta}$ on $\mathbb{S}^3$ as functions over $t \in [0,5]$ (left) and first eigenvalue $\lambda^{t\eta}_{1,1}(\mathbb{S}^3)$ as a function over $t \in [0,12]$ (right).} \label{fig:magnlapS3eigval}
\end{figure}
\FloatBarrier
\begin{proof} Since $\Delta^{t\eta}$ commutes with the exterior differential $d^{{\mathbb{S}^3}}$ and with $\delta^{{\mathbb{S}^3}}$ (see \cite[Prop. 3.6]{EGHP:23}), it then commutes with $\Delta^{\mathbb{S}^3}$. Thus, we can consider eigenforms for $\Delta^\eta$ that are exact or co-exact which are eigenforms of the Hodge Laplacian $\Delta^{\mathbb{S}^3}$. The spectrum of $\Delta^{t\eta}$ restricted to the set of exact forms is given by \eqref{eq:specmagS3} with corresponding eigenforms $d^{{\mathbb{S}^3}}f_{p,k}$ with $k\geq 1$, since for $k=0$, we have $p=0$ and hence the function $f_{p,k}$ is constant. 

Now, to compute the spectrum of $\Delta^{t\eta}$ on co-exact $1$-forms, it is sufficient to  compute the spectrum on exact (or closed) $2$-forms. Since the magnetic Laplacian $\Delta^{t\eta}$ commutes with the Hodge Laplacian $\Delta^{\mathbb{S}^3}$, the exterior differential $d^{\mathbb{S}^3}$ and the Lie derivative $\mathcal{L}_\eta$ \cite[Prop. 3.6]{EGHP:23}, we can restrict the computation of the spectrum to a closed eigenform of the Hodge Laplacian which is also an eigenform of the Lie derivative. For this, we recall that the eigenvalues of the Hodge Laplacian on closed $2$-forms are equal to $(k+1)^2, k>0$ (see \cite[Prop. 2.1]{Pa:79} and \cite{GM:75}), with multiplicities $2k(k+2)$ and the eigenforms are of the form $\iota^*d\omega$, where $\omega=\sum_{j=1}^4 \omega_j dx^j$, $\omega_j$ are homogeneous harmonic polynomials of degree $k$ in $\mathbb{R}^4$, and $d$ is the exterior differential in $\mathbb{R}^4$. Here $\iota: \mathbb{S}^3\to \mathbb{R}^4$ is the inclusion map. Hence, by setting $dz^j=dx^j+idy^j$ for all $j=1,\ldots, 4$, the forms $\iota^*d(f_{p,k}dz^j)$ and $\iota^*d(f_{p,k}d\overline{z}^j)$ are eigenforms of the Hodge Laplacian and, therefore, we compute 
\begin{eqnarray}\label{eq:liederivativesphere}
\mathcal{L}_{\eta} (\iota^*d(f_{p,k}dz^j))&=&\iota^*d (\mathcal{L}_{\eta}(f_{p,k}dz^j))\nonumber\\
&=&\iota^*d \left(\eta(f_{p,k}) dz^j+f_{p,k}d(\eta(z^j))\right)\nonumber\\
&\stackrel{\eqref{eq:Y2phi}}{=}&\iota^*d(i(2p-k)f_{p,k} dz^j+if_{p,k}dz^j)\nonumber\\
&=&i(2p-k+1)\iota^*d(f_{p,k}dz^j).
\end{eqnarray}
In the third equality, we use the fact that $\eta(z^j)=iz^j$ which can be proven straightforwardly using the definition of $\eta$ in \eqref{eq:etaform}. In the same way, we get $\mathcal{L}_{\eta} (\iota^*d(f_{p,k}d\overline{z}^j))=i(2p-k-1)\iota^*d(f_{p,k}d\overline{z}^j)$.  Hence using the formula $\Delta^{t\eta}=\Delta^{\mathbb{S}^3}-2it\mathcal{L}_\eta+t^2|\eta|^2$ valid on $p$-forms \cite[Prop. 3.6]{EGHP:23}, we get that 
\begin{equation*}
 \Delta^{t\eta}(\iota^*d(f_{p,k}dz^j))=((k+1)^2+2t(2p-k+1)+t^2)\iota^*d(f_{p,k}dz^j).
 \end{equation*}
In the same way, we have 
\begin{equation*}
 \Delta^{t\eta}(\iota^*d(f_{p,k}d\overline{z}^j))=((k+1)^2+2t(2p-k-1)+t^2)\iota^*d(f_{p,k}d\overline{z}^j).
 \end{equation*}
Hence the spectrum of $\Delta^{t\eta}$ on closed $2$-forms on $\mathbb{S}^3$ is given by the families  
$$(k+1)^2+2t(2p-k\pm 1)+t^2,\,\,\, $$
for $k>0,\, p\in\{0,\ldots, k\}$, of multiplicities $k(k+2)$. The second part of the theorem is a direct consequence of the computation of the spectrum by taking $t=k+1$ and $p=0$. 
\end{proof}
 
\subsection{Computation of the spectrum of the magnetic Steklov operator on the Euclidean unit ball in $\mathbb{R}^2$}
In this section, we compute the spectrum of the magnetic Steklov operator with a certain magnetic potential given by a Killing vector field on the Euclidean unit ball in $\mathbb{R}^2$. 

\begin{theorem} Let $({\mathbb B}^2,g)$ be the Euclidean unit ball equipped with the metric $g=dr^2\oplus r^2 d\theta^2$ and let $\eta:=-y\partial_{x}+x \partial_{y}$ be the  Killing vector field in $\mathbb{R}^2$. The spectrum of the magnetic Steklov operator $T^{[1],t\eta}$ on differential $1$-forms on $\mathbb{S}^1$ associated with the magnetic potential $t\eta$ ($t>0$) is given by  
$$t\frac{{\rm cosh}(t/2)}{{\rm sinh}(t/2)}, \quad \frac{(-t)^{k+1}}{k!\left(e^{-t}-\sum_{j=0}^k\frac{(-t)^{j}}{j!}\right)},\,\,\, \frac{t^{k+1}}{k!\left(e^{t}-\sum_{j=0}^k\frac{t^{j}}{j!}\right)} \, \text{ for } k \geq 1.$$
In particular, the lowest eigenvalue $\frac{t^2}{e^t-1-t}$ is less than $\sigma_{1,1}(\mathbb{B}^2)=2$.
\end{theorem}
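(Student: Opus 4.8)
The plan is to exploit rotational symmetry and solve the harmonic extension problem one Fourier mode at a time. First I would record that the magnetic potential dual to the Killing field $\eta=-y\partial_x+x\partial_y$ is the $1$-form $r^2\,d\theta$ (so $\eta^\sharp=\partial_\theta$) and that the associated magnetic field $d(t\eta)=2t\,dx\wedge dy$ is constant. Since $T^{[1],t\eta}$ commutes with the rotation action on $\Omega^1(\mathbb{S}^1,\CC)$, it is diagonalised by the modes $\psi_k:=e^{ik\theta}\,d\theta$, $k\in\ZZ$, and acts as a scalar $\sigma_k$ on each (each mode spanning a one-dimensional space). For each $k$ I would seek the $\eta$-harmonic extension in the form $\hat\omega=e^{ik\theta}\bigl(\alpha(r)\,dr+\gamma(r)\,d\theta\bigr)$ with $\nu\lrcorner\hat\omega=0$ (i.e. $\alpha(1)=0$) and $\iota^*\hat\omega=\psi_k$ (i.e. $\gamma(1)=1$), and then read off $\sigma_k$ from the quadratic-form identity behind Proposition~\ref{prop:minmax} (the same integration by parts that shows $T^{[k],\eta}$ is self-adjoint): for an eigenform one has $\sigma_k=\int_M(|d^{t\eta}\hat\omega|^2+|\delta^{t\eta}\hat\omega|^2)\,{\rm dvol}_M\big/\int_{\partial M}|\hat\omega|^2\,{\rm dvol}_{\partial M}$.

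The computational heart is solving $\Delta^{t\eta}\hat\omega=0$ explicitly. Writing $h:=k+tr^2$, a direct calculation gives $d^{t\eta}\hat\omega=(\gamma'-ih\alpha)\,e^{ik\theta}\,dr\wedge d\theta$ and $\delta^{t\eta}\hat\omega=(-\alpha'-\alpha/r-ih\gamma/r^2)\,e^{ik\theta}$, so $\Delta^{t\eta}\hat\omega=0$ becomes a first-order system for the pair $(\tilde F,q):=$ (the radial coefficients of $d^{t\eta}\hat\omega$ and $\delta^{t\eta}\hat\omega$). Setting $\tilde p:=\tilde F/(ir)$, this system diagonalises in $P:=\tilde p+q$ and $Q:=\tilde p-q$, which satisfy $P'=(h/r)P$ and $Q'=-(h/r)Q$; hence $P=C_1 r^{k}e^{tr^2/2}$ and $Q=C_2 r^{-k}e^{-tr^2/2}$ (the constant-field Landau-level profiles $z^k e^{t|z|^2/2}$ and its conjugate). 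Knowing $d^{t\eta}\hat\omega$ and $\delta^{t\eta}\hat\omega$, I would recover $(\alpha,\gamma)$ from the two defining first-order relations; in the frame-adapted variables $\mu:=\alpha-i\gamma/r$ and $\lambda:=\alpha+i\gamma/r$ (the $dz$- and $d\bar z$-coefficients of $\hat\omega$) these relations decouple into the scalar linear ODEs $\mu'+\tfrac{1-h}{r}\mu=Q$ and $\lambda'+\tfrac{1+h}{r}\lambda=-P$, with integrating factors $r^{1-k}e^{-tr^2/2}$ and $r^{1+k}e^{tr^2/2}$ respectively.

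Next I would impose smoothness at the origin and the two boundary conditions. Smoothness of $\hat\omega$ at $r=0$ forces the $dz$- and $d\bar z$-coefficients to vanish to orders $r^{|k-1|}$ and $r^{|k+1|}$; this selects a single exponential branch (for $k\ge 1$ it kills $C_2$ and fixes the integration constants so that the $\lambda$-integral is taken from $0$, and symmetrically for $k\le -1$). The conditions $\alpha(1)=0$, $\gamma(1)=1$ translate into $\mu(1)=-i$, $\lambda(1)=i$, which determine the surviving constants. Feeding the result into the Rayleigh quotient reduces $\sigma_k$ to $\tfrac12\int_0^1 r(|P|^2+|Q|^2)\,dr$ with the constants fixed by the boundary data; carrying this out and substituting $s=r^2$ yields $\sigma_k=e^{t}\big/\!\int_0^1 s^{|k|}e^{ts}\,ds$ for $k\ge 1$ and $\sigma_k=e^{-t}\big/\!\int_0^1 s^{|k|}e^{-ts}\,ds$ for $k\le -1$. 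For $k=0$ both branches are regular, and the same scheme gives $\sigma_0=\frac{t\sinh t}{2\sinh^2(t/2)}=t\coth(t/2)$.

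The remaining work is to recognise the closed forms. Evaluating the elementary integrals $\int_0^1 s^m e^{\pm ts}\,ds=t^{-(m+1)}\int_0^{\pm t}w^m e^{w}\,dw$ (equivalently, lower incomplete Gamma functions, $\int_0^1 s^m e^{-ts}\,ds=\frac{m!}{t^{m+1}}(1-e^{-t}\sum_{j=0}^m t^j/j!)$) and simplifying converts the two families into $\frac{t^{m+1}}{m!\left(e^{t}-\sum_{j=0}^m t^j/j!\right)}$ (for $k=-m\le -1$) and $\frac{(-t)^{m+1}}{m!\left(e^{-t}-\sum_{j=0}^m (-t)^j/j!\right)}$ (for $k=m\ge 1$), which is the asserted spectrum. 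Finally, $\frac{t^2}{e^t-1-t}$ is the smallest eigenvalue, and $\frac{t^2}{e^t-1-t}<2$ is equivalent to $e^t>1+t+\tfrac{t^2}{2}$, which holds for every $t>0$. I expect the main obstacle to be the regularity analysis at $r=0$: the radial system is singular there, and one must argue carefully which exponential branch and which constants of integration keep $\hat\omega$ smooth across the origin; the decoupling into the $\mu,\lambda$ variables is exactly what makes this step manageable.
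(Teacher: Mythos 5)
Your proposal is correct, and the end results match the paper's: I verified your formulas for $d^{t\eta}\hat\omega$ and $\delta^{t\eta}\hat\omega$, the decoupled first-order system $P'=(h/r)P$, $Q'=-(h/r)Q$ with $h=k+tr^2$, the recovery equations $\mu'+\tfrac{1-h}{r}\mu=Q$ and $\lambda'+\tfrac{1+h}{r}\lambda=-P$, and the resulting values $e^{t}/\int_0^1 s^{k}e^{ts}\,ds$ (for $k\geq 1$), $e^{-t}/\int_0^1 s^{|k|}e^{-ts}\,ds$ (for $k\leq -1$) and $t\coth(t/2)$ (for $k=0$), which convert via the incomplete-Gamma identity into exactly the stated closed forms. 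Your route is, however, genuinely different in its computational core. The paper writes the extension as $iQe^{ik\theta}d\theta+Pe^{ik\theta}dr$, plugs it into the Bochner-type formula $\Delta^{t\eta}=\Delta^{\mathbb{R}^2}-2it\mathcal{L}_\eta+t^2r^2$, and obtains a coupled \emph{second-order} system for $(P,Q)$, which it decouples by the substitutions $P=r^{k-1}\hat P$, $Q=r^k\hat Q$, $\hat Z=\hat P+\hat Q$, $\hat W=\hat P-\hat Q$; the solutions are the incomplete exponential sums, and the eigenvalue is read off pointwise as $Q'(1)$ by applying $T^{[1],t\eta}\omega=-\nu\lrcorner d^{t\eta}\hat\omega$ at the boundary. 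You instead exploit the Dirac-type factorization: magnetic harmonicity is a \emph{first-order} system for the coefficients of $(d^{t\eta}\hat\omega,\delta^{t\eta}\hat\omega)$, whose solutions are the Landau-level profiles $r^{\pm k}e^{\pm tr^2/2}$, after which the form itself is recovered by elementary integrating factors in the $dz$, $d\bar z$ frame; the eigenvalue is then obtained from the quadratic-form (Rayleigh quotient) identity rather than from $Q'(1)$. What your approach buys: the ODE analysis never requires guessing solutions of second-order equations, and the regularity selection at $r=0$ becomes a transparent statement about which exponential branch is admissible. What it costs: you must first justify that each mode $e^{ik\theta}d\theta$ \emph{is} an eigenform before the Rayleigh quotient gives its eigenvalue — your symmetry argument (rotation-invariance of $g$, $\mathbb{B}^2$ and $t\eta$ plus one-dimensionality of the isotypic components) does this correctly, whereas the paper's pointwise evaluation shows proportionality $T\omega=Q'(1)\omega$ directly and needs no such step. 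One small remark applying to both your write-up and the paper's: the identification of $\tfrac{t^2}{e^t-1-t}$ as the \emph{lowest} eigenvalue is asserted rather than argued; it follows from the facts that $t^{k+1}/\bigl(k!\,(e^t-\sum_{j=0}^k t^j/j!)\bigr)$ is increasing in $k$, that the other family equals $(k+1)e^{a_k}\geq 2$ for some $a_k\in[0,t]$ by Taylor--Lagrange, and that $t\coth(t/2)>2$ for $t>0$.
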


The eigenvalues of the magnetic Steklov operator $T^{[1],t\eta}$ 
acting on $1$-forms on $\mathbb{S}^1$ are illustrated in Figure \ref{fig:SteklovS11forms}.

\begin{figure}[ht]
\centering
  \includegraphics[width= 0.6\textwidth]{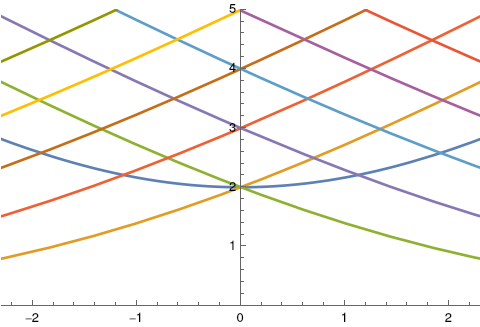}
  \caption{Eigenvalues of the magnetic Steklov operator $T^{[1],t\eta}$ on $1$-forms in $\mathbb{S}^1$.} \label{fig:SteklovS11forms}
\end{figure}
\FloatBarrier

We follow the same strategy of proof that was used in \cite{RS:14} for the non-magnetic case, and we must deal with the additional terms that arise in the magnetic setting.

\begin{proof} It is not difficult to check that the form $e^{ik\theta}d\theta$ is an eigenform of the Laplacian associated with the eigenvalue $k^2$, for $k\geq 0$.
We aim to find an $\eta$-harmonic extension of the differential form $\omega=ie^{ik\theta}d\theta$ on $\mathbb{S}^1$ of the form $\hat\omega=iQ e^{ik\theta}d\theta+P e^{ik\theta}dr$, where $P$ and $Q$ are smooth functions in $r$ to be determined. First we compute 
\begin{eqnarray*}
\mathcal{L}_\eta\hat\omega&=&\eta\lrcorner d(iQ e^{ik\theta}d\theta+P e^{ik\theta}dr)+d(\eta\lrcorner (iQ e^{ik\theta}d\theta+Pe^{ik\theta}dr)) \\
&=&\eta\lrcorner ((iQ'-ikP)e^{ik\theta} dr\wedge d\theta)+id(Qe^{ik\theta})\\
&=&-i(Q'-kP)e^{ik\theta} dr+iQ'e^{ik\theta}dr-kQe^{ik\theta}d\theta\\
&=&ik\hat\omega.
\end{eqnarray*}
Now, we use \cite[Prop. 3.6]{EGHP:23} and \cite[Lem. 6]{RS:14} to compute the magnetic Laplacian for any $t>0$, 
\begin{eqnarray*}
\Delta^{t\eta}\hat\omega&=&\Delta^{\mathbb{R}^2}\hat \omega-2it\mathcal{L}_\eta\hat\omega+t^2r^2\hat\omega\\
&=&\frac{Q}{r^2}\Delta^{\mathbb{S}^1}(ie^{ik\theta}d\theta)-(Q''-\frac{1}{r}Q')ie^{ik\theta} d\theta-\frac{2P}{r}d^{\mathbb{S}^1}(e^{ik\theta})\\&&+\left(\frac{P}{r^2}\Delta^{\mathbb{S}^1}(e^{ik\theta})-(P'+\frac{1}{r}P)'e^{ik\theta}-\frac{2Q}{r^3}\delta^{\mathbb{S}^1}(ie^{ik\theta}d\theta)\right)dr\\
&&+(2kt+t^2r^2)(iQ e^{ik\theta}d\theta+P e^{ik\theta}dr)\\
&=&\left(\frac{k^2Q}{r^2}-(Q''-\frac{1}{r}Q')-\frac{2kP}{r}+(2kt+t^2 r^2)Q\right)ie^{ik\theta}d\theta\\
&&+\left(\frac{k^2 P}{r^2}-(P'+\frac{1}{r}P)'-\frac{2kQ}{r^3}+(2kt+t^2 r^2)P\right)e^{ik\theta}dr.
\end{eqnarray*}
Hence $\hat\omega$ is $\eta$-harmonic if and only if $P$ and $Q$ solve the differential system 
\begin{equation}\label{eq:systemdiff}
\left\{
\begin{matrix}
 \frac{k^2Q}{r^2}-(Q''-\frac{1}{r}Q')-\frac{2kP}{r}+(2kt+t^2 r^2)Q=0,\\\\
\frac{k^2P}{r^2}-(P'+\frac{1}{r}P)'-\frac{2kQ}{r^3}+(2kt+t^2 r^2)P=0,
\end{matrix}\right.
\end{equation}
with the initial conditions $P(0)=Q(0)=0$ and $P'(0)=Q'(0)=0$. 

The condition $\nu\lrcorner \hat\omega=0$ on $\mathbb{S}^1$ is equivalent to $P(1)=0$.  Bear in mind that, for $t=0$, the solution of the system \eqref{eq:systemdiff} reduces to the one without magnetic potential and is given, for $k\geq 1$, by $P(r)=r^{k-1}(1-r^2)$ and $Q(r)=r^k(1+r^2)$. 

In order to completely solve the system, we first consider the case when $k=0$. In this case, it is not difficult to check that the only solution satisfying $P(1)=0$ and $P'(0)=0$ is $P=0$ and $Q=\frac{{\rm sinh}(\frac{tr^2}{2})}{{\rm sinh}(\frac{t}{2})}$. Here, we use the normalising factor $Q(1)=1$.

For $k\geq 1$, we consider the following change of variables $P:=r^{k-1} \hat P$ and $Q:=r^{k}\hat Q$, where $\hat P$ and $\hat Q$ are two smooth functions in $r$ such that $\hat P(0)=1$ and $\hat Q(0)=1$ to be determined. By a straightforward computation and after replacing $P$ and $Q$, the above system becomes 
\begin{equation}\label{eq:systemdiffmodi}
\left\{
\begin{matrix}
 (2k+2ktr^2+t^2r^4)\hat Q+r(1-2k)\hat Q'-r^2\hat Q''-2k\hat P=0\\\\
 (2k+2ktr^2+t^2r^4)\hat P+r(1-2k)\hat P'-r^2\hat P''-2k\hat Q=0
\end{matrix}\right.
\end{equation}
By setting $\hat Z:=\hat P+\hat Q$ and $\hat W:=\hat P-\hat Q$, the system  \eqref{eq:systemdiffmodi}  becomes 
\begin{equation}\label{eq:systemdiffmodi2}
\left\{
\begin{matrix}
(2ktr^2+t^2r^4)\hat Z+r(1-2k)\hat Z'-r^2\hat Z''=0\\\\
 (4k+2ktr^2+t^2r^4)\hat W+r(1-2k)\hat W'-r^2\hat W''=0,
\end{matrix}\right.
\end{equation}
with the initial condition $\hat Z(0)=2, \hat Z'(0)=0, \hat W(0)=0, \hat W'(0)=0$. Now, one can check that $\hat Z=2 e^{tr^2/2}$ and that 
$$\hat W=c\frac{(-1)^k k!e^{-tr^2/2}}{r^{2k}}\left(e^{tr^2}\sum_{j=0}^k\frac{(-tr^2)^j}{j!}-1\right )$$
is a solution of \eqref{eq:systemdiffmodi2}, where $c$ is a constant. Hence, for $k\geq 1$, we deduce the following expressions for $\hat P$ and $\hat Q$,  
$$\hat P=e^{tr^2/2}+c\frac{(-1)^k k!e^{-tr^2/2}}{2r^{2k}}\left(e^{tr^2}\sum_{j=0}^k\frac{(-tr^2)^j}{j!}-1\right ),$$
and 
$$\hat Q=e^{tr^2/2}-c\frac{(-1)^k k!e^{-tr^2/2}}{2r^{2k}}\left(e^{tr^2}\sum_{j=0}^k\frac{(-tr^2)^j}{j!}-1\right ).$$
The condition $P(1)=0$ is equivalent to  $\hat P(1)=0$ which is equivalent to saying 
\begin{equation}\label{eq:constantc}
-2=c(-1)^k k!\left(\sum_{j=0}^k\frac{(-t)^j}{j!}-e^{-t}\right).
\end{equation}
Now, to check that the right-hand side of \eqref{eq:constantc} does not vanish,  we just use the Taylor-Lagrange  formula for $e^{-t}$ which says that, there exists some real number $a_{t}\in [0,t]$ such that 
\begin{equation}\label{eq:taylorexp}
e^{-t}=\sum_{j=0}^k\frac{(-t)^j}{j!}+\frac{(-1)^{k+1}e^{-a_{t}}}{(k+1)!}t^{k+1}.
\end{equation}
Hence, we deduce that 
$c=\frac{-2}{(-1)^k k!\left(\sum_{j=0}^k\frac{(-t)^j}{j!}-e^{-t}\right)}$.
Thus, for $k\geq 1$, after dividing by $Q(1)$, $Q$ is equal to  
$$Q(r)=\frac{r^ke^{t(r^2-1)/2}}{2}+\frac{e^{t(r^2-1)/2}\left(\sum_{j=0}^k\frac{(-tr^2)^{j}}{j!}-e^{-tr^2}\right)}{2r^{k}\left(\sum_{j=0}^k\frac{(-t)^{j}}{j!}-e^{-t}\right)}.$$

Next, we want to find the $\eta$-harmonic extension of $ie^{-ik\theta}$, which takes the form $Q ie^{-ik\theta}d\theta+P e^{-ik\theta}dr$. A similar computation as before shows that the corresponding differential system is 
\begin{equation*}
\left\{
\begin{matrix}
 \frac{k^2Q}{r^2}-(Q''-\frac{1}{r}Q')+\frac{2kP}{r}+(-2kt+t^2 r^2)Q=0\\\\
\frac{k^2P}{r^2}-(P'+\frac{1}{r}P)'+\frac{2kQ}{r^3}+(-2kt+t^2 r^2)P=0,
\end{matrix}\right.
\end{equation*}
with the initial conditions $P(0)=Q(0)=0$ and $P'(0)=Q'(0)=0$. As we did before, we set $P:=r^{k-1} \hat P$ and $Q:=r^{k}\hat Q$, where $\hat P$ and $\hat Q$ are two smooth functions in $r$ such that $\hat P(0)=1$ and $\hat Q(0)=-1$. Hence, the above system becomes 
\begin{equation*}
\left\{
\begin{matrix}
 (2k-2ktr^2+t^2r^4)\hat Q+r(1-2k)\hat Q'-r^2\hat Q''+2k\hat P=0\\\\
 (2k-2ktr^2+t^2r^4)\hat P+r(1-2k)\hat P'-r^2\hat P''+2k\hat Q=0.
\end{matrix}\right.
\end{equation*}
Therefore, by adding and subtracting these two equations, after setting $\hat Z=\hat P+\hat Q$ and $\hat W=\hat P-\hat Q$, the above system becomes equivalent to 
\begin{equation*}
\left\{
\begin{matrix}
 (4k-2ktr^2+t^2r^4)\hat Z+r(1-2k)\hat Z'-r^2\hat Z''=0\\\\
(-2ktr^2+t^2r^4)\hat W+r(1-2k)\hat W'-r^2\hat W''=0
\end{matrix}\right.
\end{equation*}
with the initial condition $\hat Z(0)=0, \hat Z'(0)=0, \hat W(0)=2, \hat W'(0)=0$. This is exactly the same system as \eqref{eq:systemdiffmodi2} by switching the parameter $t$ to $-t$. Thus the rest of the proof follows the same steps as in the previous case. Hence we get 
$$Q(r)=\frac{r^ke^{-t(r^2-1)/2}}{2}+\frac{e^{-t(r^2-1)/2}\left(\sum_{j=0}^k\frac{(tr^2)^{j}}{j!}-e^{tr^2}\right)}{2r^{k}\left(\sum_{j=0}^k\frac{t^{j}}{j!}-e^{t}\right)}.$$

Finally, to compute the eigenvalues of the magnetic Steklov operator in both cases, we apply it to $\omega=ie^{ik\theta} d\theta$ (resp. $ie^{-ik\theta} d\theta$) and use its $\eta$-harmonic extension to get  
\begin{eqnarray*}
T^{[1],t\eta}\omega&=&\partial r\lrcorner d^{t\eta}\hat\omega\\
&=&\partial r\lrcorner d(iQ e^{ik\theta}d\theta+P e^{ik\theta}dr)+i\partial r\lrcorner\left(t\eta\wedge (iQ e^{ik\theta}d\theta+P e^{ik\theta}dr)\right)\\
&=&Q'(1)\omega.
\end{eqnarray*}
In the last equality, we use the fact that $\eta=d\theta$ on $\mathbb{S}^1$. Thus, the eigenvalues are equal to 
$$\frac{(-t)^{k+1}}{k!\left(e^{-t}-\sum_{j=0}^k\frac{(-t)^{j}}{j!}\right)}, \,\, \left(\textrm{resp.}\,\,\, \frac{t^{k+1}}{k!\left(e^{t}-\sum_{j=0}^k\frac{t^{j}}{j!}\right)}\right)$$
for $k\geq 1$, and for $k=0$, to $t\frac{{\rm cosh}(t/2)}{{\rm sinh}(t/2)}$. 
\end{proof}

\subsection{Computation of the spectrum of the magnetic Steklov operator on the Euclidean unit ball in $\mathbb{R}^4$}

In this section, we compute the spectrum of the magnetic Steklov operator with a certain magnetic potential given by a Killing vector field on $\mathbb{B}^4$. We treat the case of $1$-forms, since the other degrees can be done in the same way. 

\begin{theorem} Let $(\mathbb{B}^4,g)$ be the Euclidean unit ball equipped with the metric $g=dr^2\oplus r^2 d\theta^2$ and let $\eta:=-y_1 \partial_{x_1} + x_1 \partial_{y_1} - y_2 \partial_{x_2} + x_2 \partial_{y_2}$ be the Killing vector field in $\mathbb{R}^4$. The spectrum of the magnetic Steklov operator $T^{[1],t\eta}$ restricted to exact $1$-eigenforms of the Hodge Laplacian on $\mathbb{S}^3$ is given by 
$$ \left(\sigma_{(k,p),1}^{t \eta}\right)'=\frac{1}{k+1}\left[ k\left(p+\frac{3}{2}\right)\frac{L_{k-\frac{1}{2}-p}^{(-(k+2))}(t)}{L_{k+\frac{1}{2}-p}^{(-(k+2))}(t)} + (k+2)\left(p+\frac{1}{2}\right)\frac{L_{k-\frac{3}{2}-p}^{(-k)}(t)}{L_{k-\frac{1}{2}-p}^{(-k)}(t)} + k^2-(2p+t)(k+1)-1 \right], $$
for $k\geq 1$ and $p\in\{0,\ldots,k\}$. The spectrum of the magnetic Steklov operator restricted to co-exact $1$-eigenforms of the Hodge Laplacian on $\mathbb{S}^3$ is given by
$$ \left( \sigma_{(k,p),1,\pm}^{t \eta}\right)''=\frac{-2tL^{(-k)}_{k-1\pm \frac{1}{2}-p}(t)}{L^{(-(k+1))}_{k\pm\frac{1}{2}-p}(t)}-(k+t+1)
$$
The lowest eigenvalue is equal to 
$$-\frac{3t L^{1-k}_{-1/2}(t)}{2L^{-1}_{-1/2}(t)}-\frac{t L^{-2}_{1/2}(t)}{2L^{-3}_{3/2}(t)}-\frac{2t+3}{2},$$
which is less than $\sigma_{1,1}(\mathbb{B}^4)=\frac{3}{2}$, for $t$ small $(0<t<2.99)$. 
\end{theorem}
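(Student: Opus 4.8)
The plan is to follow the cone-geometry strategy used above for $\mathbb{B}^2$ and by Raulot--Savo in \cite{RS:14} for the non-magnetic case, decomposing the boundary $1$-forms on $\mathbb{S}^3$ into exact and co-exact eigenforms of the Hodge Laplacian and constructing, for each, an explicit $\eta$-harmonic extension into the ball. Writing the metric as $dr^2\oplus r^2 g_{\mathbb{S}^3}$, I would first invoke the facts recalled in the $\mathbb{S}^3$ section: the eigenfunctions $f_{p,k}$ satisfy $\eta(f_{p,k})=i(2p-k)f_{p,k}$, and both the exact eigenforms $d^{\mathbb{S}^3}f_{p,k}$ and the co-exact ones are eigenforms of $\mathcal{L}_\eta$, with the latter splitting into two families of $\mathcal{L}_\eta$-eigenvalue $i(2p-k\pm 1)$ exactly as in Equation \eqref{eq:liederivativesphere}. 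This $\pm$ dichotomy is precisely what produces the two co-exact branches $(\sigma_{(k,p),1,\pm}^{t\eta})''$. For an exact boundary eigenform $d^{\mathbb{S}^3}\phi$ I would posit the radial ansatz $\hat\omega=P(r)\,\phi\,dr+Q(r)\,d^{\mathbb{S}^3}\phi$, mirroring the $\mathbb{B}^2$ ansatz and generalizing the non-magnetic extension \eqref{eq:harmonicextensionomega}; for co-exact forms I would use the analogous radial combination of the co-exact form and its $dr$-companion.

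The central computation is to impose $\Delta^{t\eta}\hat\omega=0$. Using the magnetic Bochner formula $\Delta^{t\eta}=\Delta^{\mathbb{R}^4}-2it\mathcal{L}_\eta+t^2|\eta|^2$ from \cite[Prop. 3.6]{EGHP:23}, together with $|\eta|^2=r^2$ and the known action of $\mathcal{L}_\eta$ on the boundary eigenforms, I would reduce harmonicity to a linear second-order system of ODEs in $r$ for the radial functions, in direct parallel with systems \eqref{eq:systemdiff}--\eqref{eq:systemdiffmodi2} but now carrying the extra quadratic potential $t^2r^2$ coming from $|\eta|^2$. The decisive step is to decouple this system by taking suitable sums and differences of the unknowns, as in the $\mathbb{B}^2$ case, and then to apply a change of variables $s=tr^2$ combined with a power factor $r^{\alpha}$ and an exponential factor $e^{-tr^2/2}$. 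This should convert each decoupled equation into the generalized Laguerre equation $s\,y''+(\beta+1-s)\,y'+n\,y=0$; smoothness (regularity at $r=0$) then forces the polynomial solution $L_n^{(\beta)}(s)$, with $\beta$ and $n$ read off from $k$ and $p$. This is exactly how the parameters $\beta=-k,-(k+1),-(k+2)$ and the half-integer indices $k\pm\tfrac12-p$ appearing in the statement are generated.

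With the radial functions identified as generalized Laguerre polynomials, I would fix the remaining free constants by imposing $\nu\lrcorner\hat\omega=0$, i.e. vanishing of the $dr$-component at $r=1$, and then compute the eigenvalue by evaluating $T^{[1],t\eta}\omega=-\nu\lrcorner d^{t\eta}\hat\omega$ at the boundary, exactly as in the final display of the $\mathbb{B}^2$ proof. This evaluation amounts to reading off a logarithmic derivative of the radial solution at $r=1$; the Laguerre ratios $L_{k-\frac12-p}^{(\cdots)}(t)/L_{k+\frac12-p}^{(\cdots)}(t)$ in the stated eigenvalues then arise from expressing $y'(1)/y(1)$ via the derivative identity $\tfrac{d}{ds}L_n^{(\beta)}(s)=-L_{n-1}^{(\beta+1)}(s)$ and the standard contiguous relations for Laguerre polynomials. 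For the final comparison I would specialize to the lowest branch, substitute the explicit half-integer-index values, and check the inequality against $\sigma_{1,1}(\mathbb{B}^4)=\tfrac32$; since the closed form is transcendental in $t$, the range $0<t<2.99$ is to be confirmed by a Taylor expansion near $t=0$ together with a monotonicity estimate, backed by the numerical evidence.

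I expect the main obstacle to be the ODE step: correctly decoupling the magnetic radial system and matching it to the Laguerre equation with the precise shifted parameters and half-integer orders. Keeping the negative parameters $\beta=-k,-(k+1),-(k+2)$ and indices $k\pm\tfrac12-p$ consistent, and ensuring that the regular (polynomial) branch is the one selected, is where the bookkeeping is most delicate, and it is what ultimately yields the specific Laguerre ratios in both the exact and co-exact eigenvalue formulas.
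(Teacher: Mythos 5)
Your proposal follows essentially the same route as the paper's proof: the exact/co-exact decomposition of the boundary eigenforms, the radial ansatz $\hat\omega = P(r)f_{p,k}\,dr + Q(r)\,d^{\mathbb{S}^3}f_{p,k}$ together with the eigenvalue relations for $\mathcal{L}_\eta$, reduction via $\Delta^{t\eta}=\Delta^{\mathbb{R}^4}-2it\mathcal{L}_\eta+t^2r^2$ to a radial ODE system, decoupling by (weighted) sums and differences, identification of the regular solutions in terms of generalized Laguerre functions, and evaluation of the eigenvalue as $Q'(1)$ after imposing $P(1)=0$; the final inequality over $0<t<2.99$ is likewise left to expansion/numerics in the paper. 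Two harmless imprecisions: the paper's co-exact ansatz is simply $\hat\omega=Q(r)\omega$ with no $dr$-companion (a co-exact $1$-form has no scalar potential to pair with $dr$), and since the Laguerre indices $k\pm\tfrac{1}{2}-p$ are half-integers the regular solutions are Laguerre (confluent hypergeometric) functions rather than polynomials --- neither affects the argument.
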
 
\begin{proof}
We consider $\omega$ an exact $1$-form on $\mathbb{S}^3$, which is an eigenform of the Laplacian $\Delta^{\mathbb{S}^3}$. For $k\geq 1$ and $p\in\{0,\ldots,k\}$, we write $\omega=d^{\mathbb{S}^3}f_{p,k}$ where $f_{p,k}$ are as defined previously. 
As we did before, we need to find an $\eta$-harmonic extension of $\omega$ which has the form $\hat\omega=Q d^{\mathbb{S}^3}f_{p,k}+Pf_{p,k}dr$, where $P$ and $Q$ are two smooth functions in $r$ to be determined. With the help of \eqref{eq:Y2phi}, we have 
\begin{eqnarray*}
\mathcal{L}_\eta\hat\omega&=&Q d^{\mathbb{S}^3}(\eta(f_{p,k}))+P\eta(f_{p,k})dr\\
&=&i(2p-k)Q d^{\mathbb{S}^3}f_{p,k}+i(2p-k)Pf_{p,k}dr\\
&=&i(2p-k)\hat\omega.
\end{eqnarray*}
Using \cite[Prop. 3.6]{EGHP:23} and \cite[Lem. 6]{RS:14}, for any $t>0$, we compute  
\begin{eqnarray*}
\Delta^{t\eta}\hat\omega&=&\Delta^{\mathbb{R}^4}\hat \omega-2it\mathcal{L}_\eta\hat\omega+t^2r^2\hat\omega\\
&=&\frac{Q}{r^2}\Delta^{\mathbb{S}^3}(d^{\mathbb{S}^3} f_{p,k})-(Q''+\frac{1}{r}Q')d^{\mathbb{S}^3} f_{p,k}-\frac{2P}{r}d^{\mathbb{S}^3}f_{p,k}\\&&+\left(\frac{P}{r^2}\Delta^{\mathbb{S}^3}(f_{p,k})-(P'+\frac{3}{r}P)'f_{p,k}-\frac{2Q}{r^3}\delta^{\mathbb{S}^3}(d^{\mathbb{S}^3}f_{p,k})\right)dr\\
&&+(2(2p-k)t+t^2r^2)(Q d^{\mathbb{S}^3}f_{p,k}+Pf_{p,k}dr)\\
&=&\left(\frac{k(k+2)Q}{r^2}-(Q''+\frac{1}{r}Q')-\frac{2P}{r}+(2(2p-k)t+t^2 r^2)Q\right)d^{\mathbb{S}^3}f_{p,k}\\
&&+\left(\frac{k(k+2) P}{r^2}-(P'+\frac{3}{r}P)'-\frac{2k(k+2)Q}{r^3}+(2(2p-k)t+t^2 r^2)P\right)f_{p,k}dr.
\end{eqnarray*}
Hence $\hat\omega$ is $\eta$-harmonic if and only if $P$ and $Q$ solve the differential system 
\begin{equation*}
\left\{
\begin{matrix}
\frac{k(k+2)Q}{r^2}-(Q''+\frac{1}{r}Q')-\frac{2P}{r}+(2(2p-k)t+t^2 r^2)Q=0,\\\\
\frac{k(k+2)P}{r^2}-(P'+\frac{3}{r}P)'-\frac{2k(k+2)Q}{r^3}+(2(2p-k)t+t^2 r^2)P=0,
\end{matrix}\right.
\end{equation*}
The condition $\nu\lrcorner \hat\omega=0$ on $\mathbb{S}^1$ is equivalent to $P(1)=0$. By setting $P=r^{k-1}\hat P$ and $Q=r^k\hat Q$, with $\hat P(0)=k$ and $\hat Q(0)=1$, the system becomes
\begin{equation*}
\left\{
\begin{matrix}
(2k+2(2p-k)tr^2+t^2r^4)\hat Q-(2k+1)r\hat Q'-r^2\hat Q''-2\hat P=0\\\\
(2k+4+2(2p-k)tr^2+t^2r^4)\hat P-(2k+1)r\hat P'-r^2\hat P''-2k(k+2)\hat Q=0
\end{matrix}\right.
\end{equation*}
Setting $\hat Z:=\frac{\hat P}{k+2}+\hat Q$ and $\hat W:=\frac{\hat P}{k+2}-\frac{k}{k+2}\hat Q$, we see that $\hat Z$ and $\hat W$ satisfy the following differential equations: 
\begin{equation*}
\left\{
\begin{matrix}
\hat Z''+\frac{(2k+1)}{r}\hat Z'-(2(2p-k)t+t^2r^2)\hat Z=0\\\\
\hat W''+\frac{(2k+1)}{r}\hat W'-(\frac{4k+4}{r^2}+2(2p-k)t+t^2r^2)\hat W=0,
\end{matrix}\right.
\end{equation*}
with the initial conditions $\hat Z(0)=\frac{2k+2}{k+2}, \hat Z'(0)=0$ and $\hat W(0)=0, \hat W'(0)=0$. The solution is given by 
$$\hat Z=\frac{2(-1)^k(k+1)!e^{-tr^2/2}L^{-k}_{k-1/2-p}(tr^2)}{(k+2)t^kr^{2k}},$$ 
and 
$$\hat W=c\frac{e^{-tr^2/2}L^{-(k+2)}_{k+1/2-p}(tr^2)}{r^{2k+2}},$$ 
where $c$ is a constant. Hence, we get that 
$$\hat P=\frac{(-1)^kk k!e^{-tr^2/2}L^{-k}_{k-1/2-p}(tr^2)}{t^kr^{2k}}+\frac{c(k+2)^2e^{-tr^2/2}L^{-(k+2)}_{k+1/2-p}(tr^2)}{2(k+1)r^{2k+2}}$$
and 
$$\hat Q=\frac{(-1)^kk!e^{-tr^2/2}L^{-k}_{k-1/2-p}(tr^2)}{t^k r^{2k}}-c\frac{(k+2)e^{-tr^2/2}L^{-(k+2)}_{k+1/2-p}(tr^2)}{2(k+1)r^{2k+2}}.$$
Now, $\hat P(1)=0$ gives that $c=\frac{2(-1)^{k+1}k(k+1)!L^{-k}_{k-1/2-p}(t)}{t^k(k+2)^2L^{-(k+2)}_{k+1/2-p}(t)}$. Hence, we deduce the following expression for $Q$ as: 
$$Q=\frac{(k+2)e^{t(1-r^2)/2}L^{-k}_{k-1/2-p}(tr^2)}{2(k+1)r^{k}L^{-k}_{k-1/2-p}(t)}+\frac{k e^{t(1-r^2)/2}L^{-(k+2)}_{k+1/2-p}(tr^2)}{2(k+1)L^{-(k+2)}_{k+1/2-p}(t)r^{k+2}}.$$
To obtain the eigenvalues on exact $1$-forms of the form $\omega=d^{\mathbb{S}^3}f_{p,k}$, we compute 
$$T^{[1],t\eta}\omega=\partial r\lrcorner d(Q d^{\mathbb{S}^3}f_{p,k}+Pf_{p,k}dr)+i\partial r\lrcorner(\eta\wedge (Q d^{\mathbb{S}^3}f_{p,k}+Pf_{p,k}dr))=Q'(1)\omega$$
For $k\geq 1$, the formula for $Q'(1)$ is
\begin{eqnarray*}
Q'(1)&=&-\frac{k+2}{2(k+1)L^{-k}_{k-1/2-p}(t)}(2t L^{1-k}_{k-3/2-p}(t)+(k+t)L^{-k}_{k-1/2-p}(t))\\&&-\frac{k}{2(k+1)L^{-(k+2)}_{k+1/2-p}(t)}(2tL^{-(k+1)}_{k-1/2-p}(t)+(k+t+2)L^{-(k+2)}_{k+1/2-p}(t))\\
&=&-\frac{(k+2)t L^{1-k}_{k-3/2-p}(t)}{(k+1)L^{-k}_{k-1/2-p}(t)}-\frac{kt L^{-(k+1)}_{k-1/2-p}(t)}{(k+1)L^{-(k+2)}_{k+1/2-p}(t)}-\frac{k^2+kt+2k+t}{k+1}.
\end{eqnarray*}

Next, we consider co-exact eigenforms of the Hodge Laplacian and find their $\eta$-harmonic extensions. Let $\omega$ be a co-exact eigenform associated with the eigenvalue $(k+1)^2$ ($k\geq 1$) and let $\hat\omega=Q(r)\omega$. We have seen in the proof of Theorem \ref{thm:spherehodge} that $\omega$ can be taken to be of the form $*_{\mathbb{S}^3}(\iota^*d(f_{p,k}dz^j))$ or $*_{\mathbb{S}^3}(\iota^*d(f_{p,k}d\overline{z}^j))$, for $j=1,\ldots,4$. Since $\eta$ is Killing, $\eta(r)=0$ (that is, tangent to $\mathbb{S}^3$)  and the Hodge star operator on $\mathbb{S}^3$ commutes with the Lie derivative $\mathcal{L}_\eta$. So, with the help of \eqref{eq:liederivativesphere}, we deduce that $\mathcal{L}_\eta\hat\omega=i(2p-k+1)\hat\omega$ (or $\mathcal{L}_\eta\hat\omega=i(2p-k-1)\hat\omega$, if $\omega$ takes the form $*_{\mathbb{S}^3}(\iota^*d(f_{p,k}d\overline{z}^j))$). Hence, using \cite[Prop. 3.6]{EGHP:23} and \cite[Lem. 6]{RS:14} we compute the magnetic Laplacian of $\hat\omega$ (when $\omega=*_{\mathbb{S}^3}(\iota^*d(f_{p,k}dz^j))$) to get 
\begin{eqnarray*}
    \Delta^{t\eta}\hat\omega&=&\Delta^{\mathbb{R}^4}\hat\omega-2it\mathcal{L}_\eta\hat \omega+t^2 r^2\hat\omega\\
    &=&\frac{Q}{r^2}\Delta^{\mathbb{S}^3}\omega-(Q''+\frac{1}{r}Q')\omega+2t(2p-k+1)\hat\omega+t^2r^2\hat \omega\\
     &=&\left(\frac{Q}{r^2}(k+1)^2-(Q''+\frac{1}{r}Q')+(2t(2p-k+1)+t^2r^2)Q\right)\omega.
\end{eqnarray*}
Therefore, $\hat\omega$ is $\eta$-harmonic if and only if it satisfies the differential equation 
$$Q''+\frac{1}{r}Q'-\left(\frac{(k+1)^2}{r^2}+2t(2p-k+1)+t^2r^2\right)Q=0$$
with $Q(0)=0$ and $Q'(0)=0$. After rescaling, the solution of the above differential equation is given by 
$$Q(r)=e^{t(1-r^2)/2}\frac{L^{-(k+1)}_{k-p-\frac{1}{2}}(tr^2)}{r^{k+1}L^{-(k+1)}_{k-p-\frac{1}{2}}(t)}.$$
As before, one can show that the eigenvalues are given by 
$$Q'(1)=\frac{-2tL^{-k}_{k-3/2-p}(t)}{L^{-(k+1)}_{k-\frac{1}{2}-p}(t)}-(k+t+1).$$
One can also perform a similar computation when $\omega=*_{\mathbb{S}^3}(\iota^*d(f_{p,k}d\overline{z}^j))$ and show that the eigenvalues are equal to 
$\frac{-2tL^{-k}_{k-1/2-p}(t)}{L^{-(k+1)}_{k+\frac{1}{2}-p}(t)}-(k+t+1).$
\end{proof}

{\bf Acknowledgments:} We would like to thank Asma Hassannezhad for many stimulating discussions. We are also grateful to Colette Anné and Luigi Provenzano for helpful comments.

The majority of this research was carried out during Georges Habib's 3-month visit to Durham University in 2024, which was supported by the Atiyah Lebanon-UK Fellowship AF-2023-01 of the LMS (London Mathematical Society), a Visiting Fellowship from the ICMS (International Centre for Mathematical Sciences) and CAMS (Center for Advanced Mathematical Sciences) at the American University of Beirut.
Part of this project was carried out when the authors met in Bristol. The authors acknowledge the EPSRC grant EP/T030577/1 which made this visit possible.\\

\end{document}